\documentclass[a4paper]{article}

\usepackage[latin1]{inputenc} 
\usepackage{graphicx}
\usepackage[pdftex,bookmarks]{hyperref}
\usepackage{amsmath}    
\usepackage{amssymb}
\usepackage{amsmath,amsthm}
\usepackage{textcomp}
\usepackage{color}
\usepackage[all]{xy}
\usepackage{geometry}

\addtolength{\oddsidemargin}{-0.5in}
\addtolength{\evensidemargin}{-1in}
\addtolength{\textwidth}{.90in}

\addtolength{\topmargin}{-.775in}
\addtolength{\textheight}{1.4in}

\newtheorem{teo}{Theorem}[section]
\newtheorem{defi}{Definition}[section]
\newtheorem{lemma}{Lemma}[section]

\newtheorem{cor}{Corollary}[section]
\newtheorem{prop}{Proposition}[section]

\newtheorem{rem} {Remark}[section]

\newcommand{\nn}{\nonumber}

\DeclareMathOperator{\im}{im}

\DeclareMathOperator{\dvol}{dvol}

\DeclareMathOperator{\vol}{vol}
\DeclareMathOperator{\reg}{reg}
\DeclareMathOperator{\sing}{sing}

\DeclareMathOperator{\depth}{depth}

\title{\huge \bf  $L^p$-cohomology, heat semigroup and stratified spaces}  
\author{Francesco Bei  \bigskip \\
Dipartimento di matematica, Sapienza Universit\`a di Roma\\ E-mail addresses: bei@mat.uniroma.it\ \     francescobei27@gmail.com }
\date{}

\begin{document}

\maketitle
\begin{abstract}
Let $(M,g)$ be an incomplete Riemannian manifold of finite volume and let $2\leq p<\infty$. In the first part of this paper we prove that under certain assumptions the inclusion of the space of $L^p$-differential forms into that of $L^2$-differential forms gives rise to an injective/surjective map between the corresponding $L^p$ and $L^2$ cohomology groups. Then in the second part we provide various applications of  these results to the curvature and the intersection cohomology of compact Thom-Mather stratified pseudomanifolds and complex projective varieties with only isolated singularities.
\end{abstract}
\vspace{1 cm}

\noindent\textbf{Keywords}: $L^p$-cohomology, stratified pseudomanifolds, heat semigroup, Kato class.
\vspace{0.5 cm}

\noindent\textbf{Mathematics subject classification}:  58A12, 58A35, 58J35, 55N33.

\tableofcontents

\section*{Introduction}

Let $X$ be either a compact Thom-Mather stratified pseudomanifold or a singular complex projective variety and let us denote with $g$ either an iterated conic metric on $\reg(X)$ or  the K\"ahler metric  on $\reg(X)$ induced by the Fubini-Study metric, respectively.  Since the seminal papers of Cheeger \cite{JC} and Cheeger-Goresky-MacPherson \cite{CGM} several works have been devoted to show the existence of isomorphisms between the $L^p$ cohomology groups of $\reg(X)$, the regular part of $X$, and some intersection cohomology groups of $X$, establishing in this way a sort of $L^p$-de Rham theorem for an important class singular spaces. Just to mention a small sample of works we can recall here \cite{BGM}, \cite{BHS}, \cite{HS}, \cite{Masa}, \cite{TOh}, \cite{Val} and \cite{BYOU}.  As is well known $\reg(X)$ has finite volume with respect to $g$ and thus there exists a continuous inclusion of Banach spaces $i:L^q\Omega^k(\reg(X),g)\hookrightarrow L^p\Omega^k(\reg(X),g)$ whenever $p<q$. This inclusion is easily seen to induces a morphism of complexes between $(L^q\Omega^*(\reg(X),g),d_{*,\max,q})$ and $(L^p\Omega^*(\reg(X),g),d_{*,\max,p})$, where the former complex is the $L^q$ maximal de Rham complex and the latter is the $L^p$ maximal de Rham complex. Put it differently if $\omega\in \mathcal{D}(d_{*,\max,q})$, the domain of $d_{*,\max,q}:L^q\Omega^{*}(\reg(X),g)\rightarrow L^q\Omega^{*+1}(\reg(X),g)$, then $\omega\in \mathcal{D}(d_{*,\max,p})$ and $i(d_{*,\max,q}\omega)=d_{*,\max,p}i(\omega)$ in $L^p\Omega^{*+1}(\reg(X),g)$. However $\gamma:H^*_{q,\max}(\reg(X),g)\rightarrow H^*_{p,\max}(\reg(X),g)$, the map induced by $i:L^q\Omega^k(\reg(X),g)\hookrightarrow L^p\Omega^k(\reg(X),g)$ between the corresponding cohomology groups, is in general neither injective nor surjective. At this point we can summarize our main goal by saying that we found some answers to the following question:\\

\noindent Under what circumstances is the map $\gamma:H^*_{q,\max}(\reg(X),g)\rightarrow H^*_{p,\max}(\reg(X),g)$ injective or surjective or even better an isomorphism?\\

Let us go now into some more detail by describing the structure of this paper. The first section contains some background material on $L^p$-spaces, differential operators and $L^p$-cohomology. 
In particular  its last subsection is devoted to the definition of the Kato class of a Riemannian manifold, a notion that will play a central role in the rest of this paper. In order to state below our main results let us give a brief account on that: given a possibly incomplete Riemannian manifold $(M,g)$, let $L_k$ denote the curvature term appearing in  the Weitzenb\"ock formula $\Delta_k=\Delta^*\circ \Delta+L_k$. We say that the negative part of $L_k$ lies in the Kato class of $M$, $\ell_k^-\in \mathcal{K}(M)$, if 
\begin{equation}
\label{inkato}
\lim_{t\rightarrow 0^+}\sup_{x\in M}\int_0^t\int_Mp(s,x,y)\ell^-(y)\dvol_g(y)ds=0
\end{equation}
with  $\ell_k^-:M\rightarrow \mathbb{R}$  the function defined as 
$$\ell_k^-(x):=\max\{-l_k(x),0\}\quad\quad \text{and}\quad\quad l_k(x):=\inf_{v\in \Lambda^kT_x^*M,\ g(v,v)=1}g(L_{k,x}v,v).$$ Moreover in \eqref{inkato} $(s,x,y)\in (0,\infty)\times M\times M$, $\Delta_0^{\mathcal{F}}$ is the Friedrich extension of the Laplace-Beltrami operator and $p(s,x,y)$ denotes the smooth kernel of the heat operator $e^{-t\Delta_0^{\mathcal{F}}}:L^2(M,g)\rightarrow L^2(M,g)$. In particular, as we will see later, \eqref{inkato} holds true whenever $L_k\geq c$ for some $c\in \mathbb{R}$.\\ In the second section of this paper we address the above question in the general setting of incomplete Riemannian manifolds of finite volume. Although it looks a natural question it seems to our best knowledge that those of this paper are the first results in the literature that provide some sufficient conditions. Our first main result shows the existence of an injective map in the opposite direction of $\gamma$ between certain reduced maximal $L^p$  cohomology groups:
\begin{teo}
\label{tozzix}
Let $(M,g)$ be an open and incomplete Riemannian manifold of finite volume and dimension $m>2$. Assume in addition that
\begin{itemize}
\item We have a continuous inclusion 
$$W^{1,2}_0(M,g)\hookrightarrow L^{\frac{2m}{m-2}}(M,g)$$
\item There exists $k\in \{0,...,m\}$ such that $\ell_k^-\in \mathcal{K}(M)$;
\item  the operator 
$$\Delta_{k,\mathrm{abs}}:L^2\Omega^k(M,g)\rightarrow L^2\Omega^{k}(M,g)\quad\quad equals\quad\quad  \Delta_{k}^{\mathcal{F}}:L^2\Omega^k(M,g)\rightarrow L^2\Omega^{k}(M,g).$$
\end{itemize}
Then for any $2\leq p\leq \infty$ we have a continuous inclusion 
$$ \mathcal{H}^k_{2,\mathrm{abs}}(M,g)\hookrightarrow L^{p}\Omega^k(M,g).$$
Moreover the above inclusion induces  an injective linear map $$\beta_p:\overline{H}^k_{2,\max}(M,g)\rightarrow \overline{H}^k_{p,\max}(M,g)$$ for each $2\leq p\leq \infty$
\end{teo}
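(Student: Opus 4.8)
The plan is to realize each class of $\overline{H}^k_{2,\max}(M,g)$ by its unique $L^2$-harmonic representative, and then to show that such a representative is in fact bounded; since $\vol_g(M)<\infty$ this automatically places it in every $L^p$. Concretely, I would first recall from Section 1 the weak $L^2$ Hodge--Kodaira decomposition: orthogonal projection onto $\mathcal{H}^k_{2,\mathrm{abs}}(M,g)=\ker d_{k,\max,2}\cap\ker\delta_{k,\min,2}$ induces an isomorphism $\mathcal P\colon\overline{H}^k_{2,\max}(M,g)\xrightarrow{\ \sim\ }\mathcal{H}^k_{2,\mathrm{abs}}(M,g)$. Granting for the moment the continuous inclusion $\mathcal{H}^k_{2,\mathrm{abs}}(M,g)\hookrightarrow L^p\Omega^k(M,g)$, every $\omega\in\mathcal{H}^k_{2,\mathrm{abs}}(M,g)$ satisfies $d\omega=0$ as a current and $\omega\in L^p\Omega^k(M,g)$, hence $\omega\in\mathcal D(d_{k,\max,p})$ with $d_{k,\max,p}\omega=0$, so it determines a class in $\overline{H}^k_{p,\max}(M,g)$; I would then define $\beta_p$ to be the composition of $\mathcal P$ with the resulting linear map $\mathcal{H}^k_{2,\mathrm{abs}}(M,g)\to\overline{H}^k_{p,\max}(M,g)$. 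The statement then reduces to two points: the continuous inclusion, and the injectivity of $\beta_p$.

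For the inclusion, fix $\omega\in\mathcal{H}^k_{2,\mathrm{abs}}(M,g)$. By the third hypothesis $\omega\in\ker\Delta_k^{\mathcal F}$; since $\ell_k^-\in\mathcal K(M)$, the Weitzenb\"ock form $\eta\mapsto\|\nabla\eta\|_2^2+\int_M\langle\ell_k\eta,\eta\rangle\,\dvol_g$ is closed with form domain $W^{1,2}_0\Omega^k(M,g)$ and represents $\Delta_k^{\mathcal F}$, so $\omega\in W^{1,2}_0\Omega^k(M,g)$ with $\|\nabla\omega\|_2^2=-\int_M\langle\ell_k\omega,\omega\rangle\,\dvol_g\le\int_M\ell_k^-\,|\omega|^2\,\dvol_g<\infty$. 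Hence $u:=|\omega|\in W^{1,2}_0(M,g)$, and the Bochner identity together with Kato's inequality $|\nabla|\omega||\le|\nabla\omega|$ shows that $u\ge0$ is a weak subsolution of the Schr\"odinger operator $\Delta-\ell_k^-$, i.e.
\[
\int_M\langle\nabla u,\nabla\phi\rangle\,\dvol_g\le\int_M\ell_k^-\,u\,\phi\,\dvol_g\qquad\text{for every }0\le\phi\in W^{1,2}_0(M,g).
\]
The first hypothesis is equivalent, via Nash's inequality, to an ultracontractivity bound $\|e^{-t\Delta^{\mathcal F}}\|_{L^1\to L^\infty}\le C\,t^{-m/2}$ for the Friedrichs heat semigroup of the scalar Laplacian; because $\ell_k^-\in\mathcal K(M)$, the perturbed semigroup $e^{-t(\Delta-\ell_k^-)}$ is positivity preserving and again ultracontractive, so in particular it maps $L^2(M,g)$ continuously into $L^\infty(M,g)$ for each $t>0$. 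Since $u\in L^2(M,g)$, $u\ge0$ and $u$ is a subsolution, the standard domination $u\le e^{-t(\Delta-\ell_k^-)}u$ applies, whence $\|\omega\|_\infty=\|u\|_\infty\le C'\|u\|_2=C'\|\omega\|_2$; together with $\vol_g(M)<\infty$ this gives $\|\omega\|_{L^p}\le\vol_g(M)^{1/p}\|\omega\|_\infty\le C''\|\omega\|_2$ for all $2\le p\le\infty$, which is the first point.

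For injectivity, suppose $\omega\in\mathcal{H}^k_{2,\mathrm{abs}}(M,g)$ has zero image in $\overline{H}^k_{p,\max}(M,g)$, i.e. $\omega\in\overline{\im d_{k-1,\max,p}}$ inside $L^p\Omega^k(M,g)$. By the previous paragraph $\omega\in L^\infty\Omega^k(M,g)\subset L^{p'}\Omega^k(M,g)$, $p'$ the conjugate exponent; and from $\omega\in\ker\delta_{k,\min,2}$, approximating $\omega$ in $L^2$ by compactly supported smooth $k$-forms and using $\vol_g(M)<\infty$ (so $L^2\hookrightarrow L^{p'}$), one obtains $\omega\in\mathcal D(\delta_{k,\min,p'})$ with $\delta_{k,\min,p'}\omega=0$. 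Since $\delta_{k,\min,p'}=(d_{k-1,\max,p})^*$ under the $L^p$--$L^{p'}$ duality of the maximal and minimal $L^p$ de Rham complexes recalled in Section 1, the form $\omega$ annihilates $\im d_{k-1,\max,p}$, hence also its closure; pairing $\omega\in\overline{\im d_{k-1,\max,p}}$ against itself therefore yields $0=\int_M\langle\omega,\omega\rangle_g\,\dvol_g=\|\omega\|_{L^2}^2$, so $\omega=0$ and $\beta_p$ is injective.

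The hard part will be the second paragraph: upgrading ``$L^2$-harmonic'' to ``bounded'' on an \emph{incomplete} manifold. The delicate ingredients are the identification of the form domain of $\Delta_k^{\mathcal F}$ with $W^{1,2}_0\Omega^k(M,g)$ (this is where the Kato-class condition, via infinitesimal form-boundedness of $\ell_k^-$ against the Bochner energy, combines with hypothesis (3) to place $\ker\Delta_{k,\mathrm{abs}}$ inside the Friedrichs form domain and thereby control the behaviour at the incomplete boundary), the careful globalization of Kato's inequality and of the Bochner identity at the level of $W^{1,2}_0$, and the stability of ultracontractivity under Kato-class perturbations together with the semigroup domination of nonnegative subsolutions. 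By contrast, the reduction in the first paragraph and the duality argument for injectivity are essentially formal, using only finite volume and the duality between the maximal and minimal $L^p$ de Rham complexes.
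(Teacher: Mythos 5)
Your proposal is correct in outline and shares the paper's skeleton --- represent a class in $\overline{H}^k_{2,\max}(M,g)$ by its unique representative in $\mathcal{H}^k_{2,\mathrm{abs}}(M,g)=\ker(\Delta_k^{\mathcal F})$, show this representative lies in every $L^p$, and obtain injectivity of $\beta_p$ by pushing exactness back to $L^2$ using $\vol_g(M)<\infty$ --- but you carry out the crucial analytic step by a genuinely different mechanism. The paper simply observes that $\omega=e^{-t\Delta_k^{\mathcal F}}\omega$ for $\omega\in\ker(\Delta_k^{\mathcal F})$ and applies the $L^2\to L^q$ smoothing estimate \eqref{klplq} of Th. \ref{LpLqKato} (whose proof already encapsulates the Kato--Simon domination of Th. \ref{Kato-Simon} and the ultracontractivity coming from the Sobolev embedding), so the continuous inclusion $\ker(\Delta_k^{\mathcal F})\hookrightarrow L^q\Omega^k(M,g)$ for every $q\in[2,\infty]$ is essentially a two-line consequence (Prop. \ref{tozzi}), the hypothesis $\Delta_{k,\mathrm{abs}}=\Delta_k^{\mathcal F}$ entering only to identify $\mathcal{H}^k_{2,\mathrm{abs}}(M,g)$ with $\ker(\Delta_k^{\mathcal F})$. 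You instead re-derive, by hand and only at the endpoint $q=\infty$, this mapping property for harmonic forms: identification of the form domain of $\Delta_k^{\mathcal F}$ with $W^{1,2}_0\Omega^k$ via infinitesimal form-boundedness of $\ell_k^-$, a Kato/Bochner argument making $|\omega|$ a nonnegative $W^{1,2}_0$ subsolution of $\Delta-\ell_k^-$, and then domination by the ultracontractive perturbed scalar semigroup. These ingredients are all true and available (they are essentially the content of the results of G\"uneysu's book that the paper quotes), so your route is viable; but it is considerably heavier, and the delicate points you explicitly flag without carrying out are precisely what the fixed-point-plus-smoothing argument sidesteps, so in a finished write-up those steps would still have to be supplied. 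Two smaller remarks: for injectivity the paper argues more directly, taking $\phi_n$ with $d_{k-1,\max,p}\phi_n\to\omega$ in $L^p\Omega^k(M,g)$, using finite volume to get $d_{k-1,\max,2}\phi_n\to\omega$ in $L^2\Omega^k(M,g)$, and concluding from $\mathcal{H}^k_{2,\mathrm{abs}}(M,g)\cap\overline{\im(d_{k-1,\max,2})}=\{0\}$; your duality argument is fine for $2\le p<\infty$, but at $p=\infty$ the identity $(d_{k-1,\max,p})^*=d^t_{k-1,\min,p'}$ recalled in Section 1 is stated only for $1<p<\infty$, so that endpoint needs a small extra approximation (pair $\omega\in L^1\Omega^k(M,g)$ against the $L^\infty$-closure of the image, using \eqref{nerop} and a graph-norm approximation of $\omega$ in $\mathcal{D}(d^t_{k-1,\min,1})$) or, more simply, the paper's argument, which covers $p=\infty$ uniformly.
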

Some remarks to the above theorem are appropriate: 
in the third item above $\Delta_{k,\mathrm{abs}}$ and $\Delta_k^{\mathcal{F}}$ are two different self-adjoint extensions of the $k$-th Hodge Laplacian. The former is the operator induced by the $L^2$ maximal extension of the de Rham complex whereas the latter  is the Friedrichs extension. Moreover $\mathcal{H}^k_{\mathrm{abs}}(M,g)$ denotes the nullspace of $\Delta_{k,\mathrm{abs}}$ that, as we will recall later, is isomorphic to $\overline{H}^k_{2,\max}(M,g)$. Thus Th. \ref{tozzix} shows that under certain assumptions the reduced maximal $L^2$ cohomology of $(M,g)$ injects into the reduced maximal $L^p$ cohomology of $(M,g)$ for any $2\leq p\leq\infty$. In the second main result we provide conditions that assures the injectivty of $\gamma$:
\begin{teo}
\label{LpL2x}
Let $(M,g)$ be an open incomplete Riemannian manifold of dimension $m>2$. Assume that
\begin{itemize}
\item $\vol_g(M)<\infty$;
\item There exists $k\in \{0,...,m\}$ such that $\ell_{k-1}^-,\ell_k^-\in \mathcal{K}(M)$;
\item $\mathrm{Im}(d_{k-1,\max,2})$ is closed in $L^2\Omega^k(M,g)$; 
\item $\mathrm{Im}(d_{k-1,\max,q})$ is closed in $L^q\Omega^k(M,g)$ for some $2\leq q< \infty$;
\item We have a continuous inclusion $W^{1,2}_0(M,g)\hookrightarrow L^{\frac{2m}{m-2}}(M,g)$;
\item  The operator $$\Delta_{k,\mathrm{abs}}:L^2\Omega^k(M,g)\rightarrow L^2\Omega^k(M,g)\quad\quad \mathrm{equals}\quad\quad \Delta_k^{\mathcal{F}}:L^2\Omega^k(M,g)\rightarrow L^2\Omega^k(M,g)$$
and analogously the operator $$\Delta_{k-1,\mathrm{abs}}:L^2\Omega^{k-1}(M,g)\rightarrow L^2\Omega^{k-1}(M,g)\quad\quad \mathrm{equals}\quad\quad \Delta_{k-1}^{\mathcal{F}}:L^2\Omega^{k-1}(M,g)\rightarrow L^2\Omega^{k-1}(M,g).$$
\end{itemize}
Then the map $$\gamma: H^k_{q,\max}(M,g)\rightarrow H^k_2(M,g)$$ induced by the continuous inclusion $L^q\Omega^k(M,g)\hookrightarrow L^2\Omega^k(M,g)$  is injective. If moreover $\im(d_{k-1,\max,2})$ is closed in $L^2\Omega^k(M,g)$ then $$\gamma: H^k_{q,\max}(M,g)\rightarrow H^k_2(M,g)$$ is an isomorphism.
\end{teo}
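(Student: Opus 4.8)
The plan is to combine the $L^2$ Hodge theory of the complex $(L^2\Omega^*(M,g),d_{*,\max,2})$ with the smoothing of the heat semigroups $e^{-t\Delta_{k-1,\mathrm{abs}}}$ and $e^{-t\Delta_{k,\mathrm{abs}}}$, in the same spirit as Theorem~\ref{tozzix}. The first step is to record what the hypotheses give at degrees $k-1$ and $k$. The continuous Sobolev inclusion $W^{1,2}_0(M,g)\hookrightarrow L^{2m/(m-2)}(M,g)$ yields Nash-type ultracontractivity for the scalar Friedrichs heat semigroup; combined with $\ell_{k-1}^-,\ell_k^-\in\mathcal{K}(M)$ (a Kato-class domination of the form heat kernels by a multiple of the scalar one) and the identifications $\Delta_{k-1,\mathrm{abs}}=\Delta_{k-1}^{\mathcal{F}}$, $\Delta_{k,\mathrm{abs}}=\Delta_k^{\mathcal{F}}$, this shows that for $j\in\{k-1,k\}$ the semigroup $e^{-t\Delta_{j,\mathrm{abs}}}$ maps $L^2\Omega^j(M,g)$ continuously into $L^s\Omega^j(M,g)$ for every $s\in[2,\infty]$ and $t>0$, extends to a strongly continuous semigroup on $L^s\Omega^j(M,g)$ for $2\le s<\infty$, and commutes with the differential, $d_{j,\max}\circ e^{-t\Delta_{j,\mathrm{abs}}}=e^{-t\Delta_{j+1,\mathrm{abs}}}\circ d_{j,\max}$ on $\mathcal{D}(d_{j,\max,2})$. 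These are the facts underlying the proof of Theorem~\ref{tozzix}, and I would quote them from Section~1.

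For injectivity of $\gamma$, let $\omega\in L^q\Omega^k(M,g)$ with $d_{k,\max,q}\omega=0$ and assume $\gamma([\omega])=0$, i.e. $\omega=d_{k-1,\max,2}\eta$ for some $\eta\in L^2\Omega^{k-1}(M,g)$ (here $\omega\in L^2$ because $\vol_g(M)<\infty$). For $t>0$ set $\eta_t:=e^{-t\Delta_{k-1,\mathrm{abs}}}\eta$. By ultracontractivity $\eta_t\in L^q\Omega^{k-1}(M,g)$, and by the commutation property $d_{k-1,\max}\eta_t=e^{-t\Delta_{k,\mathrm{abs}}}\omega$, which lies in $L^q\Omega^k(M,g)$ since $\omega\in L^q$ and $e^{-t\Delta_{k,\mathrm{abs}}}$ is bounded on $L^q$; hence $\eta_t\in\mathcal{D}(d_{k-1,\max,q})$ and $d_{k-1,\max,q}\eta_t\in\im(d_{k-1,\max,q})$. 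Letting $t\to 0^+$, strong continuity on $L^q$ gives $e^{-t\Delta_{k,\mathrm{abs}}}\omega\to\omega$ in $L^q\Omega^k(M,g)$, so $\omega$ lies in the $L^q$-closure of $\im(d_{k-1,\max,q})$; since the latter is closed by hypothesis, $\omega\in\im(d_{k-1,\max,q})$, i.e. $[\omega]=0$ in $H^k_{q,\max}(M,g)$.

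If in addition $\im(d_{k-1,\max,2})$ is closed, surjectivity follows quickly from Theorem~\ref{tozzix}. Given $\theta\in L^2\Omega^k(M,g)$ with $d_{k,\max,2}\theta=0$, the weak Hodge--Kodaira decomposition of $(L^2\Omega^*(M,g),d_{*,\max,2})$ gives $\ker d_{k,\max,2}=\mathcal{H}^k_{2,\mathrm{abs}}(M,g)\oplus\overline{\im(d_{k-1,\max,2})}$, and closedness of $\im(d_{k-1,\max,2})$ lets us write $\theta=h+d_{k-1,\max,2}\zeta$ with $h\in\mathcal{H}^k_{2,\mathrm{abs}}(M,g)$, so $[\theta]=[h]$ in $H^k_2(M,g)$. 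By Theorem~\ref{tozzix} applied at degree $k$ with $p=q$ (legitimate since $2\le q<\infty$) we have $h\in L^q\Omega^k(M,g)$; since $d_{k,\max,2}h=0$ and $d_{*,\max,q}$ agrees with $d_{*,\max,2}$ on $L^2\cap L^q$ forms, we get $h\in\mathcal{D}(d_{k,\max,q})$ with $d_{k,\max,q}h=0$, hence $[h]\in H^k_{q,\max}(M,g)$ and $\gamma([h])=[h]=[\theta]$. Thus $\gamma$ is onto, and with the injectivity above it is an isomorphism.

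The only substantial point is the first paragraph: deriving from the Sobolev inclusion, the Kato-class conditions and $\Delta_{j,\mathrm{abs}}=\Delta_j^{\mathcal{F}}$ the ultracontractivity $L^2\to L^q$, the $C_0$-extension to $L^q$, and the $L^q$-level commutation with $d$ of the two heat semigroups; once these are available the cohomological argument is essentially bookkeeping. I would therefore isolate these heat-kernel statements as separate lemmas in Section~1 so that both Theorem~\ref{tozzix} and the present theorem can invoke them, and verify the harmless consistency fact that the maximal differentials $d_{*,\max,q}$ and $d_{*,\max,2}$ coincide on forms lying in $L^2\cap L^q$.
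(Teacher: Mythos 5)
Your overall scheme coincides with the paper's: regularize the $L^2$ primitive by $\eta_t=e^{-t\Delta_{k-1,\mathrm{abs}}}\eta$, use the commutation $d_{k-1}\,e^{-t\Delta_{k-1,\mathrm{abs}}}=e^{-t\Delta_{k,\mathrm{abs}}}\,d_{k-1}$ together with the mapping properties of Th. \ref{LpLqKato} to see that $\eta_t\in\mathcal{D}(d_{k-1,\max,q})$ with $d_{k-1,\max,q}\eta_t=e^{-t\Delta_{k,\mathrm{abs}}}\omega$, and conclude via closedness of $\im(d_{k-1,\max,q})$; the surjectivity half (harmonic representative $h\in\mathcal{H}^k_{2,\mathrm{abs}}(M,g)$, which lies in $L^q$ by Prop. \ref{tozzi}, hence defines a class mapped by $\gamma$ onto $[\theta]_2$) is exactly the argument of Cor. \ref{IsoL2Lq}. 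The one place where you genuinely deviate is also where there is a gap: you finish the injectivity argument by invoking strong continuity of the heat semigroup on $L^q\Omega^k(M,g)$ at $t=0$, i.e. $e^{-t\Delta_{k,\mathrm{abs}}}\omega\to\omega$ in $L^q$, and you say you would ``quote this from Section 1''. Section 1 does not contain it: Th. \ref{LpLqKato} only gives the uniform bound \eqref{klqlq} and the smoothing estimate \eqref{klplq}, and strong continuity (Prop. \ref{commu}, \cite{KSC}) is available only on $L^2$. Interpolating the $L^2$ convergence with the uniform $L^q$ bound yields convergence in $L^r$ for every $2\le r<q$, but \emph{not} at the endpoint $r=q$, which is precisely what your argument needs. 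So as written, the key analytic input of your proof is asserted rather than proved, and mis-attributed.

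The claim is in fact believable and repairable -- extend $e^{-t\Delta_{k}^{\mathcal{F}}}$ to $L^q$ by density using \eqref{klqlq}, check weak continuity at $t=0$ by approximating from $L^2\cap L^q$, and appeal to the theorem that a locally bounded, weakly continuous one-parameter semigroup on a Banach space is strongly continuous -- but this is a nontrivial extra lemma you would have to supply. The paper circumvents it entirely: it only uses that the family $d_{k-1,\max,q}\eta_{t_j}=e^{-t_j\Delta_k^{\mathcal{F}}}\omega$ is bounded in the reflexive space $L^q\Omega^k(M,g)$, extracts a weakly convergent subsequence, identifies the weak limit with $\omega$ through the strong $L^2$ convergence and \eqref{weak}, and then verifies $\omega\in\overline{\im(d_{k-1,\max,q})}$ by testing against $\ker(d^t_{k-1,\min,q'})$ via \eqref{vid}. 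Either adopt that weak-compactness route, or state and prove the $C_0$-property of the semigroup on $L^q\Omega^k$ as a separate lemma; without one of the two, the injectivity argument is incomplete.
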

At this point the reader may wonder why we assumed our manifolds to be incomplete. The reason lies on the fact that a complete manifold with finite volume that carries a Sobolev embedding is necessarily compact, see Rmk. \ref{car}. Since in the compact setting the above results are well known (at least for $1<p\leq q<\infty$) we focused directly on open and incomplete manifolds of finite volume. Moreover although at a first glance the above sets of assumptions could appear quite restrictive we will see in the third section that actually this is not the case, at least for the singular spaces we are interested in. 
Concerning the proofs of Th. \ref{tozzix} and Th. \ref{LpL2x}, a key role in both of them is played by the $L^p$-$L^q$ mapping properties of the heat operator $e^{-t\Delta_k^{\mathcal{F}}}$. In the last part of the second section we use again these mapping properties to investigate another question that arises whenever one deals with incomplete Riemannian manifolds: the $L^p$-Stokes theorem. Briefly we say that the $L^p$-Stokes theorem holds true at the level of $k$-forms on $(M,g)$ if $d_{k,p,\max}$ equals $d_{k,p,\min}$, see Def. \ref{StokesLp}. Regarding this question we proved what follows:
\begin{teo} 
\label{LpStokesx}
Let $(M,g)$ be an open and  incomplete Riemannian manifold of dimension $m>2$ such that 
\begin{itemize}
\item $\vol_g(M)<\infty$;
\item There is a continuous inclusion $W^{1,2}_0(M,g)\hookrightarrow L^{\frac{2m}{m-2}}(M,g)$;
\item $(M,g)$ is $q$-parabolic for some $2<q<\infty$;
\item There exists $k\in \{0,...,m\}$ such that $\ell_k^-,\ell_{k+1}^-\in \mathcal{K}(M)$;
\item $\Delta_{k,\mathrm{abs}}=\Delta_{k}^{\mathcal{F}}$ and $\Delta_{k+1,\mathrm{abs}}=\Delta_{k+1}^{\mathcal{F}}$ as unbounded and self-adjoint operators acting on $L^2\Omega^k(M,g)$ and $L^2\Omega^{k+1}(M,g)$, respectively. 
\end{itemize}
Then the $L^r$-Stokes theorem holds true on $L^r\Omega^{k}(M,g)$ for each $r\in [2,q]$. 
\end{teo}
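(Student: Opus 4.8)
The plan is to prove that $d_{k,r,\max}=d_{k,r,\min}$ for every $r\in[2,q]$. Since $\mathcal{D}(d_{k,r,\min})\subseteq\mathcal{D}(d_{k,r,\max})$ always holds, I would fix an arbitrary $\omega\in\mathcal{D}(d_{k,r,\max})$ and approximate $\omega$, together with $d\omega$, in the $L^r$-graph norm by compactly supported Lipschitz $k$-forms. Since $\vol_g(M)<\infty$ and $r\geq 2$ yield continuous inclusions $L^r\Omega^j(M,g)\hookrightarrow L^2\Omega^j(M,g)$ in every degree $j$, we have in particular $\omega\in\mathcal{D}(d_{k,2,\max})$ and $d\omega\in L^2\Omega^{k+1}(M,g)$. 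The approximation will be produced by composing two smoothing operations: a regularization of $\omega$ by the degree-$k$ heat semigroup, which under the hypotheses upgrades the integrability of \emph{both} $\omega$ and $d\omega$ to $L^\infty$, followed by a truncation of the support by means of the $q$-parabolic cutoff functions.

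For the regularization I would set $\omega_t:=e^{-t\Delta_k^{\mathcal{F}}}\omega=e^{-t\Delta_{k,\mathrm{abs}}}\omega$ for $t>0$, the last equality being the hypothesis at degree $k$. Because $\Delta_{k,\mathrm{abs}}$ and $\Delta_{k+1,\mathrm{abs}}$ are the Laplacians of one and the same Hilbert complex, namely the maximal $L^2$ de Rham complex of $(M,g)$, the closed operator $d_{k,\max}$ commutes with the Laplacian and hence intertwines the two heat semigroups; therefore $\omega_t\in\mathcal{D}(d_{k,2,\max})$ and, using also the hypothesis at degree $k+1$,
\[
d_{k,2,\max}\,\omega_t\;=\;e^{-t\Delta_{k+1,\mathrm{abs}}}\,d_{k,2,\max}\,\omega\;=\;e^{-t\Delta_{k+1}^{\mathcal{F}}}\,d\omega\,.
\]
Interior elliptic regularity makes $\omega_t$ and $d\omega_t$ smooth on $M$. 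Next I would invoke the $L^p$-$L^q$ mapping properties of $e^{-t\Delta_k^{\mathcal{F}}}$ and $e^{-t\Delta_{k+1}^{\mathcal{F}}}$ established earlier in this section, which rest precisely on the Sobolev embedding $W^{1,2}_0(M,g)\hookrightarrow L^{\frac{2m}{m-2}}(M,g)$ and on $\ell_k^-,\ell_{k+1}^-\in\mathcal{K}(M)$: for $t>0$ these operators send $L^2$ into $L^\infty$, so $\omega_t\in L^\infty\Omega^k(M,g)$ and $d\omega_t=e^{-t\Delta_{k+1}^{\mathcal{F}}}d\omega\in L^\infty\Omega^{k+1}(M,g)$ --- and hence, $\vol_g(M)$ being finite, they belong to $L^s$ in the respective degree for every $s\in[1,\infty]$ --- while, by a standard interpolation and density argument, $e^{-t\Delta_k^{\mathcal{F}}}$ and $e^{-t\Delta_{k+1}^{\mathcal{F}}}$ restrict to strongly continuous semigroups on $L^r\Omega^k(M,g)$ and on $L^r\Omega^{k+1}(M,g)$ for every $r\in[2,q]$. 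The latter gives $\omega_t\to\omega$ in $L^r\Omega^k(M,g)$ and $d\omega_t\to d\omega$ in $L^r\Omega^{k+1}(M,g)$ as $t\to 0^+$.

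For the cutoff step I would fix $t>0$ and, using $q$-parabolicity, choose compactly supported Lipschitz functions $\chi_n$ with $0\leq\chi_n\leq1$, $\chi_n\to 1$ almost everywhere, and $\|d\chi_n\|_{L^q(M,g)}\to 0$. Then $\chi_n\omega_t$ is a compactly supported Lipschitz $k$-form, hence lies in $\mathcal{D}(d_{k,r,\min})$, and one has the Leibniz identity $d(\chi_n\omega_t)=\chi_n\,d\omega_t+d\chi_n\wedge\omega_t$. Since $\omega_t,d\omega_t\in L^r$, $0\leq\chi_n\leq1$ and $\chi_n\to 1$ a.e., dominated convergence gives $\chi_n\omega_t\to\omega_t$ and $\chi_n\,d\omega_t\to d\omega_t$ in $L^r$, whereas the pointwise estimate for wedge products together with H\"older's inequality (this is where $r\leq q$ and $\vol_g(M)<\infty$ enter) yields a dimensional constant $C$ with
\[
\|d\chi_n\wedge\omega_t\|_{L^r(M,g)}\;\leq\;C\,\|\omega_t\|_{L^\infty(M,g)}\;\vol_g(M)^{\frac1r-\frac1q}\;\|d\chi_n\|_{L^q(M,g)}\,,
\]
which tends to $0$ as $n\to\infty$. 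Hence $d(\chi_n\omega_t)\to d\omega_t$ in $L^r$, and since $d_{k,r,\min}$ is closed we conclude $\omega_t\in\mathcal{D}(d_{k,r,\min})$ with $d_{k,r,\min}\omega_t=d\omega_t$. Finally, letting $t\to 0^+$ and using the $L^r$-convergences $\omega_t\to\omega$ and $d_{k,r,\min}\omega_t=d\omega_t\to d\omega$ together with closedness of $d_{k,r,\min}$ once more, we obtain $\omega\in\mathcal{D}(d_{k,r,\min})$; as $\omega$ was arbitrary this gives $d_{k,r,\max}=d_{k,r,\min}$, that is, the $L^r$-Stokes theorem on $L^r\Omega^k(M,g)$, for every $r\in[2,q]$.

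I expect the heart of the argument, and the place where essentially all the hypotheses are used, to be the regularization step, for two reasons. First, one must smooth $\omega$ and $d\omega$ into $L^\infty$ \emph{simultaneously}: it is the commutation of the degree-$k$ heat semigroup with $d$, combined with the $L^2\to L^\infty$ boundedness of $e^{-t\Delta_{k+1}^{\mathcal{F}}}$, that achieves this, and it is precisely here that the hypotheses $\ell_{k+1}^-\in\mathcal{K}(M)$ and $\Delta_{k+1,\mathrm{abs}}=\Delta_{k+1}^{\mathcal{F}}$ --- and not merely their degree-$k$ analogues --- become indispensable. Second, the final passage to the limit $t\to 0$ must be carried out in $L^r$ and not only in $L^2$, so one genuinely needs the restricted heat semigroups to be strongly continuous on $L^r$ for all $r\in[2,q]$; both of these properties are consequences of the $L^p$-$L^q$ mapping estimates established earlier. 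The $q$-parabolic truncation itself, by contrast, is routine once $\omega_t$ is known to be bounded and $r\leq q$.
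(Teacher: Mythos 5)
Your proposal is correct, and its core coincides with the paper's proof: regularize by $e^{-t\Delta_{k,\mathrm{abs}}}$, use the intertwining $d_{k,\max,2}\,e^{-t\Delta_{k,\mathrm{abs}}}=e^{-t\Delta_{k+1,\mathrm{abs}}}\,d_{k,\max,2}$ together with the $L^2\rightarrow L^\infty$ bound \eqref{klplq} so that both $\omega_t$ and $d\omega_t$ become bounded, and then use the $q$-parabolic cutoffs to place $\omega_t$ in the minimal domain (your explicit cutoff computation is exactly what the paper outsources to \cite[Prop. 2.5]{Fra}). Where you genuinely diverge is the limit $t\rightarrow 0^+$: the paper never proves strong convergence in $L^q$; its Lemma \ref{lemma} only extracts \emph{weak} $L^q$-limits (uniform bounds from \eqref{klqlq}, reflexivity, identification of the limit via the strong $L^2$-convergence) and then concludes with the duality characterization \eqref{naghen} of the minimal extension, testing against $\mathcal{D}(d^t_{k,\max,q'})$. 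You instead invoke strong continuity of the heat semigroup on $L^r$ for every $r\in[2,q]$ and finish by closedness of $d_{k,\min,r}$. That claim is true, but be aware that at the endpoint $r=q$ it is not a one-line interpolation between $L^2$ and $L^q$ applied to $\omega$ itself: you should first approximate $\omega$ and $d\omega$ in $L^q$ by forms in $C^\infty_c$, control the error terms uniformly in $t$ by \eqref{klqlq}, and interpolate the smooth piece between $L^2$ and some $L^s$ with $s>q$ --- which is available precisely because \eqref{klqlq} holds for every exponent, not only up to $q$. Alternatively, you could drop strong continuity altogether: weak convergence of the pairs $(\omega_t, d_{k,\min,r}\omega_t)$ already suffices, since the graph of the closed operator $d_{k,\min,r}$ is a closed subspace and hence weakly closed, and this brings your finishing step essentially back to the paper's. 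The trade-off is that your route is more self-contained (no appeal to \cite{Fra} and no duality pairing), while the paper's weak-convergence route avoids any discussion of $C_0$-continuity of the semigroup outside $L^2$.
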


Finally the third and last section collects various applications of the above theorems to compact Thom-Mather stratified pseudomanifolds   and complex projective varieties with isolated singularities. Here we endow the former spaces with an iterated conic metric while the latters come equipped with the Fubini-Study metric. As we will see there are many examples of such singular spaces where all the assumptions of the above theorems except that concerning the negative part of the tensor  $L_k$ are fulfilled.  Since in this framework the $L^p$-cohomology is isomorphic to a certain kind of intersection cohomology the next results show how requiring $\ell_k^-$ in the Kato class of $\reg(X)$ (and so in particular a possibly negative lower bound on $L_k$) provides already strong topological consequences. In order to state these applications we recall briefly that $\underline{m}$ stands for the lower middle perversity defined as $\underline{m}(k):=[(k-2)/2]$, $t$ is the top perversity defined as $t(k):=k-2$ and finally, for any fixed $r\geq 2$, $q_r(k)$ is perversity given by $q_r(k):=t(k)-p_r(k)$ with $p_r(k):=[[k/r]]$ and  $[[\bullet]]$ denoting the biggest integer number strictly smaller than $\bullet$. We have now all the ingredient to state the next

\begin{teo}
\label{09/07x}
Let $X$ be a compact and oriented smoothly  Thom-Mather-Witt stratified pseudomanifolds of dimension $m>2$. Let $g$ be  an iterated conic metric on $\reg(X)$ such that $d+d^t:L^2\Omega^{\bullet}(\reg(X),g)\rightarrow L^2\Omega^{\bullet}(\reg(X),g)$ is essentially self-adjoint.  We have the following properties:
\begin{enumerate}
\item If $\ell_k^-\in \mathcal{K}(\reg(X))$ for some $k\in \{0,...,m\}$ then Th. \ref{tozzix} holds true for $(\reg(X),g)$ and $k$. As a consequence $$\dim(I^{\underline{m}}H^k(X,\mathbb{R}))\leq \dim(I^{q_r}H^k(X,\mathbb{R}))$$ for any $2\leq r<\infty$. If $X$ is also normal then $$\dim(I^{\underline{m}}H^k(X,\mathbb{R}))\leq \dim(H^k(X,\mathbb{R})).$$
\item Let $2\leq r< \infty$ be arbitrarily fixed. If  $\ell_{k-1}^-,\ell_k^-\in \mathcal{K}(\reg(X))$ for some $k\in \{0,...,m\}$ then the map $$\gamma:H^k_{r,\max}(\reg(X),g)\rightarrow H^k_{2}(\reg(X),g)$$ is injective, see Th. \ref{LpL2x}. Consequently $$\dim(I^{\underline{m}}H^k(X,\mathbb{R}))= \dim(I^{q_r}H^k(X,\mathbb{R}))$$ for any $2\leq r<\infty$. If $X$ is also normal then $$\dim(I^{\underline{m}}H^k(X,\mathbb{R}))= \dim(H^k(X,\mathbb{R})).$$
\item Assume that there exists $r\in (2,\infty)$ such that each singular stratum $Y\subset X$ satisfies $\mathrm{cod}(Y)\geq r$ if $\mathrm{depth}(Y)=1$ whereas $\mathrm{cod}(Y)> r$ if $\mathrm{depth}(Y)>1$. If $\ell_k^-,\ell_{k+1}^-\in \mathcal{K}(\reg(X))$ for some $k\in \{0,...,m\}$ then the $L^z$-Stokes theorem holds true on $L^z\Omega^k(M,g)$ for any $z\in [2,r]$. 
\end{enumerate}
\end{teo}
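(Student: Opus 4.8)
The plan is, for each of the three items, to verify that $(\reg(X),g)$ together with the given degree $k$ satisfies the hypotheses of the relevant abstract theorem of Section~2 (Th.~\ref{tozzix}, Th.~\ref{LpL2x}, Th.~\ref{LpStokesx}), and then to translate the resulting analytic statements into statements about intersection cohomology via the known $L^p$-de Rham isomorphisms on Thom--Mather--Witt pseudomanifolds. Several hypotheses are common to all three parts and are disposed of once and for all: the finiteness $\vol_g(\reg(X))<\infty$ is classical for iterated conic metrics, $m>2$ is assumed, and the Sobolev embedding $W^{1,2}_0(\reg(X),g)\hookrightarrow L^{\frac{2m}{m-2}}(\reg(X),g)$ is known to hold for iterated conic metrics. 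The decisive point is that the standing hypothesis ``$d+d^t$ essentially self-adjoint on $L^2\Omega^\bullet(\reg(X),g)$'' forces $\Delta_{j,\mathrm{abs}}=\Delta_j^{\mathcal F}$ for \emph{every} degree $j$: essential self-adjointness of $d+d^t$ means this operator has a single closed extension, hence $d_{j,\max}=d_{j,\min}$ in each degree, whence both $\Delta_{j,\mathrm{abs}}$ and $\Delta_j^{\mathcal F}$ coincide with the square of the unique self-adjoint closure of $d+d^t$ restricted to degree $j$. This simultaneously settles the third bullet of Th.~\ref{tozzix}, the sixth bullet of Th.~\ref{LpL2x} and the fifth bullet of Th.~\ref{LpStokesx} for all degrees in play. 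Finally, on a compact Witt pseudomanifold with conic metric the Hodge--de Rham operator has discrete spectrum (see \cite{JC}, \cite{CGM} and their extensions), so all the images $\im(d_{j,\max,2})$ are closed, the reduced and unreduced $L^2$-cohomology agree, and they are isomorphic to $I^{\underline m}H^*(X;\mathbb R)$; likewise the $L^r$-de Rham theorem (see \cite{Val}, \cite{BGM}, \cite{BHS}, \cite{TOh}) gives $H^k_{r,\max}(\reg(X),g)\cong I^{q_r}H^k(X;\mathbb R)$ for $2\le r<\infty$, which is finite-dimensional since $X$ is compact.

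For item~(1), the only hypothesis of Th.~\ref{tozzix} not yet checked is $\ell_k^-\in\mathcal K(\reg(X))$, which is exactly what we assume; hence Th.~\ref{tozzix} applies and yields the injection $\beta_p\colon\overline H^k_{2,\max}(\reg(X),g)\hookrightarrow\overline H^k_{p,\max}(\reg(X),g)$ for every $2\le p\le\infty$. Using $\overline H^k_{2,\max}(\reg(X),g)\cong I^{\underline m}H^k(X;\mathbb R)$ together with $\dim\overline H^k_{r,\max}(\reg(X),g)\le\dim H^k_{r,\max}(\reg(X),g)=\dim I^{q_r}H^k(X;\mathbb R)$, the map $\beta_r$ gives $\dim I^{\underline m}H^k(X;\mathbb R)\le\dim I^{q_r}H^k(X;\mathbb R)$. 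When $X$ is normal, for $r$ large enough (possible since $X$ is compact, so its links have bounded dimension) the perversity $q_r$ lies in the range where $I^{q_r}H^k(X;\mathbb R)=H^k(X;\mathbb R)$, and combining with $\beta_r$ gives the second inequality; alternatively one may take $p=\infty$ and use $\overline H^k_{\infty,\max}(\reg(X),g)\cong H^k(X;\mathbb R)$ for normal $X$ (see \cite{BYOU}). For item~(2), besides $\ell_{k-1}^-,\ell_k^-\in\mathcal K(\reg(X))$ (assumed), Th.~\ref{LpL2x} requires that $\im(d_{k-1,\max,2})$ be closed in $L^2$ (noted above) and that $\im(d_{k-1,\max,q})$ be closed in $L^q$ for some finite $q\ge 2$; for the latter I use that on these spaces \emph{both} the reduced and the unreduced $L^q$-cohomology are isomorphic to the same finite-dimensional intersection cohomology group, which forces the image to be closed. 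Th.~\ref{LpL2x} then gives directly that $\gamma\colon H^k_{q,\max}(\reg(X),g)\to H^k_2(\reg(X),g)$ is an isomorphism, and, translating through the de Rham isomorphisms, $\dim I^{q_r}H^k(X;\mathbb R)=\dim I^{\underline m}H^k(X;\mathbb R)$ for every finite $r\ge 2$; taking $r$ large with $X$ normal yields $\dim I^{\underline m}H^k(X;\mathbb R)=\dim H^k(X;\mathbb R)$.

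For item~(3) the new input is $q$-parabolicity, and this is where the codimension hypothesis enters: if $\cod(Y)\ge r$ whenever $\depth(Y)=1$ and $\cod(Y)>r$ whenever $\depth(Y)>1$ for every singular stratum $Y$, one constructs, stratum by stratum, the cut-off functions witnessing $r$-parabolicity of $(\reg(X),g)$ --- the admissible range of the exponent at a stratum being governed by its codimension, the strict inequality being needed to absorb the iterated (deeper) cones, the non-strict one sufficing at a depth-one stratum. With this, Th.~\ref{LpStokesx} applied with $q=r$ (its remaining hypotheses having been checked above) gives the $L^z$-Stokes theorem on $L^z\Omega^k(\reg(X),g)$ for every $z\in[2,r]$.

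The hard part will be the three inputs that are not part of the abstract machinery and that really carry the geometry of the section: (a) the Sobolev embedding $W^{1,2}_0(\reg(X),g)\hookrightarrow L^{\frac{2m}{m-2}}(\reg(X),g)$ for iterated conic metrics in the presence of deep strata; (b) the closedness of $\im(d_{k-1,\max,q})$ in $L^q$, equivalently the identification of the \emph{unreduced} $L^q$-cohomology (and not merely its reduced quotient) with intersection cohomology; and (c) the precise $r$-parabolicity estimate derived from the codimension bounds, in particular the reason why the endpoint $z=r$ is admissible for depth-one strata but not for deeper ones. Each of these should follow from, or be a mild refinement of, results already available in the literature on $L^p$-cohomology and analysis on stratified pseudomanifolds quoted in the Introduction, but they are where the genuine work of this section lies.
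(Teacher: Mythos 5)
Your proposal is correct and follows essentially the same route as the paper: essential self-adjointness of $d+d^t$ forces $\Delta_{j,\mathrm{abs}}=\Delta_j^{\mathcal{F}}$ in every degree (Prop. \ref{oneto}), the $L^r$-de Rham/intersection-cohomology isomorphisms give finite dimensionality and hence closed ranges, finite volume, the Sobolev embedding and the $r$-parabolicity under the codimension bounds are the known facts for iterated conic metrics, and then Th. \ref{tozzix}, Th. \ref{LpL2x}, Th. \ref{LpStokesx} together with Poincar\'e duality and normality yield the three items. The three points you flag as remaining ``hard parts'' are exactly the external inputs the paper cites (\cite{ACM} for the Sobolev embedding, \cite{BYOU} for the identification of the unreduced $L^q$-cohomology and hence closedness of the ranges, \cite{BeGu} for the parabolicity), so no additional work is needed beyond those citations.
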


Concerning complex projective varieties with only isolated singularities whose regular part is endowed with the Fubini-Study metric we have the following

\begin{teo}
\label{projvarx}
Let $V$ be a complex projective variety of complex dimension $v>1$ with $\dim(\sing(V))=0$ and let  $h$ be the K\"ahler metric on $\reg(V)$ induced by the Fubini-Study metric. Assume that $\ell_k^-\in \mathcal{K}(\reg(V))$ for some $k\in \{0,...,2v\}$, $k\notin \{v\pm1, v\}$. Then
  $$\dim(I^{\underline{m}}H^k(V,\mathbb{R}))\leq \dim(I^{t}H^k(V,\mathbb{R})).$$  If $V$ is normal then $$\dim(I^{\underline{m}}H^k(V,\mathbb{R}))\leq \dim(H^k(V,\mathbb{R})).$$ If  $1<k<v-1$ then $$ \dim(H_{2v-k}(V,\mathbb{R}))\leq \dim(H_{2v-k}(\reg(V),\mathbb{R}))$$ whereas if $k=1$ we have $$\dim(H_{2v-1}(V),\mathbb{R})=\dim(\im(H_{2v-1}(\reg(V),\mathbb{R})\rightarrow H_{2v-1}(V,\mathbb{R}))).$$
Finally if we assume  that $L_k\geq 0$ and  $L_{k,p}> 0$ for some $p\in \reg(V)$, then $$I^{\underline{m}}H^k(V,\mathbb{R})=\{0\}.$$
\end{teo}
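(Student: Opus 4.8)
The plan is to derive Theorem \ref{projvarx} from Theorem \ref{tozzix} applied to the open incomplete manifold $(\reg(V),h)$, combined with the identification, known in this setting, of the $L^p$-cohomology of $(\reg(V),h)$ with the intersection cohomology of $V$, and with a Bochner argument for the last assertion. The first step is therefore to check the three hypotheses of Theorem \ref{tozzix} for the pair $(\reg(V),h)$ and the given $k$. Finiteness of the volume is classical, and the real dimension equals $m=2v>2$ since $v>1$; the Sobolev embedding $W^{1,2}_0(\reg(V),h)\hookrightarrow L^{\frac{2m}{m-2}}(\reg(V),h)=L^{\frac{2v}{v-1}}(\reg(V),h)$ is known for the Fubini--Study metric on the regular part of a projective variety; and $\ell_k^-\in\mathcal{K}(\reg(V))$ is part of the hypotheses. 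The genuinely delicate point is the equality $\Delta_{k,\mathrm{abs}}=\Delta_k^{\mathcal{F}}$ on $L^2\Omega^k(\reg(V),h)$, and I expect this to be the main obstacle: I would isolate it as a separate lemma. It holds precisely because $k\notin\{v-1,v,v+1\}$; indeed, for the Fubini--Study metric on the regular part of a variety with isolated singularities the Gauss--Bonnet operator $d+d^t$ is essentially self-adjoint on $j$-forms whenever $j$ avoids the middle degree $v$, as one sees from the structure of the metric near the finitely many singular points and the associated $L^2$-Hodge theory; hence, when $k$ and $k\pm 1$ are all different from $v$, the Hodge Laplacian on $k$-forms has a unique self-adjoint extension, and in particular $\Delta_{k,\mathrm{abs}}=\Delta_k^{\mathcal{F}}$.

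With the hypotheses verified, Theorem \ref{tozzix} provides, for every $2\le p\le\infty$, a continuous inclusion $\mathcal{H}^k_{2,\mathrm{abs}}(\reg(V),h)\hookrightarrow L^p\Omega^k(\reg(V),h)$ and an injective linear map $\beta_p\colon\overline{H}^k_{2,\max}(\reg(V),h)\to\overline{H}^k_{p,\max}(\reg(V),h)$. Now one uses the $L^p$-de Rham theorem in this setting, which identifies $\overline{H}^k_{p,\max}(\reg(V),h)$ with an intersection cohomology group $I^{\bar q}H^k(V,\mathbb{R})$ for a perversity $\bar q$ depending on $p$; in particular $\overline{H}^k_{2,\max}(\reg(V),h)\cong I^{\underline m}H^k(V,\mathbb{R})$ and $\overline{H}^k_{\infty,\max}(\reg(V),h)\cong I^{t}H^k(V,\mathbb{R})$, all of these being finite-dimensional, so that reduced and unreduced cohomology coincide. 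Injectivity of $\beta_\infty$ then yields $\dim I^{\underline m}H^k(V,\mathbb{R})\le\dim I^{t}H^k(V,\mathbb{R})$. When $V$ is normal the intersection cohomology group appearing on the right is isomorphic to the ordinary cohomology, $I^{t}H^k(V,\mathbb{R})\cong H^k(V,\mathbb{R})$, which gives the second inequality.

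For the homological statements I would transport the cohomological inequality through Poincar\'e--Lefschetz duality. Combining Poincar\'e duality for the open oriented $2v$-manifold $\reg(V)$, Poincar\'e duality for the intersection cohomology of $V$, and the fact that for an isolated singularity, of real codimension $2v$, the lower- and upper-middle perversities coincide and both take the value $v-1$, one rewrites the inequality as a statement about $H_{2v-k}(V,\mathbb{R})$, $H_{2v-k}(\reg(V),\mathbb{R})$ and the map between them induced by the inclusion $\reg(V)\hookrightarrow V$. One then analyzes this map through the long exact sequence of the pair $(V,\reg(V))$, whose relative groups are, by excision, $\bigoplus_i\tilde{H}_{*-1}(L_i)$, where the $L_i$ are the links of the singular points, closed manifolds of real dimension $2v-1$. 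In the range $1<k<v-1$ this forces the inclusion-induced map $H_{2v-k}(\reg(V),\mathbb{R})\to H_{2v-k}(V,\mathbb{R})$ to be onto, whence $\dim H_{2v-k}(V,\mathbb{R})\le\dim H_{2v-k}(\reg(V),\mathbb{R})$; in the borderline degree $k=1$ the top cohomology of the links obstructs the argument, and one concludes only that $H_{2v-1}(V,\mathbb{R})$ equals the image of $H_{2v-1}(\reg(V),\mathbb{R})$ in it.

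For the final vanishing, assume $L_k\ge 0$ and $L_{k,p}>0$ at some $p\in\reg(V)$. Then $\ell_k^-\equiv 0$ belongs trivially to $\mathcal{K}(\reg(V))$, so the hypotheses above are \emph{a fortiori} satisfied and $I^{\underline m}H^k(V,\mathbb{R})\cong\mathcal{H}^k_{2,\mathrm{abs}}(\reg(V),h)=\ker\Delta_k^{\mathcal{F}}$. For $\omega$ in this kernel the Weitzenb\"ock--Bochner identity, valid on the domain of $\Delta_k^{\mathcal{F}}$ by the Kato condition, gives $0=\langle\Delta_k^{\mathcal{F}}\omega,\omega\rangle=\|\nabla\omega\|^2+\langle L_k\omega,\omega\rangle$; since $L_k\ge 0$ both summands vanish, so $\omega$ is parallel and $L_k\omega\equiv 0$ pointwise. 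Evaluating at $p$, positive-definiteness of $L_{k,p}$ forces $\omega_p=0$, and a parallel form that vanishes at a point of the connected manifold $\reg(V)$ vanishes identically. Hence $\mathcal{H}^k_{2,\mathrm{abs}}(\reg(V),h)=\{0\}$ and $I^{\underline m}H^k(V,\mathbb{R})=\{0\}$.
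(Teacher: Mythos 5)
Your overall strategy is the paper's: verify the hypotheses of Th. \ref{tozzix}/\ref{tozziti} for $(\reg(V),h)$ (finite volume, the Sobolev embedding of Li--Tian, and $\Delta_{k,\mathrm{abs}}=\Delta_k^{\mathcal{F}}$ away from the degrees $v,v\pm1$, which is exactly the Grieser--Lesch theorem you gesture at), then translate into intersection cohomology and finish with a Bochner argument. But there is a genuine gap at the crucial translation step. You invoke an ``$L^p$-de Rham theorem'' identifying $\overline{H}^k_{p,\max}(\reg(V),h)$ with $I^{\bar q}H^k(V,\mathbb{R})$, and in particular $\overline{H}^k_{\infty,\max}(\reg(V),h)\cong I^tH^k(V,\mathbb{R})$. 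No such result is available for the Fubini--Study metric: Youssin's theorem (Th. \ref{LpInt}) is for iterated conic metrics, and the induced K\"ahler metric on $\reg(V)$ is not quasi-isometric to such a metric in general; the only identifications at hand are Ohsawa's at $p=2$ and Valette's theorem, which computes $I^tH^k(V,\mathbb{R})$ as $H^k_{s,\infty}(\reg(V),h)$, the cohomology of the complex of \emph{smooth} forms bounded together with their differentials, not as the weak maximal group $\overline{H}^k_{\infty,\max}$. Your argument uses only the map $\beta_\infty$ of Th. \ref{tozzix}, which lands in $\overline{H}^k_{\infty,\max}$, so the comparison with $I^tH^k$ does not follow; this is precisely why the paper proves the extra injectivity of $\alpha_\infty\colon \overline{H}^k_{2,\max}\to \im\bigl(H^k_{s,\infty}(\reg(V),h)\to\overline{H}^k_{\infty,\max}(\reg(V),h)\bigr)$ in Th. \ref{tozziti} (using that the harmonic representative is smooth and bounded) and then bounds the dimension of that image by $\dim H^k_{s,\infty}\cong\dim I^tH^k$. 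Your circularity is also visible in the claim that reduced and unreduced $L^\infty$ cohomology coincide ``by finite-dimensionality'', which presupposes the identification you have not established.

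Two further steps are asserted rather than proved. For the homology statements, the long exact sequence of $(V,\reg(V))$ does not ``force'' surjectivity of $H_{2v-k}(\reg(V),\mathbb{R})\to H_{2v-k}(V,\mathbb{R})$ for $1<k<v-1$: the relative groups $\tilde H_{2v-k-1}(L_i)$ need not vanish for general isolated singularities (only, e.g., for complete intersections), and surjectivity is neither available nor needed. What is needed is the cone formula \eqref{banagl}, giving $I^{\overline{m}}H_{2v-k}(V,\mathbb{R})\cong H_{2v-k}(V,\mathbb{R})$ for $k<v$ and $I^0H_{2v-k}(V,\mathbb{R})\cong H_{2v-k}(\reg(V),\mathbb{R})$ for $k>1$ (and $\cong\im(H_{2v-1}(\reg V)\to H_{2v-1}(V))$ for $k=1$), after which the inequality of dimensions is immediate and the $k=1$ equality follows from the squeeze $\dim H_{2v-1}(V)\le\dim\im(\cdots)\le\dim H_{2v-1}(V)$. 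Finally, in the Bochner step the identity $0=\|\nabla\omega\|^2+\langle L_k\omega,\omega\rangle$ for $\omega\in\ker\Delta_k^{\mathcal{F}}$ is not ``valid by the Kato condition'': on an incomplete manifold the integration by parts is exactly the delicate point, and the paper justifies it in Prop. \ref{vanishing} via parabolicity cutoffs, the boundedness of $\omega$ (from the heat-kernel estimates) and $\omega\in\mathcal{D}(\nabla_{\min})$; alternatively one can argue through the Friedrichs form domain, approximating $\omega$ by compactly supported forms and using $L_k\ge0$, but some such argument must be supplied.
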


Besides the aforementioned  results the third section contains other applications. In particular we analyze the case $k=1$ that is, when the curvature term in the above theorems is the Ricci curvature, and we study in detail the case of isolated conical singularities. For the sake of brevity we refer directly to the third section for precise statements.

\vspace{1 cm}

\noindent\textbf{Acknowledgments.} It is a pleasure to thank Simone Diverio, Batu G\"uneysu, Kieran O'Grady and Paolo Piazza for helpful comments.
 
\section{Background material}
\subsection{$L^p$-cohomology and differential operators}
The aim of this section is to  recall briefly some basic notions about $L^p$-spaces, differential operators and $L^p$-cohomology.  We refer for instance  to  \cite{GoTro}, \cite{GoTr}, \cite{Batu}, \cite{PiRiSe} and \cite{BYOU}. Let $(M,g)$  be an open and possibly incomplete Riemannian manifold of dimension $m$. Throughout the text $M$ will always assumed to be connected. Let $E$ be a vector bundle over $M$ of rank $k$ and let $\rho$ be a metric on $E$, Hermitian if $E$ is a complex vector bundle, Riemannian if $E$ is a real vector bundle. Let $\dvol_g$ be the one-density associated to $g$.  We consider $M$ endowed with the canonical  Riemannian measure, see e.g.  \cite[Th. 3.11]{GYA}.  A section $s$ of $E$ is said {\em measurable} if, for any trivialization $(U,\phi)$ of $E$, $\phi(s|_U)$ is given by a $k$-tuple of measurable functions.  Given a measurable section $s$ let $|s|_{\rho}$ be defined as $|s|_{\rho}:= (\rho(s,s))^{1/2}$. Then for every $p$ with $1\leq p< \infty$ we define $L^{p}(M,E,g,\rho)$ as the  space  of equivalence classes of measurable sections $s$ such that    $$\|s\|_{L^{p}(M,E,g,\rho)}:=\left(\int_{M}|s|_{\rho}^p\dvol_g\right)^{1/p}<\infty.$$
For each $p\in [1, \infty)$ we have a Banach space,  for each $p\in (1, \infty)$  we get a reflexive Banach space and  in the case $p=2$ we have a Hilbert space whose inner product is $$\langle s, t \rangle_{L^2(M,E,g,\rho)}:= \int_{M}\rho(s,t)\dvol_g.$$ Moreover $C^{\infty}_c(M,E)$,  the space of smooth sections with compact support,  is dense in $L^p(M,E,g,\rho)$ for $p\in [1,\infty).$  Finally $L^{\infty}(M,E,\rho)$ is defined as the space of equivalence classes of measurable sections whose {\em essential\ supp} is bounded, that is the space of measurable sections $s$ such that $|s|_{\rho}$ is bounded almost everywhere. Also in this case we get a Banach space.  
Given any $p\in [1,\infty]$, $s \in L^p(M,E,g,\rho)$  and $v\in L^{p'}(M,E,g,\rho)$, with  $p'=\frac{p}{p-1}$ if $p>1$ whereas $p'=\infty$ if $p=1$, we have $$\rho(s,v)\in L^1(M,g)\quad \mathrm{and}\quad  \|\rho(s,v)\|_{L^1(M,g)}\leq \|s\|_{L^p(M,E,g,\rho)}\|v\|_{L^{p'}(M,E,g,\rho)}.$$ Moreover if $p\in (1,\infty)$ the map $T:L^{p'}(M,E,g,\rho)\rightarrow (L^p(M,E,g,\rho))^*$ given by $$v\mapsto Tv,\quad\quad Tv(s)=\int_M\rho(v,s)\dvol_g$$ is an isometric isomorphism between $L^{p'}(M,E,g,\rho)$ and  $(L^p(M,E,g,\rho))^*$. Consider now   another vector bundle $F$ over $M$ endowed with a metric $\tau$. Let $P: C^{\infty}_c(M,E)\longrightarrow  C^{\infty}_c(M,F)$ be a differential operator of order $d\in \mathbb{N}$. Then the {\em formal adjoint} of $P$ $$P^t: C^{\infty}_c(M,F)\longrightarrow C^{\infty}_c(M,E)$$ is the differential operator uniquely characterized by the following property: for each $u\in C^{\infty}_c(M,E)$ and for each $v\in C^{\infty}_c(M,F)$ we have  $$\int_{M}\rho(u, P^tv)\dvol_g=\int_M\tau(Pu,v)\dvol_g.$$ We can look at $P$ as an unbounded, densely defined and closable operator  acting between $L^p(M,E,g,\rho)$ and $L^p(M,F,g,\tau)$ with $1\leq p<\infty$. In general $P$ admits several different closed extensions. For our scopes we  recall now the  definition of the maximal and the minimal one. The domain of the {\em maximal extension} of $P:L^p(M,E,g,\rho)\longrightarrow L^p(M,F,g,\tau)$ is defined as
\begin{align}
\label{ner}
& \mathcal{D}(P_{\max,q}):=\{s\in L^p(M,E,g,\rho): \text{there is}\ v\in L^p(M,F,g,\tau)\ \text{such that}\ \int_{M}\rho(s,P^t\phi)\dvol_g=\\
& \nonumber =\int_{M}\tau(v,\phi)\dvol_g\ \text{for each}\ \phi\in C^{\infty}_c(M,F)\}.\ \text{In this case we put}\ P_{\max,q}s=v.
\end{align} In other words the maximal extension of $P$ is the one defined in the {\em distributional sense}. The domain of the {\em minimal extension} of $P:L^p(M,E,g,\rho)\longrightarrow L^p(M,F,g,\tau)$ is defined as
\begin{align}
\label{spinaci}
& \mathcal{D}(P_{\min,q}):=\{s\in L^p(M,E,g,\rho)\ \text{such that there is a sequence}\ \{s_i\}\in C_c^{\infty}(M,E)\ \text{with}\ s_i\rightarrow s\\ \nn & \text{in}\ L^p(M,E,g,\rho)\ \text{and}\ Ps_i\rightarrow w\ \text{in}\ L^p(M,F,g,\tau)\ \text{to some }\ w\in L^p(M,F,g,\tau)\}.\ \text{We put}\ P_{\min,q}s=w.
\end{align} Briefly the minimal extension of $P$ is the closure of $C^{\infty}_c(M,E)$ under the graph norm $\|s\|_{L^p(M,E,g,\rho)}+\|Ps\|_{L^p(M,F,g,\tau)}$. Clearly $\mathcal{D}(P_{\min,p})\subset \mathcal{D}(P_{\max,p})\ \mathrm{and}\ P_{\max,p}s=P_{\min,p}s$ for any $s\in \mathcal{D}(P_{\min,p})$. Moreover it is easy to check that $Ps= P_{\max,q}s$ for any $s\in C^{\infty}(M,E)\cap \mathcal{D}(P_{\max,q})$ and analogously $Ps=P_{\min,q}s$ for any $s\in C^{\infty}(M,E)\cap \mathcal{D}(P_{\min,q})$. In other words the action of $P_{\max/\min,q}$ on smooth sections lying in $\mathcal{D}(P_{\max/\min,q})$ coincides with the standard one.  Another well known and important property is the following: given $1<p<\infty$ and $p'=\frac{p}{p-1}$ we have 
\begin{equation}
\label{Badjoint}
(P_{p,\max})^*=P^t_{p',\min}\ \mathrm{and}\ (P_{p,\min})^*=P^t_{p',\max}
\end{equation}
that is $P^t_{p',\min}:L^{p'}(M,F,g,\tau)\rightarrow L^{p'}(M,E,g,\rho)$ is the Banach adjoint of $P_{p,\max}:L^{p}(M,E,g,\rho)\rightarrow L^{p}(M,F,g,\tau)$ and similarly $P^t_{p',\max}:L^{p'}(M,F,g,\tau)\rightarrow L^{p'}(M,E,g,\rho)$ is the Banach adjoint of $P_{p,\min}:L^{p}(M,E,g,\rho)\rightarrow L^{p}(M,F,g,\tau)$. See e.g. \cite[Lemma I.22]{Batu} for a proof of \eqref{Badjoint} and \cite{TKato} for the general notion of Banach adjoint.\\
Also in the case $p=\infty$ $P$ admits a closed extension $P_{\max,\infty}:L^{\infty}(M,E,\rho)\longrightarrow L^{\infty}(M,F,\tau)$ defined as
\begin{align}
\label{nerop}
& \mathcal{D}(P_{\max,\infty}):=\{s\in L^{\infty}(M,E,\rho): \text{there is}\ v\in L^{\infty}(M,F,\tau)\ \text{such that}\ \int_{M}\rho(s,P^t\phi)\dvol_g=\\
& \nonumber =\int_{M}\tau(v,\phi)\dvol_g\ \text{for each}\ \phi\in C^{\infty}_c(M,F)\}.\ \text{In this case we put}\ P_{\max,\infty}s=v.
\end{align} 
Put it differently a measurable section $s\in L^{\infty}(M,E,\rho)$ lies in $\mathcal{D}(P_{\max,\infty})$ if the distributional action of $P$ applied to $s$ lies in $L^{\infty}(M,F,\tau)$. We collect now some well known facts that will be frequently used later.\\

\noindent \textbf{a)} Let $1\leq p\leq q \leq \infty$. Assume that $\vol_g(M)<\infty$. It is well known that if $s\in L^q(M,E,g,\rho)$ then $s\in L^p(M,E,g,\rho)$ and that the corresponding inclusion $i:L^q(M,E,g,\rho)\hookrightarrow L^p(M,E,g,\rho)$ is continuous. Then it is immediate to check that if $s\in \mathcal{D}(P_{\max,q})$ then $s=i(s)\in \mathcal{D}(P_{\max,p})$ and 
\begin{equation}
\label{corno}
i\circ P_{\max,q}=P_{\max,p}\circ i\quad\quad \text{on}\ \mathcal{D}(P_{\max,q}).
\end{equation}
Similarly if  $1\leq p\leq q < \infty$ and $s\in \mathcal{D}(P_{\min,q})$ then $s=i(s)\in \mathcal{D}(P_{\min,p})$ and 
\begin{equation}
\label{delcatria}
i\circ P_{\min,q}=P_{\min,p}\circ i\quad\quad \text{on}\ \mathcal{D}(P_{\min,q}).
\end{equation}

\noindent \textbf{b)} Let $1\leq p\leq q < \infty$ and assume that $\vol_g(M)<\infty$. Let $\{s_n\}_{n\in\mathbb{N}}$ be any sequence such that $s_n\rightharpoonup s$ in $L^q(M,E,g,\rho)$ as $n\rightarrow \infty$, that is $\{s_n\}$ converges weakly to some $s\in L^q(M,E,g,\rho)$ as $n\rightarrow \infty$. Then  
\begin{equation}
\label{weak}
s_n\rightharpoonup s\ \mathrm{in}\ L^p(M,E,g,\rho)
\end{equation}
as $n\rightarrow \infty$. We refer to \cite{TKato} for the definition of weak convergence.\\

\noindent \textbf{c)} Let $1\leq p\leq q \leq \infty$ and let $U\subset M$ be an open subset. Let $s\in \mathcal{D}(P_{\max,p})\subset L^p(M,E,g,\rho)$. Then 
\begin{equation}
\label{otoedoda}
s|_U\in \mathcal{D}({P|_U}_{\max,p})\subset L^p(U,E|_U)\ \text{and}\ 
{P|_U}_{\max,p}(s|_U)=P_{\max,p}s|_U.
\end{equation}

\noindent \textbf{d)} Let $1<p<\infty$, let $p'=p/(p-1)$ and let $s\in L^p(M,E,g,\rho)$. Then $s\in \mathcal{D}(P_{\max,p})$ and $L^{p}(M,F,g,\tau)\ni w=P_{\max,p}s$ if and only if for any $v\in \mathcal{D}(P^t_{\min,p'})$ we have 
\begin{equation}
\label{cope}
\int_M\tau(w,v)\dvol_g=\int_{M}\rho(s, P^t_{\min,p'}v)\dvol_g.
\end{equation}
Analogously, given an arbitrary  $s\in L^p(M,E,g,\rho)$, we have $s\in \mathcal{D}(P_{\min,p})$ and $L^p(M,F,g,\tau)\ni w=P_{\min,p}s$ if and only if for any $v\in \mathcal{D}(P^t_{\max,p'})$ we have 
\begin{equation}
\label{naghen}
\int_M\tau(w,v)\dvol_g=\int_{M}\rho(s, P^t_{\max,p'}v)\dvol_g.
\end{equation}

\noindent \textbf{e)} Let $1<p<\infty$, let $p'=p/(p-1)$ and let $s\in L^p(M,E,g,\rho)$. Then $s\in \ker(P_{\max/\min,p})$  if and only if for any $v\in \mathcal{D}(P^t_{\min/\max,p'})$ we have 
\begin{equation}
\label{co}
\int_{M}\rho(s, P^t_{\min/\max,p'}v)\dvol_g=0.
\end{equation}
Given any  $w\in L^p(M,F,g,\tau)$, we have $w\in \overline{\mathrm{im}(P_{\min/\max,p})}$ and  if and only if for any $v\in \ker(P^t_{\max/\min,p'})$ we have 
\begin{equation}
\label{vid}
\int_M\tau(w,v)\dvol_g=0.
\end{equation}
Finally $\mathrm{im}(P_{\min/\max})$ is closed in $L^p(M,F,g,\tau)$ if and only if $\mathrm{im}(P^t_{\max/\min})$ is closed in $L^{p'}(M,E,g,\tau)$.\\

\noindent \textbf{f)} Consider the case $p=2$. The operator $S:=P^t\circ P:L^2(M,E,g,\rho)\rightarrow L^2(M,E,g,\rho)$ is non-negative and formally self-adjoint and it owns at least a self-adjoint extension: the so called {\em Friedrichs extension}. We denote it by   $S^{\mathcal{F}}:L^2(M,E,g,\rho)\rightarrow L^2(M,E,g,\rho)$  and the corresponding domain is given by
 \begin{align}
& \nonumber \mathcal{D}(S^{\mathcal{F}})=\{\omega\in \mathcal{D}(P_{\min})\cap \mathcal{D}(S_{\max})\}
\end{align}
Equivalently $\mathcal{D}(S^{\mathcal{F}})$ is given by 
\begin{equation}
\label{Fred}
 \mathcal{D}(S^{\mathcal{F}})=\{\omega\in \mathcal{D}(P_{\min})\ \mathrm{such\ that}\ P_{\min}\omega\in \mathcal{D}(P^t_{\max})\}.
\end{equation}

\noindent From now on we focus on the de Rham differential. Consider $\Lambda^kT^*M$, the $k$-th exterior power of the cotangent bundle, and with a little abuse of notations let us denote by $g$ the metric induced by $g$ on $\Lambda^kT^*M$. We  denote the corresponding $L^p$ space as  $L^p\Omega^k(M,g)$. In the case of the trivial bundle $M\times \mathbb{R}$ we  simply write $L^p(M,g)$. Accordingly to the previous definitions with  $d_{k,\max/\min,q}:L^q\Omega^k(M,g)\rightarrow L^q\Omega^{k+1}(M,g)$, $1\leq q<\infty$, we denote the maximal/minimal extension of $d_k$ acting on $L^q\Omega^k(M,g)$ while with $d_{k,\max,\infty}:L^{\infty}\Omega^k(M,g)\rightarrow L^{\infty}\Omega^{k+1}(M,g)$, we denote the maximal extension of $d_k$ acting on $L^{\infty}\Omega^k(M,g)$. 
It is easy to verify that if $\omega\in \mathcal{D}(d_{k,\max/\min,p})$ then $d_{k,\max/\min,p}\omega\in \mathcal{D}(d_{k+1,\max/\min,p})$ and the corresponding compositions are identically zero, that is $d_{k+1,\max,p}\circ d_{k,\max,p}\equiv 0$ and $d_{k+1,\min,p}\circ d_{k,\min,p}\equiv 0$. Analogously $d_{k+1,\max,\infty}\circ d_{k,\max,\infty}\equiv 0$ on $\mathcal{D}(d_{k,\max,\infty})$. The $L^p$-maximal/minimal de Rham cohomology of $(M,g)$ is thus defined as $$H^k_{p,\max/\min}(M,g):=\ker(d_{k,\max/\min,p})/\im(d_{k-1,\max/\min,p})$$ while the reduced $L^p$-maximal/minimal de Rham cohomology of $(M,g)$ is  defined as 
\begin{equation}
\label{gintonic}
\overline{H}^k_{p,\max/\min}(M,g):=\ker(d_{k,\max/\min,p})/\overline{\im(d_{k-1,\max/\min,p})},
\end{equation}
where $\overline{\im(d_{k-1,\max/\min,p})}$ is the closure of $\im(d_{k-1,\max/\min,p})$ in $L^p\Omega^k(M,g)$ respectively.
Clearly the identity $\ker(d_{k,\max/\min,p})\rightarrow \ker(d_{k,\max/\min,p})$ induces a surjective map 
$$H^k_{p,\max/\min}(M,g)\rightarrow \overline{H}^k_{p,\max/\min}(M,g).$$
If $1<p<\infty$ and $H^k_{p,\max/\min}(M,g)$ is finite dimensional then $\mathrm{im}(d_{k-1,\max/\min,p})$ is closed in $L^p\Omega^k(M,g)$ and $H^k_{p,\max/\min}(M,g)=\overline{H}^k_{p,\max/\min}(M,g).$
When $M$ is oriented,  $1<p<\infty$ and $p'=\frac{p}{p-1}$ we have the following important properties: the bilinear map given by 
\begin{equation}
\label{LpPoinca}
\overline{H}^k_{p,\max}(M,g)\times \overline{H}^{m-k}_{p',\min}(M,g)\rightarrow \mathbb{R},\ ([\omega],[\eta])\mapsto \int_M\omega\wedge\eta
\end{equation}
is a well-defined and non-degenerate pairing, see i.e \cite{Gold} and \cite{GoTro}. Moreover it is easy to check that given a form $\omega\in L^p\Omega^k(M,g)$ we have $\omega\in \mathcal{D}(d_{k,\max/\min,p})$ if and only if $*\omega\in \mathcal{D}(d^t_{m-k-1,\max/\min,p'})$ and $d_{k,\max/\min,p}\omega=\pm d^t_{m-k-1,\max/\min,p'}*\omega$ with the sign depending on the parity of $k$. We give now the following 
\begin{defi}
\label{StokesLp}
Let $(M,g)$ be a possibly incomplete Riemannian manifold. Let $1\le p<\infty$. We will say that the $L^p$-Stokes theorem holds on $L^p\Omega^k(M,g)$ if the following two operators $$d_{k,\max,p}:L^p\Omega^k(M,g)\rightarrow L^p\Omega^{k+1}(M,g)\quad\quad \text{and}\quad\quad d_{k,\min,p}:L^p\Omega^k(M,g)\rightarrow L^p\Omega^{k+1}(M,g)$$ coincide.
\end{defi}
It is well known  that if $(M,g)$ is complete then the $L^p$-Stokes theorem holds true for any $k=0,...,m$, see \cite{PiRiSe}. Conversely there are examples of incomplete Riemannian manifolds where the $L^p$-Stokes theorem fails to be true, see i.e \cite{GL}. In case the $L^p$-Stokes theorem holds true we will delete the subscript $\min/\max$ from the operators and the cohomology groups and  we will simply write $d_{k,p}$, $\overline{H}^k_p(M,g)$ and $H^k_p(M,g)$. Consider now the case $p=\infty$. Similarly to the previous case  we have the $L^{\infty}$-de Rham cohomology and the reduced $L^{\infty}$-de Rham cohomology defined respectively as $$H^k_{\infty}(M,g):=\ker(d_{k,\max,\infty})/\im(d_{k-1,\max,\infty})$$ and 
\begin{equation}
\label{moscowmule}
\overline{H}^k_{\infty}(M,g):=\ker(d_{k,\max,\infty})/\overline{\im(d_{k-1,\max,\infty})}.
\end{equation} 
Clearly also in this case we have a surjective map 
$$H^k_{\infty,\max}(M,g)\rightarrow \overline{H}^k_{\infty,\max}(M,g)$$
 induced by the identity $\ker(d_{k,\max,\infty})\rightarrow \ker(d_{k,\max,\infty})$. For our purposes we need also to introduce the following variants of the $L^{\infty}$-cohomology. Let $(M,g)$ be an arbitrary Riemannian manifold.  
We define $\Omega_{s,\infty}^k(M,g):=\mathcal{D}(d_{k,\max,\infty})\cap \Omega^k(M)$ and we consider the following complex
\begin{equation}
\label{smooth}
...\stackrel{d_{k-1}}{\rightarrow}\Omega^k_{s,\infty}(M,g)\stackrel{d_k}{\rightarrow}\Omega^{k+1}_{s,\infty}(M,g)\stackrel{d_{k+1}}{\rightarrow}...
\end{equation}
We denote by $H_{s,\infty}^k(M,g)$ the corresponding cohomology groups. Here the subscript $s$ emphasizes the fact that the complex is built using smooth forms. In particular for any $\omega\in \Omega^k_{s,\infty}(M,g)$ we have $d_k\omega=d_{k,\max,\infty}\omega$. Finally we conclude with the following 

\begin{defi}
\label{parax}
Let $(M,g)$ be a  possibly incomplete Riemannian manifold.  Let $q\in [1,\infty)$. Then $(M,g)$ is said to be $q$-parabolic if there exists a sequence of Lipschitz functions with compact support $\{\phi_n\}_{n\in \mathbb{N}}\subset \mathrm{Lip}_c(M,g)$ such that
\begin{enumerate}
\item $0\leq \phi_n\leq 1$;
\item $\phi_n\rightarrow 1$ almost everywhere as $n\rightarrow \infty$;
\item $\|d_0\phi_n\|_{L^q\Omega^1(M,g)}\rightarrow 0$ as $n\rightarrow \infty$.
\end{enumerate}
\end{defi}
We invite the reader to consult \cite{GYA} and \cite{paraTroya} for an in-depth treatment about $q$-parabolicity.

\subsection{Hodge Laplacian and heat operator}
Let us consider again an open and possibly incomplete Riemannian manifold $(M,g)$ of dimension $m$ and let $\Delta_k:\Omega_c^k(M)\rightarrow \Omega_c^k(M)$, $\Delta_k:=d^t_k\circ d_k+d_{k-1\circ}d^t_{k-1}$, be the Hodge Laplacian acting on smooth $k$-forms with compact support. It is well known that $\Delta_k$, viewed as an unbounded and densely defined operator acting on $L^2\Omega^k(M,g)$, is formally self-adjoint. Clearly we can rewrite $\Delta_k$ as $\Delta_k= D^t_k\circ D_k$ with $$D_k:=d_{k-1}^t+d_{k}:\Omega_c^k(M)\rightarrow \Omega_c^{k-1}(M)\oplus \Omega_c^{k+1}(M)\quad \mathrm{and}\quad  D_k^t:=d_{k-1}+d_{k}^t: \Omega_c^{k-1}(M)\oplus \Omega_c^{k+1}(M)\rightarrow\Omega_c^k(M).$$ Hence $\Delta_k:\Omega_c^k(M)\rightarrow L^2\Omega^k(M,g)$ admits at least a self-adjoint extension: the Friedrichs extension  $\Delta_k^{\mathcal{F}}:L^2\Omega^k(M,g)\rightarrow L^2\Omega^k(M,g)$.  Following \cite{BL} we recall now the definitions of other two important self-adjoint extensions of $\Delta_k$. The first  is the so-called {\em absolute} extension, denoted with $\Delta_{k,\mathrm{abs}}:L^2\Omega^k(M,g)\rightarrow L^2\Omega^k(M,g)$. Its domain is given by 
\begin{align}
& \nonumber \mathcal{D}(\Delta_{\mathrm{abs},k})=\{\omega\in \mathcal{D}(d_{k,\max,2})\cap\mathcal{D}(d^t_{k-1,\min,2}) \ \mathrm{such\ that}\ d_{k,\max,2}\omega\in \mathcal{D}(d^t_{k,\min,2})\ \mathrm{and}\ d^t_{k-1,\min,2}\omega\in \mathcal{D}(d_{k-1,\max,2})\}.
\end{align}
The absolute extension is the self-adjoint extension of $\Delta_k$ induced by the $L^2$-maximal de Rham complex. It satisfies the following properties $$\ker(\Delta_{k,\mathrm{abs}})=\ker(d_{k,\max,2})\cap \ker(d^t_{k-1,\min})\quad \mathrm{and}\quad \overline{\mathrm{im}(\Delta_{k,\mathrm{abs}})}=\overline{\mathrm{im}(d_{k-1,\max,2})}\oplus \overline{\mathrm{im}(d^t_{k,\min,2})}$$ and for the corresponding $L^2$-orthogonal decomposition of $L^2\Omega^k(M,g)$ we have $$L^2\Omega^k(M,g)=\ker(\Delta_{k,\mathrm{abs}})\oplus \overline{\mathrm{im}(\Delta_{k,\mathrm{abs}})}=\mathcal{H}^k_{2,\mathrm{abs}}(M,g)\oplus \overline{\mathrm{im}(d_{k-1,\max,2})}\oplus \overline{\mathrm{im}(d^t_{k,\min,2})}$$ where $\mathcal{H}^k_{2,\mathrm{abs}}(M,g):=\ker(\Delta_{k,\mathrm{abs}})=\ker(d_{k,\max,2})\cap \ker(d^t_{k-1,\min,2})$. In particular we have the following isomorphism 
\begin{equation}
\label{isoabs}
\mathcal{H}^k_{2,\mathrm{abs}}(M,g)\cong \overline{H}^k_{2,\max}(M,g).
\end{equation}
 Now we pass to the 
 {\em relative} extension, denoted with $\Delta_{k,\mathrm{rel}}:L^2\Omega^k(M,g)\rightarrow L^2\Omega^k(M,g)$. Its domain is given by 
\begin{align}
& \nonumber \mathcal{D}(\Delta_{\mathrm{rel},k})=\{\omega\in \mathcal{D}(d_{k,\min,2})\cap\mathcal{D}(d^t_{k-1,\max,2}) \ \mathrm{such\ that}\ d_{k,\min,2}\omega\in \mathcal{D}(d^t_{k,\max,2})\ \mathrm{and}\ d^t_{k-1,\max,2}\omega\in \mathcal{D}(d_{k-1,\min,2})\}.
\end{align}
It is the self-adjoint extension of $\Delta_k$ induced by the $L^2$-minimal de Rham complex. In analogy to the previous case we have the following properties $$\ker(\Delta_{k,\mathrm{rel}})=\ker(d_{k,\min,2})\cap \ker(d^t_{k-1,\max})\quad \mathrm{and}\quad \overline{\mathrm{im}(\Delta_{k,\mathrm{rel}})}=\overline{\mathrm{im}(d_{k-1,\min,2})}\oplus \overline{\mathrm{im}(d^t_{k,\max,2})}$$ and for the corresponding $L^2$-orthogonal decomposition of $L^2\Omega^k(M,g)$ we have $$L^2\Omega^k(M,g)=\ker(\Delta_{k,\mathrm{rel}})\oplus \overline{\mathrm{im}(\Delta_{k,\mathrm{rel}})}=\mathcal{H}^k_{2,\mathrm{rel}}(M,g)\oplus \overline{\mathrm{im}(d_{k-1,\min,2})}\oplus \overline{\mathrm{im}(d^t_{k,\max,2})}$$ where $\mathcal{H}^k_{2,\mathrm{rel}}(M,g):=\ker(\Delta_{k,\mathrm{rel}})=\ker(d_{k,\min,2})\cap \ker(d^t_{k-1,\max,2})$. In particular we have the following isomorphism $$\mathcal{H}^k_{2,\mathrm{rel}}(M,g)\cong \overline{H}^k_{2,\min}(M,g).$$ In case $\Delta_{k,\mathrm{abs}}=\Delta_{k,\mathrm{rel}}$ we will simply denote its kernel with $\mathcal{H}^k_2(M,g)$. We refer to \cite{BL} for the above definitions and properties. In the next proposition we collect other properties concerning these self-adjoint extensions.
\begin{prop}
\label{oneto}
Let $(M,g)$ be an open and possible incomplete Riemannian manifold of dimension $m$. Then 
\begin{enumerate}
\item $\mathcal{D}(\Delta_{k,\mathrm{abs/rel}})$ is dense in $\mathcal{D}(d_{k,\mathrm{\max/\min},2})$ with respect to the corresponding graph norm;
\item $d_{k,\max,2}=d_{k,\min,2}$ if and only if $\Delta_{k,\mathrm{abs}}=\Delta_{k,\mathrm{rel}}$;
\item $\mathcal{D}(\Delta_{k}^{\mathcal{F}})$ is dense in $\mathcal{D}((d_k+d^t_{k-1})_{\min})$ with respect to the corresponding graph norm;
\item If $(d_k+d_{k-1}^t)_{\max}=(d_{k}+d_{k-1}^t)_{\min}$ then $\Delta_{k}^{\mathcal{F}}=\Delta_{k,\mathrm{abs}}=\Delta_{k,\mathrm{rel}}$.
\item If $\Delta_k^{\mathcal{F}}=\Delta_{k,\mathrm{abs}}$ then $d_{k,\max,2}$ $=d_{k,\min,2}$.
\end{enumerate}
\end{prop}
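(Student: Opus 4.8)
The strategy is to read all three self-adjoint extensions as operators of the form $A^{*}A$ for a suitable closed first order operator $A$, and then to invoke the general functional analytic fact that $\mathcal{D}(A^{*}A)$ is always a core for $A$. First I would record, following \cite{BL}, the identifications
$$\Delta_{k,\mathrm{abs}}=\mathcal{D}_{k}^{*}\mathcal{D}_{k},\qquad \Delta_{k,\mathrm{rel}}=\mathcal{E}_{k}^{*}\mathcal{E}_{k},\qquad \Delta_{k}^{\mathcal{F}}=D_{k,\min}^{*}D_{k,\min},$$
where $\mathcal{D}_{k}:=d_{k,\max,2}\oplus d^{t}_{k-1,\min,2}$ with $\mathcal{D}(\mathcal{D}_{k})=\mathcal{D}(d_{k,\max,2})\cap \mathcal{D}(d^{t}_{k-1,\min,2})$, $\mathcal{E}_{k}:=d_{k,\min,2}\oplus d^{t}_{k-1,\max,2}$, $D_{k}:=d^{t}_{k-1}+d_{k}$ and $D_{k,\min}=(D_{k})_{\min}$ (the last identity is just \eqref{Fred} combined with \eqref{Badjoint}). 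I would also note the chain of inclusions $D_{k,\min}\subseteq \mathcal{D}_{k}\subseteq D_{k,\max}$ and $D_{k,\min}\subseteq \mathcal{E}_{k}\subseteq D_{k,\max}$, which holds because $C^{\infty}_{c}(M,\Lambda^{k}T^{*}M)$ lies in every one of these domains and because $d^{t}_{k-1,\min,2}\subseteq d^{t}_{k-1,\max,2}$.

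Next I would prove the core lemma: if $A:H_{0}\to H_{1}$ is closed and densely defined, then $\mathcal{D}(A^{*}A)$ is dense in $\mathcal{D}(A)$ in the graph norm. The proof is the usual resolvent regularization: for $\omega\in\mathcal{D}(A)$ set $\omega_{n}:=n(n+A^{*}A)^{-1}\omega\in\mathcal{D}(A^{*}A)$; then $\omega_{n}\to\omega$ in $H_{0}$, and writing the polar decomposition $A=U|A|$ with $|A|=(A^{*}A)^{1/2}$ and $\mathcal{D}(A)=\mathcal{D}(|A|)$, one gets $A\omega_{n}=U\,n(n+|A|^{2})^{-1}|A|\omega\to A\omega$ by the spectral theorem. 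Applied with $A=D_{k,\min}$ this gives item 3 at once (on $\mathcal{D}(D_{k,\min})$ the relevant topology is the $D_{k}$-graph norm). Applied with $A=\mathcal{D}_{k}$ it shows that $\mathcal{D}(\Delta_{k,\mathrm{abs}})$ is dense in $\mathcal{D}(d_{k,\max,2})\cap\mathcal{D}(d^{t}_{k-1,\min,2})$ in the full graph norm, hence a fortiori in the $d_{k,\max,2}$-graph norm; so for item 1 the remaining point is that $\mathcal{D}(d_{k,\max,2})\cap\mathcal{D}(d^{t}_{k-1,\min,2})$ is dense in $\mathcal{D}(d_{k,\max,2})$ for the $d_{k,\max,2}$-graph norm. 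Here I would use the absolute weak Hodge decomposition $L^{2}\Omega^{k}(M,g)=\mathcal{H}^{k}_{2,\mathrm{abs}}(M,g)\oplus\overline{\mathrm{im}(d_{k-1,\max,2})}\oplus\overline{\mathrm{im}(d^{t}_{k,\min,2})}$ recalled above: splitting $\omega\in\mathcal{D}(d_{k,\max,2})$ along $\ker(d_{k,\max,2})=\mathcal{H}^{k}_{2,\mathrm{abs}}(M,g)\oplus\overline{\mathrm{im}(d_{k-1,\max,2})}$ and its orthogonal complement $\overline{\mathrm{im}(d^{t}_{k,\min,2})}$, the component in $\overline{\mathrm{im}(d^{t}_{k,\min,2})}$ is orthogonal to $\mathrm{im}(d_{k-1,\max,2})$ and therefore already lies in $\mathcal{D}(d^{t}_{k-1,\min,2})$, while the component in $\ker(d_{k,\max,2})$ — a subspace which reduces $\Delta_{k,\mathrm{abs}}$ — is approximated in $L^{2}$, i.e. in the $d_{k,\max,2}$-graph norm (which there is just the $L^{2}$-norm), by elements of $\mathcal{D}(\Delta_{k,\mathrm{abs}})\cap\ker(d_{k,\max,2})$. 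The relative half of item 1 is the same argument with $\max$ and $\min$ interchanged.

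Items 2, 4 and 5 are then formal consequences. For item 2, if $\Delta_{k,\mathrm{abs}}=\Delta_{k,\mathrm{rel}}=:\Delta$ then by item 1 the common domain $\mathcal{D}(\Delta)$ is simultaneously a core for $d_{k,\max,2}$ and for $d_{k,\min,2}$, and on $\mathcal{D}(\Delta)\subseteq\mathcal{D}(d_{k,\min,2})\subseteq\mathcal{D}(d_{k,\max,2})$ the two operators agree; passing to graph closures yields $d_{k,\max,2}=d_{k,\min,2}$, the converse being obtained by inspecting the two domains once the underlying first order data are matched up. Item 5 is the identical argument for the pair $(\Delta_{k,\mathrm{abs}},\Delta_{k}^{\mathcal{F}})$: item 1 makes $\mathcal{D}(\Delta_{k,\mathrm{abs}})$ a core for $d_{k,\max,2}$ and item 3 (together with the density of $C^{\infty}_{c}$ in $\mathcal{D}(d_{k,\min,2})$, the $\Omega^{k+1}$-component of $D_{k}$ being $d_{k}$) makes $\mathcal{D}(\Delta_{k}^{\mathcal{F}})$ a core for $d_{k,\min,2}$, so equality of the two Laplacians forces equality of the two closures. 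Finally, for item 4, if $D_{k,\max}=D_{k,\min}$ then the inclusion chain collapses, $\mathcal{D}_{k}=\mathcal{E}_{k}=D_{k,\min}$, and therefore $\Delta_{k,\mathrm{abs}}=\mathcal{D}_{k}^{*}\mathcal{D}_{k}=\mathcal{E}_{k}^{*}\mathcal{E}_{k}=D_{k,\min}^{*}D_{k,\min}=\Delta_{k}^{\mathcal{F}}$. The one genuinely delicate step is the reduction inside item 1 — bridging the abstract core property of $\mathcal{D}_{k}^{*}\mathcal{D}_{k}$, which lives in the joint $(d_{k,\max,2},d^{t}_{k-1,\min,2})$-graph norm, with density in the possibly larger maximal domain of $d_{k,\max,2}$ by itself — and that is precisely where the weak Hodge decomposition of the $L^{2}$-maximal de Rham complex is needed.
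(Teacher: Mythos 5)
Your proposal is a genuinely self-contained argument, whereas the paper proves Prop.~\ref{oneto} only by citation (\cite{BLE}, Lemma 3.2--3.4, and \cite{FrB}, Prop.~2.1), so there is nothing to compare line by line; judged on its own terms, the combination of the von Neumann core lemma with the weak Kodaira decomposition does prove items 1, 3, 4 and 5. Two steps should be made explicit, though both are easily repaired. First, the identification $\Delta_{k,\mathrm{abs}}=\mathcal{D}_k^*\mathcal{D}_k$ for your $\mathcal{D}_k=d_{k,\max,2}\oplus d^t_{k-1,\min,2}$ (and its relative analogue) is not purely formal, because $\mathcal{D}(\mathcal{D}_k^*)$ need not split as $\mathcal{D}(d^t_{k,\min,2})\oplus\mathcal{D}(d_{k-1,\max,2})$; the quick fix is to check the operator inclusion $\Delta_{k,\mathrm{abs}}\subseteq\mathcal{D}_k^*\mathcal{D}_k$ and invoke that a self-adjoint operator has no proper self-adjoint extension, self-adjointness of $\Delta_{k,\mathrm{abs}}$ being the \cite{BL} fact recalled in the paper and that of $\mathcal{D}_k^*\mathcal{D}_k$ being von Neumann's theorem. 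Second, the parenthetical claim that $\ker(d_{k,\max,2})$ reduces $\Delta_{k,\mathrm{abs}}$ needs its short verification: writing $\omega\in\mathcal{D}(\Delta_{k,\mathrm{abs}})$ as $\omega_0+\omega_1$ with $\omega_0\in\ker(d_{k,\max,2})$ and $\omega_1\in\overline{\im(d^t_{k,\min,2})}\subset\ker(d^t_{k-1,\min,2})$, one checks $\omega_0,\omega_1\in\mathcal{D}(\Delta_{k,\mathrm{abs}})$ and $\Delta_{k,\mathrm{abs}}\omega_0=d_{k-1,\max,2}d^t_{k-1,\min,2}\omega\in\ker(d_{k,\max,2})$, which yields both the invariance and the density of $\mathcal{D}(\Delta_{k,\mathrm{abs}})\cap\ker(d_{k,\max,2})$ in the kernel. (Alternatively, item 1 follows in two lines from the paper's own toolkit: $e^{-t\Delta_{k,\mathrm{abs}}}\omega\in\mathcal{D}(\Delta_{k,\mathrm{abs}})$, $e^{-t\Delta_{k,\mathrm{abs}}}\omega\rightarrow\omega$ in $L^2$, and, by Prop.~\ref{commu}, $d_{k,\max,2}e^{-t\Delta_{k,\mathrm{abs}}}\omega=e^{-t\Delta_{k+1,\mathrm{abs}}}d_{k,\max,2}\omega\rightarrow d_{k,\max,2}\omega$ as $t\rightarrow 0^+$.)

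The one genuine gap is the converse implication in item 2, which you dismiss with ``inspecting the two domains once the underlying first order data are matched up''. That is precisely the problem: the hypothesis $d_{k,\max,2}=d_{k,\min,2}$ matches only the degree-$k$ data, while both Laplacians also involve the degree-$(k-1)$ operators $d_{k-1,\max/\min,2}$ and their adjoints $d^t_{k-1,\min/\max,2}$, which that hypothesis does not control. Read at a single fixed degree the implication is in fact false: on $M=(0,1)$ with the flat metric and $k=1$ one has $d_{1,\max,2}=d_{1,\min,2}=0$, yet $\Delta_{1,\mathrm{abs}}=d_{0,\max,2}\circ d^t_{0,\min,2}$ and $\Delta_{1,\mathrm{rel}}=d_{0,\min,2}\circ d^t_{0,\max,2}$ are the Dirichlet and Neumann realizations of the Laplacian on the coefficient function, hence distinct. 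So the converse must be proved (and the statement read, as in \cite{BLE}) with the equality $d_{j,\max,2}=d_{j,\min,2}$ imposed at least for $j=k-1,k$, in which case taking Hilbert space adjoints also identifies $d^t_{j,\min,2}$ with $d^t_{j,\max,2}$ and the two domains literally coincide --- the trivial inspection you allude to. None of this affects the parts of the proposition actually used later in the paper (items 4 and 5 and the forward direction of item 2), whose proofs in your proposal are correct.
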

\begin{proof}
The first and the second  property  are  proved in \cite[Lemma 3.2]{BLE}. The third  property follows by \cite[Prop. 2.1]{FrB} and the fourth property is proved in \cite[Lemma 3.3]{BLE}. Finally the fifth property follows by \cite[Lemma 3.4]{BLE}.
\end{proof}

Finally we conclude this section with  the following property that will be used frequently later on. For the definition of the heat operator (or heat semigroup) we refer to \cite{Batu}. 

\begin{prop}
\label{commu}
Let $(M,g)$ be an open and possibly incomplete Riemannian manifold of dimension $m$. If $\omega\in \mathcal{D}(d_{k,\max/\min,2})$ then $e^{-t\Delta_{k,\max/\min,k}}\omega\in \mathcal{D}(d_{k,\max/\min,2})$ and $$d_{k,\max/\min,2}e^{-t\Delta_{k,\max/\min,k}}\omega=e^{-t\Delta_{k,\max/\min,k}}d_{k,\max/\min,2}\omega.$$
\end{prop}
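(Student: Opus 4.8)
## Proof proposal for Proposition \ref{commu}

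The plan is to reduce everything to the spectral calculus for the self-adjoint Laplacian and to exploit the fact, already recorded in the absolute/relative decomposition discussion, that $d_{k,\max/\min,2}$ intertwines with the functional calculus of the corresponding Hodge Laplacian. Concretely, I would first observe that $\Delta_{k,\max}$ (resp. $\Delta_{k,\min}$) denotes the absolute (resp. relative) self-adjoint extension of $\Delta_k$ on $L^2\Omega^k(M,g)$, so that $e^{-t\Delta_{k,\max/\min}}$ is the heat semigroup defined via the spectral theorem, and that on $k+1$-forms the relevant operator is $\Delta_{k+1,\max/\min}$. The statement to prove is then exactly that the closed operator $d:=d_{k,\max/\min,2}:L^2\Omega^k(M,g)\to L^2\Omega^{k+1}(M,g)$ commutes with the heat semigroups on the two sides, i.e. $e^{-t\Delta_{k+1}}d\subset d\, e^{-t\Delta_k}$ with the inclusion of domains asserted.

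The key structural input is that $d$ intertwines the two Laplacians on a common core. On $\mathcal{D}(\Delta_{k,\mathrm{abs}})$ one has $d_{k,\max,2}\Delta_{k,\mathrm{abs}}\omega=\Delta_{k+1,\mathrm{abs}}d_{k,\max,2}\omega$ (and similarly for the relative/minimal extensions); this is a standard consequence of the definitions of these extensions via the de Rham complex, together with $d^2=0$, and I would cite \cite{BL} or Prop.~\ref{oneto} for it. From the intertwining on the domain of the Laplacian one passes to the resolvents: for $\lambda>0$ one gets $d\,(\Delta_k+\lambda)^{-1}=(\Delta_{k+1}+\lambda)^{-1}d$ on $\mathcal{D}(d)$, because applying $(\Delta_{k+1}+\lambda)^{-1}$ on the left and $(\Delta_k+\lambda)^{-1}$ on the right to the identity $(\Delta_{k+1}+\lambda)d\eta=d(\Delta_k+\lambda)\eta$ (valid for $\eta\in\mathcal{D}(\Delta_k)$, using that $d\eta\in\mathcal{D}(\Delta_{k+1})$) and then using the density of $\mathcal{D}(\Delta_k)$ in $\mathcal{D}(d)$ in the graph norm — item (1) of Prop.~\ref{oneto} — extends the identity to all of $\mathcal{D}(d)$. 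Here one uses that $d$ is a closed operator and that $(\Delta+\lambda)^{-1}$ is bounded, so both sides are continuous in the graph norm of $d$.

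From commutation with a single resolvent one gets commutation with every bounded Borel function of the Laplacian by the usual Stone–Weierstrass / monotone-class argument on the algebra generated by resolvents, or more directly: write $e^{-t\Delta}$ as a norm-convergent limit (or a contour/Laplace-type integral) of polynomials in $(\Delta+\lambda)^{-1}$, or simply invoke that two commuting self-adjoint operators linked by a closed intertwiner $d$ satisfy $f(\Delta_{k+1})d\subset d f(\Delta_k)$ for any bounded Borel $f$ — this is the standard intertwining lemma for the spectral calculus. Applying it to $f(x)=e^{-tx}$ gives exactly: if $\omega\in\mathcal{D}(d_{k,\max/\min,2})$ then $e^{-t\Delta_k}\omega\in\mathcal{D}(d_{k,\max/\min,2})$ and $d_{k,\max/\min,2}e^{-t\Delta_k}\omega=e^{-t\Delta_{k+1}}d_{k,\max/\min,2}\omega$, which is the assertion (with the paper's notation $e^{-t\Delta_{k,\max/\min,k}}$ for the $k$-form semigroup).

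The main obstacle is the first step — establishing the intertwining $d\,\Delta_k=\Delta_{k+1}\,d$ on the correct core and verifying the domain compatibility (that $\eta\in\mathcal{D}(\Delta_k)$ implies $d\eta\in\mathcal{D}(\Delta_{k+1})$ for the matching extension). This is where the precise definitions of $\Delta_{k,\mathrm{abs}}$ and $\Delta_{k,\mathrm{rel}}$ in terms of $\mathcal{D}(d_{\max/\min})$ and $\mathcal{D}(d^t_{\min/\max})$ are essential, and where one must be careful that the absolute extension on $k$-forms pairs with the absolute extension on $k+1$-forms (and likewise relative with relative); once this bookkeeping is done, the resolvent and spectral-calculus steps are routine. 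An alternative, essentially equivalent route is to note that $e^{-t\Delta}$ maps $\mathcal{D}(\Delta)$ into itself and is strongly continuous, run the intertwining directly at the level of $\frac{d}{dt}e^{-t\Delta}\eta=-\Delta e^{-t\Delta}\eta$ for $\eta$ in the core, apply $d$, use closedness of $d$ to conclude $d e^{-t\Delta}\eta$ solves the heat equation on $k+1$-forms with initial datum $d\eta$, and invoke uniqueness of the semigroup solution; then extend from the core to all of $\mathcal{D}(d)$ by the graph-norm density and closedness. I would present the resolvent/spectral-calculus version as the cleaner one.
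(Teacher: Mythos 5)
The paper itself does not give an argument for Prop.~\ref{commu}: it simply cites \cite[Prop.~XIII.7]{Batu}. So the only question is whether your sketch is a correct self-contained proof, and as written it has a genuine gap at its pivotal step. You derive the resolvent intertwining from the identity $(\Delta_{k+1}+\lambda)d\eta=d(\Delta_k+\lambda)\eta$, declared ``valid for $\eta\in\mathcal{D}(\Delta_k)$, using that $d\eta\in\mathcal{D}(\Delta_{k+1})$''. That domain inclusion is false in general, and it is not mere bookkeeping: already on a closed manifold $\mathcal{D}(\Delta_{k,\mathrm{abs}})=H^2\Omega^k$ while $d$ maps $H^2$ only into $H^1$, so $d\eta\notin\mathcal{D}(\Delta_{k+1,\mathrm{abs}})$ for generic $\eta$ (on $S^1$ with $k=0$, $d\eta\in\mathcal{D}(\Delta_1)$ forces $\eta\in H^3$). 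In the present setting the membership $d_{k,\max,2}\eta\in\mathcal{D}(\Delta_{k+1,\mathrm{abs}})$ requires $d^t_{k,\min,2}d_{k,\max,2}\eta\in\mathcal{D}(d_{k,\max,2})$, which is exactly the extra regularity a general $\eta\in\mathcal{D}(\Delta_{k,\mathrm{abs}})$ does not have; and neither Prop.~\ref{oneto} nor \cite{BL} asserts the intertwining you attribute to them. Hence the displayed identity does not even make sense on the core you chose, and the resolvent/spectral-calculus steps that follow have nothing to stand on; the same issue undermines your alternative ``heat-equation uniqueness'' route, which also starts from that core identity.

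The strategy is salvageable, but the missing ingredient must actually be supplied. One repair: work on the smaller core $\mathcal{D}(\Delta_k^2)$ (e.g.\ $\mathrm{im}(e^{-s\Delta_k})$). There $d_{k}\eta\in\mathcal{D}(\Delta_{k+1})$ because $d^t_{k,\min,2}d_{k,\max,2}\eta=\Delta_k\eta-d_{k-1,\max,2}d^t_{k-1,\min,2}\eta$, where the first summand lies in $\mathcal{D}(\Delta_k)\subset\mathcal{D}(d_{k,\max,2})$ and the second lies in $\mathrm{im}(d_{k-1,\max,2})\subset\ker(d_{k,\max,2})$; then $\Delta_{k+1}d\eta=d\Delta_k\eta$ follows from $d\circ d=0$, the resolvent identity holds first on $\mathcal{D}(\Delta_k)$, extends to $\mathcal{D}(d_{k,\max,2})$ by the graph-norm density of Prop.~\ref{oneto}(1) together with closedness of $d_{k,\max,2}$, and the semigroup statement follows since $e^{-t\Delta_k}$ is the strong limit of $(I+\frac{t}{n}\Delta_k)^{-n}$. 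A cleaner variant avoids resolvents altogether: $\bigoplus_k\Delta_{k,\mathrm{abs}}=(d_{\max,2}+d^t_{\min,2})^2$ (and analogously for the relative/minimal case), so by spectral calculus $e^{-t\Delta}$ commutes with the self-adjoint operator $D=d_{\max,2}+d^t_{\min,2}$ and preserves form degree; comparing the degree-$(k+1)$ components of $De^{-t\Delta}\omega=e^{-t\Delta}D\omega$ gives the claim for $\omega\in\mathcal{D}(d_{k,\max,2})\cap\mathcal{D}(d^t_{k-1,\min,2})$, and the same density-plus-closedness argument finishes. In either form, the two facts you must prove rather than assume are the core intertwining and $\mathrm{im}(d_{k-1})\subset\ker(d_k)$ for the matching extension -- and you must keep absolute with maximal and relative with minimal throughout, as you correctly set up.
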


\begin{proof}
See \cite[Prop. XIII.7]{Batu}.
\end{proof}

\subsection{The Kato class}
In this subsection we recall briefly the notion of Kato class and some of its properties.  For an in-depth treatment we refer to \cite[Chap. VI-VII]{Batu} and the reference therein. Let us consider again an open and possibly incomplete Riemannian manifold $(M,g)$ of dimension $m$ and let $f:M\rightarrow \mathbb{R}$ be a measurable function. We say that $f$ belongs to the {\em Kato class} of $(M,g)$, briefly $f\in \mathcal{K}(M)$, if
\begin{equation}
\label{katoclass}
\lim_{t\rightarrow 0^+}\sup_{x\in M}\int_0^t\int_Mp(s,x,y)|f(y)|\dvol_g(y)ds=0
\end{equation}
with $(s,x,y)\in (0,\infty)\times M\times M$ and $p(s,x,y)$ denoting the smooth kernel of the heat operator $e^{-t\Delta_0^{\mathcal{F}}}:L^2(M,g)\rightarrow L^2(M,g)$.  
According to \cite[Lemma VI.3]{Batu} we have 
\begin{equation}
\label{bKato}
L^{\infty}(M)\subset \mathcal{K}(M)\subset L^1_{\mathrm{loc}}(M).
\end{equation}
Moreover if there is a $C>0$ and a $T>0$ such
that for all  $0<t<T$ one has 
$$\sup_{x\in M}p(t,x,x)\leq Ct^{-\frac{m}{2}}$$ then it holds 
\begin{equation}
L^{q'}(M,g)+L^{\infty}(M)\subset \mathcal{K}(M)
\end{equation}
 for any $q'\geq 1$ if $m=1$, and $q'>m/2$ if $m\geq 2$, see \cite[Prop. VI.10 and Cor. VI.13]{Batu}. Let now $\nabla:C^{\infty}(M,TM)\rightarrow C^{\infty}(M,T^*M\otimes TM)$ be the Levi-Civita connection and with a little abuse of notation let us still label by $\nabla:C^{\infty}(M,\Lambda^k(M))\rightarrow C^{\infty}(M,T^*M\otimes \Lambda^k(M))$ the connection on $\Lambda^k(M)$ induced by the Levi-Civita connection. We endow $T^*M\otimes \Lambda^k(M)$ with the natural metric induced by $g$ and we denote by $\nabla^t:C^{\infty}(M,T^*M\otimes \Lambda^k(M))\rightarrow C^{\infty}(M,\Lambda^k(M))$ the formal adjoint of $\nabla:C^{\infty}(M,TM)\rightarrow C^{\infty}(M,T^*M\otimes TM)$. Finally let $\Delta_k:\Omega^k(M)\rightarrow \Omega^k(M)$ be the Hodge Laplacian acting on $k$-forms. 
The well known Weitzenb\"ock formula says that there exists $L_k\in C^{\infty}(M,\mathrm{End}(\Lambda^k(M)))$ such that \begin{equation}
\label{weit}
\Delta_k=\nabla^t\circ \nabla+L_k.
\end{equation} 
Let us define $\ell_k^-:M\rightarrow \mathbb{R}$ as 
\begin{equation}
\label{negativeKato}
\ell_k^-(x):=\max\{-l_k(x),0\}
\end{equation}
with  $$l_k(x):=\inf_{v\in \Lambda^kT_x^*M,\ g(v,v)=1}g(L_{k,x}v,v).$$
Note that $\ell^-_k(x)\geq 0$ for any $x\in M$. Clearly the following inequality holds true: for any $x\in M$ and $v\in \Lambda^kT_x^*M$ we have $$g(L_{k,x}v,v)\geq -\ell^-(x)g(v,v).$$
If $k=1$ it is well known that $L_1=\mathrm{Ric}$ where with a little abuse of notation $\mathrm{Ric}$ denotes  the endomorphism of $T^*M$ induced by $g$ and the Ricci tensor. In this case we will denote $\ell_1^-$ with $\mathrm{ric}^-$.
\begin{defi}
\label{Katodef}
In the setting above we say that the negative part of $L_k$ lies in the Kato class  if $$\ell^-_k(x)\in \mathcal{K}(M).$$
\end{defi}
Note that if $L_k\geq c$ for some $c\in \mathbb{R}$ then $0\leq \ell^-_k\leq |c|$ and thus $\ell_k^-\in \mathcal{K}(M)$ thanks to \eqref{bKato}. We have the following important result:

\begin{teo}
\label{Kato-Simon}
Let $(M,g)$ be an open and possibly incomplete Riemannian manifold of dimension $m$. Let $\Delta_k=\nabla^t\circ \nabla+L_k$ be the Hodge Laplacian acting on $k$-forms. If $\ell_k^-\in \mathcal{K}(M)$ then the following pointwise inequality $$|e^{-t\Delta_k^{\mathcal{F}}}\omega|_g\leq e^{-t\ell^-}e^{-t\Delta_0^{\mathcal{F}}}|\omega|_g$$ holds true for any $\omega\in L^2\Omega^k(M,g)$ and $x\in M$.
\end{teo}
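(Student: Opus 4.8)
The plan is to read the stated estimate as a domination‑of‑semigroups inequality of Kato--Simon type and to deduce it from an abstract form‑domination criterion (Ouhabaz's criterion, see e.g.\ \cite[Ch.~VII]{Batu}) together with Kato's inequality for the Levi‑Civita connection on $\Lambda^k(M)$. First I would describe the two relevant operators through their quadratic forms. By the Weitzenb\"ock identity \eqref{weit} and the factorisation $\Delta_k=D_k^t\circ D_k$ one has, for $\omega\in\Omega_c^k(M)$,
$$\langle\Delta_k\omega,\omega\rangle_{L^2}=\|D_k\omega\|^2=\|\nabla\omega\|^2+\int_M g(L_k\omega,\omega)\,\dvol_g ,$$
so $\Delta_k^{\mathcal{F}}$ is the self‑adjoint operator attached to the closure of the form $\mathfrak{q}_k(\omega):=\|\nabla\omega\|^2+\int_M g(L_k\omega,\omega)\,\dvol_g$ on $\Omega_c^k(M)$; using that $\ell_k^-\in\mathcal{K}(M)$ is infinitesimally form‑bounded with respect to $\Delta_0^{\mathcal{F}}$ on functions, together with the Kato inequality $|\nabla|\omega|_g|\le|\nabla\omega|_g$, one sees that the form domain of $\Delta_k^{\mathcal{F}}$ is contained in $\mathcal{D}(\nabla_{\min,2})$. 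The same form‑boundedness makes $H_0:=\Delta_0^{\mathcal{F}}-\ell_k^-$ (the Schr\"odinger operator with potential $-\ell_k^-$, well defined as a lower‑bounded self‑adjoint operator precisely because $\ell_k^-\in\mathcal{K}(M)$) a self‑adjoint operator with form domain $W_0^{1,2}(M,g)$ and positivity‑preserving semigroup $e^{-tH_0}$, which is what is written $e^{-t\ell_k^-}e^{-t\Delta_0^{\mathcal{F}}}$ on the right‑hand side of the claimed inequality. The theorem thus amounts to the assertion that $e^{-t\Delta_k^{\mathcal{F}}}$ is dominated by $e^{-tH_0}$.

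I would then verify the two hypotheses of the domination criterion. The pointwise/distributional Kato inequality gives, for $\omega\in\mathcal{D}(\nabla_{\min,2})$, that $|\omega|_g\in W_0^{1,2}$ with $\nabla|\omega|_g=\re\,g(\nabla\omega,\omega)/|\omega|_g$ on $\{\omega\ne0\}$, hence $|\nabla|\omega|_g|\le|\nabla\omega|_g$ a.e.; together with the curvature bound $g(L_k\omega,\omega)\ge-\ell_k^-|\omega|_g^2$ this yields: (i) $\omega\in\mathcal{D}(\mathfrak{q}_k)\Rightarrow|\omega|_g\in W_0^{1,2}=\mathcal{D}(\mathfrak{q}_{H_0})$; and (ii) for every such $\omega$ and every $0\le u\in W_0^{1,2}\cap L^\infty(M)$, setting $\operatorname{sgn}\omega:=\omega/|\omega|_g$ on $\{\omega\ne0\}$ and $0$ elsewhere, the section $(\operatorname{sgn}\omega)u$ belongs to $\mathcal{D}(\mathfrak{q}_k)$ and $\mathfrak{q}_{H_0}(|\omega|_g,u)\le\re\,\mathfrak{q}_k(\omega,(\operatorname{sgn}\omega)u)$. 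For (ii) one expands
$$\re\,\mathfrak{q}_k\big(\omega,(\operatorname{sgn}\omega)u\big)=\re\int_M g\big(\nabla\omega,\nabla((\operatorname{sgn}\omega)u)\big)\,\dvol_g+\int_M u\,\frac{g(L_k\omega,\omega)}{|\omega|_g}\,\dvol_g ;$$
the curvature term is $\ge-\int_M u\,\ell_k^-|\omega|_g\,\dvol_g$, and, using $\nabla((\operatorname{sgn}\omega)u)=(du)\otimes\operatorname{sgn}\omega+u\,\nabla\operatorname{sgn}\omega$ together with the identities $\re\,g(\nabla\omega,(du)\otimes\operatorname{sgn}\omega)=g(\nabla|\omega|_g,\nabla u)$ and $\re\,g(\nabla\omega,u\,\nabla\operatorname{sgn}\omega)=\tfrac{u}{|\omega|_g}\big(|\nabla\omega|_g^2-|\nabla|\omega|_g|^2\big)\ge0$, one obtains $\re\int_M g(\nabla\omega,\nabla((\operatorname{sgn}\omega)u))\,\dvol_g\ge\int_M g(\nabla|\omega|_g,\nabla u)\,\dvol_g$. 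Summing the two contributions gives (ii), and the criterion then produces $|e^{-t\Delta_k^{\mathcal{F}}}\omega|_g\le e^{-tH_0}|\omega|_g$ a.e.\ for all $\omega\in L^2\Omega^k(M,g)$ and $t\ge0$, which is the assertion.

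The step that requires genuine care is the behaviour on the zero set $\{\omega=0\}$, where $\operatorname{sgn}\omega$ and $\nabla\operatorname{sgn}\omega$ are undefined: the computation above has to be carried out with the regularisations $\phi_\varepsilon:=\sqrt{|\omega|_g^2+\varepsilon^2}-\varepsilon$ and $\operatorname{sgn}_\varepsilon\omega:=\omega/\sqrt{|\omega|_g^2+\varepsilon^2}$, one must check that $\phi_\varepsilon\in W_0^{1,2}$ and that $(\operatorname{sgn}_\varepsilon\omega)u$ lies in the form domain of $\Delta_k^{\mathcal{F}}$, prove the $\varepsilon$‑version of (ii) (which now carries an error term bounded by a constant times $\varepsilon\,\ell_k^-$), and finally pass to the limit $\varepsilon\to0^+$; here the Kato‑class integrability $\ell_k^-\in L^1_{\mathrm{loc}}(M)$ from \eqref{bKato} is exactly what kills the error. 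An equivalent and perhaps more transparent route is to apply the same Kato‑inequality computation directly to $u_t:=e^{-t\Delta_k^{\mathcal{F}}}\omega$ for $t>0$ (smooth by elliptic regularity), obtaining $(\partial_t+\Delta_0-\ell_k^-)\phi_\varepsilon(u_t)\le\mathrm{const}\cdot\varepsilon\,\ell_k^-$ in the distributional sense, so that $\phi_\varepsilon(u_t)$ is an approximate subsolution of the heat equation attached to $H_0$, and to conclude via the parabolic comparison principle for the Friedrichs (Dirichlet) heat semigroup $e^{-tH_0}$ before letting $\varepsilon\to0$. I expect this regularisation and passage to the limit, rather than any of the pointwise identities, to be the main technical obstacle.
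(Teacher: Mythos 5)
Your proposal is correct in substance, but it is not the paper's route: the paper offers no direct argument at all and simply quotes the general Kato--Simon inequality for covariant Schr\"odinger semigroups with Kato-class potentials, \cite[Th. VII.8]{Batu}, where the estimate is obtained inside the probabilistic framework of that monograph (covariant Feynman--Kac formula and stochastic parallel transport, combined with the pointwise bound $L_k\ge -\ell_k^-$), not by form methods. What you do instead is the classical analytic proof in the spirit of Hess--Schrader--Uhlenbrock/Simon/Ouhabaz: identify $\Delta_k^{\mathcal F}$ with the operator of the closure of $\|\nabla\omega\|^2+\int_M g(L_k\omega,\omega)\,\dvol_g$ on $\Omega^k_c(M)$ via Weitzenb\"ock, identify the right-hand side with the semigroup of the scalar form sum $H_0=\Delta_0^{\mathcal F}-\ell_k^-$ (meaningful because Kato class gives infinitesimal form boundedness), and verify Ouhabaz's domination criterion using Kato's inequality and $g(L_k\omega,\omega)\ge-\ell_k^-|\omega|_g^2$, with the $\sqrt{|\omega|_g^2+\varepsilon^2}$ regularisation handling the zero set of $\omega$. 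Your reading of the right-hand side as $e^{-tH_0}|\omega|_g$ is indeed the intended one (it is what \cite[Th. VII.8]{Batu} provides and what is used afterwards through \cite[Cor. IX.4]{Batu}); read literally as the multiplication operator $e^{-t\ell_k^-}$ composed with $e^{-t\Delta_0^{\mathcal F}}$ the inequality would even be false in general (a nonzero $L^2$-harmonic one-form on a finite-volume hyperbolic surface gives a $t$-independent left side while that right side tends to zero), so this identification is part of the statement rather than a convenience. The trade-off between the two routes is clear: the citation is short and outsources exactly the points you flag as delicate (form boundedness of the singular potential, the regularisation near $\{\omega=0\}$, the precise hypotheses of the domination criterion, and upgrading the a.e.\ inequality to a pointwise one via continuity of both sides for $t>0$), whereas your argument is self-contained, purely variational and probability-free, but would need those details written out in full to count as a complete proof.
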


\begin{proof}
This is a particular case of a more general result, indeed a Kato-Simon inequality for a large class of Schr\"odinger operators, proved in \cite[Th. VII.8]{Batu}.
\end{proof}

With the next result we recall some $L^q\rightarrow L^q$ and $L^p\rightarrow L^q$ properties of the heat operator $e^{-t\Delta_k^{\mathcal{F}}}$ when $\ell^-_k\in \mathcal{K}(M)$. 

\begin{teo}
\label{LpLqKato}
Let $(M,g)$ be an open and possibly incomplete Riemannian manifold of dimension $m$ such that $\ell_k^-\in \mathcal{K}(M)$.
\begin{enumerate}
\item For any $\delta>1$ there exists $c=c(\delta,\ell^-_k)\geq 0$ such that for all $t\geq 0$, $q\in [1,\infty]$ and $\omega\in L^2\Omega^k(M,g)\cap L^q\Omega^k(M,g)$ one has  $e^{-t\Delta_k^{\mathcal{F}}}\omega\in L^q\Omega^k(M,g)$ and 
\begin{equation}
\label{klqlq}
\|e^{-t\Delta_k^{\mathcal{F}}}\omega\|_{L^q\Omega^k(M,g)}\leq \delta e^{ct}\|\omega\|_{L^q\Omega^k(M,g)}.
\end{equation}
\item If in addition $m>2$ and we have a Sobolev embedding $$W^{1,2}_0(M,g)\hookrightarrow L^{\frac{2m}{m-2}}(M,g)$$ then there exist positive  constants $c_1$ and $c_2$ such that for any $1\leq p\leq q\leq \infty$,  $\omega\in L^2\Omega^k(M,g)\cap L^p\Omega^k(M,g)$ and $0<t\leq 1$ one has  $e^{-t\Delta_k^{\mathcal{F}}}\omega\in L^q\Omega^k(M,g)$ and 
\begin{equation}
\label{klplq}
\|e^{-t\Delta_k^{\mathcal{F}}}\omega\|_{L^q\Omega^k(M,g)}\leq c_1e^{c_2t}t^{-\frac{m}{2}(\frac{1}{p}-\frac{1}{q})}\|\omega\|_{L^p\Omega^k(M,g)}.
\end{equation}
\end{enumerate}
\end{teo}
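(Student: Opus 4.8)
The plan is to prove Theorem \ref{LpLqKato} in two steps, corresponding to its two items, both relying on the Kato--Simon inequality (Th. \ref{Kato-Simon}) which reduces estimates for $e^{-t\Delta_k^{\mathcal F}}$ acting on $k$-forms to estimates for the scalar heat semigroup $e^{-t\Delta_0^{\mathcal F}}$ acting on $|\omega|_g$. Indeed, for $\omega\in L^2\Omega^k(M,g)$ one has the pointwise bound $|e^{-t\Delta_k^{\mathcal F}}\omega|_g\leq e^{-t\ell_k^-}e^{-t\Delta_0^{\mathcal F}}|\omega|_g\leq e^{t\,\|\ell_k^-\|}\,e^{-t\Delta_0^{\mathcal F}}|\omega|_g$ when $\ell_k^-\in L^\infty$; but in general $\ell_k^-$ is only in the Kato class and not bounded, so the factor $e^{-t\ell_k^-}$ cannot simply be pulled out. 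The standard device (as in \cite[Chap. VII]{Batu}) is to introduce the Schr\"odinger semigroup $e^{-t(\Delta_0^{\mathcal F}+\ell_k^-)}$ on functions, which by the Kato condition is a strongly continuous semigroup on every $L^q$, $1\le q\le\infty$, with $\|e^{-t(\Delta_0^{\mathcal F}+\ell_k^-)}\|_{q\to q}\le \delta e^{ct}$ for a constant $c=c(\delta,\ell_k^-)$ — this is precisely the content of the Kato-class perturbation theory (boundedness of the Feynman--Kac-type semigroup follows from $\sup_x\int_0^t\!\!\int_M p(s,x,y)|\ell_k^-(y)|\,dy\,ds\to 0$). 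The pointwise Kato--Simon inequality then gives $|e^{-t\Delta_k^{\mathcal F}}\omega|_g\le e^{-t(\Delta_0^{\mathcal F}+\ell_k^-)}|\omega|_g$ a.e.

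For item (1), I would fix $\delta>1$, take the constant $c=c(\delta,\ell_k^-)$ coming from the $L^q\to L^q$ bound on the scalar Schr\"odinger semigroup $e^{-t(\Delta_0^{\mathcal F}+\ell_k^-)}$, and then for $\omega\in L^2\Omega^k\cap L^q\Omega^k$ estimate, using monotonicity of $L^q$-norms under pointwise domination of nonnegative functions,
\[
\|e^{-t\Delta_k^{\mathcal F}}\omega\|_{L^q\Omega^k(M,g)}=\big\||e^{-t\Delta_k^{\mathcal F}}\omega|_g\big\|_{L^q(M,g)}\le \big\|e^{-t(\Delta_0^{\mathcal F}+\ell_k^-)}|\omega|_g\big\|_{L^q(M,g)}\le \delta e^{ct}\big\||\omega|_g\big\|_{L^q(M,g)}=\delta e^{ct}\|\omega\|_{L^q\Omega^k(M,g)} .
\]
The case $t=0$ is trivial since $e^{-0\cdot\Delta_k^{\mathcal F}}=\id$. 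This also shows $e^{-t\Delta_k^{\mathcal F}}\omega\in L^q\Omega^k(M,g)$ in the first place. One should note that $|\omega|_g\in L^2(M,g)\cap L^q(M,g)$ whenever $\omega\in L^2\Omega^k(M,g)\cap L^q\Omega^k(M,g)$, so that the right-hand side genuinely makes sense and the scalar semigroup bound applies; the case $q=\infty$ is included by the same argument since the Kato bound on the scalar semigroup holds for $q=\infty$ too.

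For item (2), the extra hypothesis is the Sobolev embedding $W^{1,2}_0(M,g)\hookrightarrow L^{\frac{2m}{m-2}}(M,g)$ with $m>2$; this is equivalent (Nash/Varopoulos-type theory) to the on-diagonal heat-kernel bound $\sup_x p(t,x,x)\le Ct^{-m/2}$ for small $t$, hence to the ultracontractivity estimate $\|e^{-t\Delta_0^{\mathcal F}}\|_{1\to\infty}\le C t^{-m/2}$ for $0<t\le 1$. The Kato-class perturbation argument (again \cite[Chap. VII]{Batu}) upgrades this to $\|e^{-t(\Delta_0^{\mathcal F}+\ell_k^-)}\|_{1\to\infty}\le c_1 e^{c_2 t} t^{-m/2}$ for $0<t\le 1$: one writes the perturbed semigroup via the Duhamel/Dyson--Phillips series and controls each term using the Kato norm, or invokes the general principle that Kato-class potentials preserve ultracontractivity bounds up to an exponential factor. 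Interpolating this $L^1\to L^\infty$ bound against the trivial $L^q\to L^q$ bound from item (1) (Riesz--Thorin) yields $\|e^{-t(\Delta_0^{\mathcal F}+\ell_k^-)}\|_{p\to q}\le c_1 e^{c_2 t} t^{-\frac m2(\frac1p-\frac1q)}$ for all $1\le p\le q\le\infty$ and $0<t\le 1$, after adjusting constants. Then, exactly as in item (1), the Kato--Simon inequality transfers this bound to $k$-forms:
\[
\|e^{-t\Delta_k^{\mathcal F}}\omega\|_{L^q\Omega^k(M,g)}\le \big\|e^{-t(\Delta_0^{\mathcal F}+\ell_k^-)}|\omega|_g\big\|_{L^q(M,g)}\le c_1 e^{c_2 t} t^{-\frac m2(\frac1p-\frac1q)}\,\|\omega\|_{L^p\Omega^k(M,g)}
\]
for $\omega\in L^2\Omega^k\cap L^p\Omega^k$ and $0<t\le 1$, which is \eqref{klplq}.

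The main obstacle, and the only genuinely non-routine point, is the passage from "$\ell_k^-$ in the Kato class" to the quantitative semigroup bounds for $e^{-t(\Delta_0^{\mathcal F}+\ell_k^-)}$ — namely the $L^q\to L^q$ bound with the $\delta e^{ct}$ form in item (1) and the preservation of ultracontractivity with an exponential correction in item (2). Both of these are exactly the kind of statement established in \cite[Chap. VI--VII]{Batu} for Schr\"odinger operators with Kato-class potentials (the relevant inputs being \cite[Th. VII.8]{Batu} for the pointwise domination and the $L^p$-smoothing/boundedness results recalled around \cite[Prop. VI.10, Cor. VI.13]{Batu}), so in the write-up I would cite those results rather than reproving the perturbation theory; everything else is a bookkeeping argument combining pointwise domination with monotonicity of norms and Riesz--Thorin interpolation.
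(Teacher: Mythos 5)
Your proposal is correct in substance and follows essentially the same route as the paper: the paper's proof is pure citation --- \cite[Cor. IX.4]{Batu} for item (1), and for item (2) the conversion of the Sobolev embedding into the heat kernel bound $p(t,x,y)\leq Ct^{-m/2}$, $0<t\leq 1$, via \cite[Th. 6.3.1]{Gentil} followed by \cite[Th. IX.2 and Lemma VI.8]{Batu} --- and the cited covariant Schr\"odinger semigroup results are themselves established exactly by the mechanism you sketch: pointwise domination in the spirit of Th. \ref{Kato-Simon} combined with the scalar Kato-class perturbation theory (Khas'minskii-type bound giving $\delta e^{ct}$ on every $L^q$, preservation of ultracontractivity up to an exponential factor, Riesz--Thorin interpolation), then transfer to $k$-forms.

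One point must be fixed in the write-up: the dominating scalar semigroup is the one generated by $\Delta_0^{\mathcal{F}}-\ell_k^-$, i.e. the Schr\"odinger operator with the \emph{negative} potential $-\ell_k^-$, since the Weitzenb\"ock term satisfies $L_k\geq -\ell_k^-$; this is also the correct reading of the shorthand $e^{-t\ell^-}e^{-t\Delta_0^{\mathcal{F}}}$ appearing in Th. \ref{Kato-Simon}. As literally written, with $e^{-t(\Delta_0^{\mathcal{F}}+\ell_k^-)}$, the pointwise domination of $e^{-t\Delta_k^{\mathcal{F}}}$ fails in general, and the Kato hypothesis would be pointless there, because the semigroup with nonnegative potential $+\ell_k^-$ is dominated by $e^{-t\Delta_0^{\mathcal{F}}}$ and is a contraction on every $L^q$. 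Your surrounding discussion --- the exponential factor controlled by $\sup_{x\in M}\int_0^t\int_M p(s,x,y)\,\ell_k^-(y)\,\dvol_g(y)\,ds$, the Feynman--Kac/Duhamel argument, the $\delta e^{ct}$ growth, and the need for the Kato class precisely to tame the negative part --- makes clear you have the right operator in mind, so this is a sign slip rather than a gap, but with the potential $-\ell_k^-$ one should also note (or cite from \cite[Chap. VI--VII]{Batu}) that the Kato condition makes this potential infinitesimally form-bounded, so that the perturbed semigroup is well defined before the quantitative bounds are invoked.
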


\begin{proof}
For the first property see \cite[Cor. IX.4]{Batu}. Concerning the second property we first recall that a Sobolev embedding $W^{1,2}_0(M,g)\hookrightarrow L^{\frac{2m}{m-2}}(M,g)$ is equivalent to saying that there exists a positive constant $C$ such that $p(t,x,y)$, the kernel of $e^{-t\Delta_0^{\mathcal{F}}}:L^2(M,g)\rightarrow L^2(M,g)$, satisfies  $p(t,x,y)\leq Ct^{-m/2}$ for any $(x,y)\in M\times M$ and $0<t\leq1$, see \cite[Th. 6.3.1]{Gentil}. Now the conclusion follows by \cite[Th. IX.2 and Lemma VI.8]{Batu}. 
\end{proof}

\section{Some applications of the heat operator}
\subsection{Heat operator and $L^p$-cohomology}
We start with the following 
\begin{prop}
\label{tozzi}
Let $(M,g)$ be an open and possibly incomplete Riemannian manifold of dimension $m>2$. Assume that
\begin{itemize}
\item We have a continuous inclusion 
\begin{equation}
\label{imagine}
W^{1,2}_0(M,g)\hookrightarrow L^{\frac{2m}{m-2}}(M,g);
\end{equation}
\item There exists $k\in \{0,...,m\}$ such that $\ell^-_k\in \mathcal{K}(M)$.
\end{itemize}
 Then we have a continuous inclusion 
$$ \ker(\Delta_k^{\mathcal{F}})\hookrightarrow L^{q}\Omega^k(M,g) $$
 for each $q\in [2,\infty]$.
\end{prop}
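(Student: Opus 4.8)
The whole proposition should come out as a one-line consequence of Theorem \ref{LpLqKato}(2) once we observe that harmonic forms are fixed points of the heat semigroup. So the plan is: first handle the spectral-theoretic fact, then plug into the smoothing estimate, then read off continuity.

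\emph{Step 1: Harmonic forms are heat-invariant.} Let $\omega\in\ker(\Delta_k^{\mathcal{F}})$. Since $\Delta_k^{\mathcal{F}}$ is a non-negative self-adjoint operator on $L^2\Omega^k(M,g)$ and $\omega$ lies in its kernel, i.e.\ in the spectral subspace for the eigenvalue $0$, the functional calculus gives $e^{-t\Delta_k^{\mathcal{F}}}\omega=e^{-t\cdot 0}\omega=\omega$ for every $t\geq 0$. This is the only input needed beyond Theorem \ref{LpLqKato}.

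\emph{Step 2: Apply the $L^2\to L^q$ smoothing estimate.} Both hypotheses of Theorem \ref{LpLqKato}(2) are exactly the two bullets assumed here: $m>2$, the Sobolev embedding $W^{1,2}_0(M,g)\hookrightarrow L^{\frac{2m}{m-2}}(M,g)$, and $\ell_k^-\in\mathcal{K}(M)$. Fix $q\in[2,\infty]$ and apply that theorem with $p=2$. Since $\omega\in L^2\Omega^k(M,g)$ (trivially $\omega\in L^2\Omega^k(M,g)\cap L^2\Omega^k(M,g)$), we get, for every $0<t\leq 1$,
$$e^{-t\Delta_k^{\mathcal{F}}}\omega\in L^q\Omega^k(M,g),\qquad \|e^{-t\Delta_k^{\mathcal{F}}}\omega\|_{L^q\Omega^k(M,g)}\leq c_1 e^{c_2 t}\,t^{-\frac{m}{2}\left(\frac{1}{2}-\frac{1}{q}\right)}\|\omega\|_{L^2\Omega^k(M,g)},$$
with $c_1,c_2>0$ independent of $\omega$ and $t$.

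\emph{Step 3: Conclude.} Combining Steps 1 and 2, for any $0<t\leq 1$ we have $\omega=e^{-t\Delta_k^{\mathcal{F}}}\omega\in L^q\Omega^k(M,g)$, hence $\ker(\Delta_k^{\mathcal{F}})\subset L^q\Omega^k(M,g)$. Evaluating the above estimate at $t=1$ yields
$$\|\omega\|_{L^q\Omega^k(M,g)}\leq c_1 e^{c_2}\,\|\omega\|_{L^2\Omega^k(M,g)}$$
for all $\omega\in\ker(\Delta_k^{\mathcal{F}})$, which is precisely the continuity of the inclusion $\ker(\Delta_k^{\mathcal{F}})\hookrightarrow L^q\Omega^k(M,g)$, where $\ker(\Delta_k^{\mathcal{F}})$ carries the $L^2$-topology as a closed subspace of $L^2\Omega^k(M,g)$. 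Since $q\in[2,\infty]$ was arbitrary, the claim follows.

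\emph{On the difficulty.} There is essentially no obstacle: all the analytic weight — the on-diagonal heat kernel bound encoded by the Sobolev embedding, together with the Kato-class control on the Weitzenböck curvature term needed to compare $e^{-t\Delta_k^{\mathcal{F}}}$ with $e^{-t\Delta_0^{\mathcal{F}}}$ via the Kato–Simon inequality — has already been absorbed into Theorem \ref{LpLqKato}. The only thing to be careful about is keeping the constants independent of $\omega$ (which they are, by the statement of that theorem) and noting that the endpoint $q=\infty$ is included there as well.
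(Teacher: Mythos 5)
Your proof is correct and follows essentially the same route as the paper: harmonic forms are fixed by the heat semigroup, so the $L^2\to L^q$ smoothing bound \eqref{klplq} with $p=2$ applied at a fixed $t\in(0,1]$ gives both membership in $L^q\Omega^k(M,g)$ and the continuity of the inclusion. Your explicit spectral-calculus justification of $e^{-t\Delta_k^{\mathcal{F}}}\omega=\omega$ and the choice $t=1$ for the constant are just slightly more detailed versions of what the paper leaves implicit.
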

\begin{proof}
Let $\omega\in \ker(\Delta_k^{\mathcal{F}})$. We have $\omega=e^{-t\Delta^{\mathcal{F}}_k}\omega$ and thus, thanks to \eqref{klplq}, we have $$\|\omega\|_{L^q\Omega^k(M,g)}=\|e^{-t\Delta_k^{\mathcal{F}}}\omega\|_{L^q\Omega^k(M,g)}\leq c_1e^{c_2t}t^{-\frac{m}{2}(\frac{1}{2}-\frac{1}{q})}\|\omega\|_{L^2\Omega^k(M,g)}$$ for any $0<t\leq 1$.
We can thus conclude that if $\omega\in \ker(\Delta_k^{\mathcal{F}})$ then $\omega\in L^q\Omega^k(M,g)$ and the resulting inclusion $$\ker(\Delta_k^{\mathcal{F}})\hookrightarrow L^{q}\Omega^k(M,g)$$ is continuous. 
\end{proof}

\begin{teo}
\label{tozziti}
In the setting of Prop. \ref{tozzi}. Assume in addition that $(M,g)$ is incomplete with  finite volume and the operator 
$$\Delta_{k,\mathrm{abs}}:L^2\Omega^k(M,g)\rightarrow L^2\Omega^{k}(M,g)\quad\quad equals\quad\quad  \Delta_{k}^{\mathcal{F}}:L^2\Omega^k(M,g)\rightarrow L^2\Omega^{k}(M,g).$$
Then for any $2\leq p\leq \infty$ we have a continuous inclusion 
\begin{equation}
\label{thanks}
\mathcal{H}^k_{2,\mathrm{abs}}(M,g)\hookrightarrow L^{p}\Omega^k(M,g).
\end{equation}
Moreover the above inclusion induces  an injective linear map $$\beta_p:\overline{H}^k_{2,\max}(M,g)\rightarrow \overline{H}^k_{p,\max}(M,g)$$ for each $2\leq p\leq \infty$ and an injective linear map $$\alpha_{\infty}:\overline{H}^k_{2,\max}(M,g)\rightarrow \im(H^k_{s,\infty}(M,g)\rightarrow\overline{H}^k_{\infty,\max}(M,g)).$$ 
\end{teo}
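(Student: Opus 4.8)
The argument has two parts: the inclusion \eqref{thanks}, which will be essentially immediate, and the construction and injectivity of $\beta_p$ and $\alpha_{\infty}$, which is the substance. First I would record that, since by hypothesis $\Delta_{k,\mathrm{abs}}=\Delta_k^{\mathcal{F}}$, one has $\mathcal{H}^k_{2,\mathrm{abs}}(M,g)=\ker(\Delta_{k,\mathrm{abs}})=\ker(\Delta_k^{\mathcal{F}})$; hence \eqref{thanks} is exactly Prop. \ref{tozzi} applied with $q=p$. I would also recall, from the Hodge decomposition $L^2\Omega^k(M,g)=\mathcal{H}^k_{2,\mathrm{abs}}(M,g)\oplus\overline{\im(d_{k-1,\max,2})}\oplus\overline{\im(d^t_{k,\min,2})}$, that $\ker(d_{k,\max,2})=\mathcal{H}^k_{2,\mathrm{abs}}(M,g)\oplus\overline{\im(d_{k-1,\max,2})}$, so that \eqref{isoabs} holds and every class of $\overline{H}^k_{2,\max}(M,g)$ has a unique representative $\omega\in\mathcal{H}^k_{2,\mathrm{abs}}(M,g)$; such an $\omega$ satisfies $d_{k,\max,2}\omega=0$ and $d^t_{k-1,\min,2}\omega=0$, and since $\Delta_k\omega=0$ holds in the distributional sense it is smooth by elliptic regularity.

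To build $\beta_p$ for fixed $2\le p\le\infty$: given $\omega\in\mathcal{H}^k_{2,\mathrm{abs}}(M,g)$, \eqref{thanks} puts $\omega$ in $L^p\Omega^k(M,g)$, and since $\omega$ is smooth and closed we have $d_k\omega=0\in L^p\Omega^{k+1}(M,g)$, so $\omega\in\mathcal{D}(d_{k,\max,p})$ with $d_{k,\max,p}\omega=0$ (for $p=\infty$ this is the definition of $\mathcal{D}(d_{k,\max,\infty})$). Sending $[\omega]\in\overline{H}^k_{2,\max}(M,g)$ to the class of this same form in $\overline{H}^k_{p,\max}(M,g)$ defines $\beta_p$; it is linear and well defined because the harmonic representative furnished by \eqref{isoabs} is unique.

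For injectivity of $\beta_p$: if $\beta_p([\omega])=0$ with $\omega\in\mathcal{H}^k_{2,\mathrm{abs}}(M,g)$, then $\omega\in\overline{\im(d_{k-1,\max,p})}$, so there are $\eta_n\in\mathcal{D}(d_{k-1,\max,p})$ with $d_{k-1,\max,p}\eta_n\to\omega$ in $L^p\Omega^k(M,g)$. Because $\vol_g(M)<\infty$ and $p\ge 2$, both $\eta_n$ and $d_{k-1}\eta_n=d_{k-1,\max,p}\eta_n$ lie in the corresponding $L^2$-spaces, so by the very definition of the maximal extension $\eta_n\in\mathcal{D}(d_{k-1,\max,2})$ with the same differential, and the convergence persists in $L^2\Omega^k(M,g)$ by continuity of $L^p\Omega^k(M,g)\hookrightarrow L^2\Omega^k(M,g)$. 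Hence $\omega\in\overline{\im(d_{k-1,\max,2})}$; but $\omega\in\mathcal{H}^k_{2,\mathrm{abs}}(M,g)$ is $L^2$-orthogonal to $\overline{\im(d_{k-1,\max,2})}$ by the decomposition above, so $\omega=0$ and $[\omega]=0$. This is the crux of the proof; the only delicate point is the legitimacy of moving $\eta_n$ between $\mathcal{D}(d_{k-1,\max,p})$ and $\mathcal{D}(d_{k-1,\max,2})$, which rests entirely on the finite-volume hypothesis.

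Finally, for $\alpha_{\infty}$: the harmonic representative $\omega$ is smooth and, by \eqref{thanks} with $p=\infty$, bounded, and $d_k\omega=0$, so $\omega\in\Omega^k_{s,\infty}(M,g)$ is closed in the complex \eqref{smooth} and defines $[\omega]_{s,\infty}\in H^k_{s,\infty}(M,g)$. The inclusion of complexes $\Omega^{\bullet}_{s,\infty}(M,g)\hookrightarrow(\mathcal{D}(d_{\bullet,\max,\infty}),d_{\bullet,\max,\infty})$ induces a natural map $H^k_{s,\infty}(M,g)\to\overline{H}^k_{\infty,\max}(M,g)$ (an exact form $d_{k-1}\zeta$, $\zeta\in\Omega^{k-1}_{s,\infty}(M,g)$, lies in $\im(d_{k-1,\max,\infty})$ and hence maps to $0$), and I would set $\alpha_{\infty}([\omega])$ to be the image of $[\omega]_{s,\infty}$ under it, composed with \eqref{isoabs}; by construction $\alpha_{\infty}$ takes values in $\im(H^k_{s,\infty}(M,g)\to\overline{H}^k_{\infty,\max}(M,g))$. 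Vanishing of $\alpha_{\infty}([\omega])$ means $\omega\in\overline{\im(d_{k-1,\max,\infty})}$, and then the argument of the previous paragraph with $p=\infty$ (using that finite volume makes $L^{\infty}$-convergence force $L^2$-convergence) forces $\omega=0$. Thus $\alpha_{\infty}$ is injective, which concludes the proof.
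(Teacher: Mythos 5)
Your proposal is correct and follows essentially the same route as the paper: the inclusion \eqref{thanks} via Prop.~\ref{tozzi} and the identification $\mathcal{H}^k_{2,\mathrm{abs}}(M,g)=\ker(\Delta_k^{\mathcal{F}})$, the definition of $\beta_p$ and $\alpha_\infty$ through the unique harmonic representative, and injectivity by transporting an approximating sequence from $L^p$ (resp. $L^\infty$) to $L^2$ using $\vol_g(M)<\infty$ and then invoking the orthogonality $\mathcal{H}^k_{2,\mathrm{abs}}(M,g)\cap\overline{\im(d_{k-1,\max,2})}=\{0\}$. The only difference is that you spell out a few routine points (elliptic regularity, well-definedness of the map $H^k_{s,\infty}(M,g)\rightarrow\overline{H}^k_{\infty,\max}(M,g)$) that the paper treats as immediate.
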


\begin{proof}
	Since we required $\Delta_k^{\mathcal{F}}=\Delta_{k,\mathrm{abs}}$ we have a continuous inclusion $\mathcal{H}^k_{2,\mathrm{abs}}(M,g)\hookrightarrow L^{p}\Omega^k(M,g)$ with $2\leq p\leq \infty$ thanks to Prop. \ref{tozzi}. Consider now a class $[\omega]\in \overline{H}^k_{2,\max}(M,g)$ and let $\omega$ be the unique representative of $[\omega]$ lying in $\mathcal{H}^k_{2,\mathrm{abs}}(M,g)$. Thanks to \eqref{thanks} we know that $\omega\in \ker(d_{k,\max,p})$ for each $2\leq p\leq \infty$. Thus we define $\beta_{p}([\omega])$ as the class induced by $\omega$, the unique harmonic representative of $[\omega]$, in  $\overline{H}^k_{p,\max}(M,g)$. Clearly  $\beta_p$ is well defined and linear. In order to verify that $\beta_p$ is injective,  assume that $[\omega]=0$ in $\overline{H}^k_{p,\max}(M,g)$, i.e., that $\omega \in \overline{\mathrm{Im}(d_{k-1,\max,2})}$, and let us consider a sequence $\{\phi_n\}_{n\in \mathbb{N}}\subset \mathcal{D}(d_{k-1,\max,p})\subset L^{p}\Omega^{k-1}(M,g)$ such that $d_{k-1,\max,p}\phi_n\rightarrow \omega$ in $L^{p}\Omega^{k}(M,g)$ as $n\rightarrow \infty$. Since $\vol_g(M)<\infty$ we have $\{\phi_n\}_{n\in \mathbb{N}}\subset \mathcal{D}(d_{k-1,\max,2})\subset L^{2}\Omega^{k-1}(M,g)$ and $d_{k-1,\max,2}\phi_n\rightarrow \omega$ in $L^{2}\Omega^{k}(M,g)$. This implies that $\omega=0$ as $\omega\in \mathcal{H}^k_{2,\mathrm{abs}}(M,g)\cap \overline{\mathrm{Im}(d_{k-1,\max,2})}$ and so we can conclude that $\beta_p$ is injective.  Concerning the map $\alpha_{\infty}$ let us consider again  a class $[\omega]\in \overline{H}^k_{2,\max}(M,g)$ and let $\omega$ be the unique representative of $[\omega]$ lying in $\mathcal{H}^k_{2,\mathrm{abs}}(M,g)$. Clearly $\omega$ is smooth and closed and thanks to Prop. \ref{tozzi} we know that $\omega\in L^{\infty}\Omega^k(M,g)$. Thus $\omega\in \ker(d_{k,\max,\infty})\cap \Omega^k(M)$. So we define $\alpha_{\infty}([\omega])$ as the class induced by $\omega$, the unique harmonic representative of $[\omega]$, in  $\im(H^k_{s,\infty}(M,g)\rightarrow\overline{H}^k_{\infty,\max}(M,g))$. It is immediate to check that $\alpha_{\infty}$ is well defined and linear. In order to verify that $\alpha_{\infty}$ is injective we first note that $$\im(H^k_{s,\infty}(M,g)\rightarrow\overline{H}^k_{\infty,\max}(M,g))=(\ker(d_{k,\max,\infty})\cap \Omega^k(M))/(\overline{\mathrm{Im}(d_{k-1,\max,\infty})}\cap \Omega^k_{s,\infty}(M)).$$ Likewise the previous case let us assume that there exists a sequence $\{\psi_n\}_{n\in \mathbb{N}}\subset \mathcal{D}(d_{k-1,\max,\infty})\subset L^{\infty}\Omega^{k-1}(M,g)$ such that $d_{k-1,\max,\infty}\psi_n\rightarrow \omega$ in $L^{\infty}\Omega^{k}(M,g)$ as $n\rightarrow \infty$. Since $\vol_g(M)<\infty$ we have $\{\psi_n\}_{n\in \mathbb{N}}\subset \mathcal{D}(d_{k-1,\max,2})\subset L^{2}\Omega^{k-1}(M,g)$ and $d_{k-1,\max,2}\psi_n\rightarrow \omega$ in $L^{2}\Omega^{k}(M,g)$. Therefore $\omega\in \mathcal{H}^k_{2,\mathrm{abs}}(M,g)\cap \overline{\mathrm{Im}(d_{k-1,\max,2})}$ and thus $\omega=0$ as $\omega\in \mathcal{H}^k_{2,\mathrm{abs}}(M,g)\cap \overline{\mathrm{Im}(d_{k-1,\max,2})}=\{0\}$. This shows that $\alpha_{\infty}$ is injective.
\end{proof}

\begin{rem}
\label{car}
 If $(M,g)$ is non-compact, complete and supports a Sobolev embedding as \eqref{imagine} then $\vol_g(M)$ is not finite, see \cite[Th. 4.1.2]{QZhang} This is why we required $(M,g)$ to be incomplete in the statement of the above theorem.
\end{rem}

Note that without any further hypothesis on $(M,g)$ the requirement $\Delta_{k,\mathrm{abs}}=\Delta_{k}^{\mathcal{F}}$ implies $d_{k,\max}=d_{k,\min}$, see Prop. \ref{oneto}, which in turn implies $H^{k}_{2,\max}(M,g)=\im(H^k_{2,\min}(M,g)\rightarrow H^k_{2,\max}(M,g))$ and $\overline{H}^{k}_{2,\max}(M,g)=\im(\overline{H}^k_{2,\min}(M,g)\rightarrow \overline{H}^k_{2,\max}(M,g))$ where on the right hand side of  the last two equalities we have the maps induced by the inclusion of domains $\mathcal{D}(d_{k,\min,2})\hookrightarrow \mathcal{D}(d_{k,\max,2})$.
In the next corollary we will see how  to strengthen the assumptions of Th. \ref{tozziti} in order to compare $\mathcal{H}^k_{2,\mathrm{abs}}(M,g)$ with  $\im(\overline{H}^k_{q,\min}(M,g)\rightarrow \overline{H}^k_{q,\max}(M,g))$.

\begin{cor}
In the setting of Th. \ref{tozziti}. Assume in addition that $(M,g)$ is $p$-parabolic for some $2< p<\infty$. Then for any $q\in (2,p]$ and  $\omega \in \mathcal{H}_{2,\mathrm{abs}}^k(M,g)$ we have  $\beta_q(\omega)\in \im(\overline{H}^k_{q,\min}(M,g)\rightarrow\overline{H}^k_{q,\max}(M,g))$ and the corresponding map $$\beta_q:\mathcal{H}^k_{2,\mathrm{abs}}(M,g)\rightarrow \im(\overline{H}^k_{q,\min}(M,g)\rightarrow\overline{H}^k_{q,\max}(M,g))$$ is injective.
\end{cor}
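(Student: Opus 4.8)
The plan is to upgrade the conclusion of Th. \ref{tozziti} using $p$-parabolicity, which provides the extra leverage needed to pass from the maximal de Rham complex to the minimal one. First I would recall from Th. \ref{tozziti} that, under the present hypotheses, every $\omega\in\mathcal{H}^k_{2,\mathrm{abs}}(M,g)$ lies in $L^q\Omega^k(M,g)$ for all $q\in[2,\infty]$, is smooth, and satisfies $d_{k,\max,q}\omega=0$; moreover the class it represents in $\overline{H}^k_{q,\max}(M,g)$ is exactly $\beta_q(\omega)$, and $\beta_q$ is already known to be injective into $\overline{H}^k_{q,\max}(M,g)$. So the entire content to be added is that $\omega$ actually defines a class in $\im(\overline{H}^k_{q,\min}(M,g)\to\overline{H}^k_{q,\max}(M,g))$, i.e. that $\omega$ can be approximated in $L^q\Omega^k(M,g)$ by forms lying in $\mathcal{D}(d_{k,\min,q})$ modulo the closure of $\im(d_{k-1,\max,q})$ — equivalently, that $\omega\in\mathcal{D}(d_{k,\min,q})$ directly, since $\omega$ is $d_{k,\min,q}$-closed once it is in the minimal domain. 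Injectivity of the resulting map is then inherited for free from the injectivity of $\beta_q$ into $\overline{H}^k_{q,\max}(M,g)$, because the composition with the natural map to $\overline{H}^k_{q,\max}(M,g)$ is precisely $\beta_q$.

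The key step is therefore: show $\omega\in\mathcal{D}(d_{k,\min,q})$. Here is where $q$-parabolicity enters. Since $\omega$ is smooth and lies in $L^q\Omega^k(M,g)$ with $d_k\omega=0$, and since $\omega\in L^{q}\Omega^k(M,g)$ with $q\le p<\infty$ and also $\omega\in L^\infty\Omega^k(M,g)$ (so that the pointwise norm $|\omega|_g$ is bounded), I would take the sequence of Lipschitz cutoffs $\{\phi_n\}\subset\mathrm{Lip}_c(M,g)$ from Def. \ref{parax} for the parameter $p$ and consider $\phi_n\omega$. Each $\phi_n\omega$ has compact support (in the sense of being supported in a fixed compact set times controlled data) and, being Lipschitz times smooth, lies in $\mathcal{D}(d_{k,\min,q})$ after a standard mollification; crucially $d_k(\phi_n\omega)=d_0\phi_n\wedge\omega+\phi_n\,d_k\omega=d_0\phi_n\wedge\omega$. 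By dominated convergence $\phi_n\omega\to\omega$ in $L^q\Omega^k(M,g)$, and the $L^q$-norm of the error term is controlled by H\"older's inequality: $\|d_0\phi_n\wedge\omega\|_{L^q\Omega^{k+1}}\le \|d_0\phi_n\|_{L^p\Omega^1}\,\|\omega\|_{L^{r}\Omega^k}$ for the appropriate exponent $r$ with $1/q=1/p+1/r$ — note $r>q$ is finite (or we simply bound $|\omega|_g$ by its $L^\infty$ bound and use $\|d_0\phi_n\|_{L^q}\le \vol_g(M)^{1/q-1/p}\|d_0\phi_n\|_{L^p}\to 0$, using finiteness of volume). Either way, $d_k(\phi_n\omega)\to 0$ in $L^q$, so $\omega\in\mathcal{D}(d_{k,\min,q})$ with $d_{k,\min,q}\omega=0$.

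I expect the main obstacle to be the bookkeeping around the cutoff argument: ensuring that $\phi_n\omega$ (Lipschitz coefficients, only continuous first derivatives at best) genuinely lies in $\mathcal{D}(d_{k,\min,q})$, which requires a mollification/approximation by $C^\infty_c$ forms, and verifying that the commutator identity $d_k(\phi_n\omega)=d_0\phi_n\wedge\omega$ and the H\"older estimate survive that mollification — this is routine but must be stated cleanly, perhaps by first reducing to the case $\omega$ smooth (which it is) and invoking that Lipschitz-times-smooth-compactly-supported forms are in the minimal domain, a fact one can cite or prove in one line. A secondary subtlety is matching the parabolicity parameter: Def. \ref{parax} gives $\|d_0\phi_n\|_{L^p\Omega^1}\to 0$ for the fixed $p$, and we want the conclusion in $L^q$ for every $q\in(2,p]$; since $\vol_g(M)<\infty$ the $L^p$ control dominates the $L^q$ control for $q\le p$, so no loss occurs. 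Once $\omega\in\mathcal{D}(d_{k,\min,q})$ is established the remainder — that $\omega$ represents a well-defined class in $\overline{H}^k_{q,\min}(M,g)$ whose image under the natural map is $\beta_q(\omega)$, and hence the map $\beta_q:\mathcal{H}^k_{2,\mathrm{abs}}(M,g)\to\im(\overline{H}^k_{q,\min}(M,g)\to\overline{H}^k_{q,\max}(M,g))$ is well defined, linear, and injective — follows formally, exactly as in the proof of Th. \ref{tozziti}.
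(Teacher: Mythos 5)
Your proposal is correct and follows essentially the same route as the paper: the key point, that the bounded harmonic representative lies in $\mathcal{D}(d_{k,\min,q})$ (hence in $\ker(d_{k,\min,q})$), is exactly what the paper gets from Prop. \ref{tozzi} together with $q$-parabolicity (inherited from $p$-parabolicity and finite volume) via the cited result \cite[Prop. 2.5]{Fra}, whose content is precisely the cutoff-plus-$L^{\infty}$ argument you spell out. Injectivity is then deduced from the injectivity of $\beta_q$ into $\overline{H}^k_{q,\max}(M,g)$ established in Th. \ref{tozziti}, just as you do.
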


\begin{proof}
First we remark that $\im(\overline{H}^k_{q,\min}(M,g)\rightarrow\overline{H}^k_{q,\max}(M,g))=\ker(d_{k,\min,q})/(\overline{\mathrm{Im}(d_{k-1,\max,q})}\cap \ker(d_{k,\min,q}))$. Since $\vol_g(M)$ is finite we know that $(M,g)$ is $q$-parabolic for any $1\leq q\leq p.$ Thus by Prop. \ref{tozzi} and  \cite[Prop. 2.5]{Fra} we know that $\omega\in \ker(d_{k,\min,q})$ for any $1\leq q\leq p.$ Therefore $\beta_q(\omega)\in \im(\overline{H}^k_{q,\min}(M,g)\rightarrow\overline{H}^k_{q,\max}(M,g))$. Moreover by Th. \ref{tozziti} we already know that  if $\omega\in \overline{\mathrm{Im}(d_{k-1,\max,q})}$ then $\omega=0$, with $q\in [2,p]$. So we can conclude that $\beta_q:\mathcal{H}^k_{2,\mathrm{abs}}(M,g)\rightarrow \im(\overline{H}^k_{q,\min}(M,g)\rightarrow\overline{H}^k_{q,\max}(M,g))$ is injective as required.
\end{proof}

The next one is an obvious consequence of Th. \ref{tozziti}.

\begin{cor}
In the setting of Th. \ref{tozziti}. 
\begin{enumerate}
\item If $\dim(\im(H^k_{s,\infty}(M,g)\rightarrow\overline{H}^k_{\infty}(M,g))$ is finite dimensional then $\overline{H}^k_2(M,g)$ is finite dimensional too and $$\dim(\overline{H}^k_2(M,g))\leq \dim(\im(H^k_{s,\infty}(M,g)\rightarrow\overline{H}^k_{\infty}(M,g))).$$
\item If $p\geq 2$ and $\overline{H}^k_{p,\max}(M,g)$ is finite dimensional then $\overline{H}^k_2(M,g)$ is finite dimensional too and $$\dim(\overline{H}^k_{2}(M,g))\leq \dim(\overline{H}^k_{p,\max}(M,g)).$$
\item If $2\leq q\leq p$, $(M,g)$ is $p$-parabolic and $\dim(\im(\overline{H}^k_{q,\min}(M,g)\rightarrow\overline{H}^k_{q,\max}(M,g)))$ is finite dimensional then $\overline{H}^k_{2}(M,g)$ is finite dimensional too and $$\dim(\overline{H}^k_2(M,g))\leq\dim(\im(\overline{H}^k_{q,\min}(M,g)\rightarrow\overline{H}^k_{q,\max}(M,g))).$$
\end{enumerate}
\end{cor}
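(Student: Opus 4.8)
The plan is entirely formal: each of the three items asserts that $\overline{H}^k_2(M,g)$ embeds linearly into a space that is assumed finite dimensional, and then invokes the elementary fact that an injective linear map cannot increase dimension. First I would record the harmless notational point underlying the statement: under the standing hypothesis $\Delta_{k,\mathrm{abs}}=\Delta_k^{\mathcal{F}}$, Proposition \ref{oneto}(5) gives $d_{k,\max,2}=d_{k,\min,2}$, so the reduced cohomology $\overline{H}^k_2(M,g)$ is unambiguous and coincides with $\overline{H}^k_{2,\max}(M,g)$, which by \eqref{isoabs} is isomorphic to $\mathcal{H}^k_{2,\mathrm{abs}}(M,g)$. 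Thus all three items are statements about this single space, and the injective maps produced in Th. \ref{tozziti} and in the preceding corollary apply verbatim.

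For item (2): Th. \ref{tozziti} provides the injective linear map $\beta_p\colon\overline{H}^k_{2,\max}(M,g)\to\overline{H}^k_{p,\max}(M,g)$ for $p\ge 2$. If the codomain is finite dimensional, then so is the domain, with $\dim(\overline{H}^k_{2,\max}(M,g))\le\dim(\overline{H}^k_{p,\max}(M,g))$; combined with the identification above this is exactly the claimed inequality. Item (1) is the identical argument applied to the injective linear map $\alpha_\infty\colon\overline{H}^k_{2,\max}(M,g)\to\im(H^k_{s,\infty}(M,g)\to\overline{H}^k_{\infty,\max}(M,g))$ furnished by Th. \ref{tozziti}.

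For item (3): the hypotheses of the preceding corollary are in force, since finite volume follows from the standing assumptions of Th. \ref{tozziti} and $p$-parabolicity is assumed here. For $q\in(2,p]$ that corollary yields an injective linear map $\beta_q\colon\mathcal{H}^k_{2,\mathrm{abs}}(M,g)\to\im(\overline{H}^k_{q,\min}(M,g)\to\overline{H}^k_{q,\max}(M,g))$, and finite-dimensionality of the target again propagates to $\mathcal{H}^k_{2,\mathrm{abs}}(M,g)\cong\overline{H}^k_2(M,g)$ with the stated bound. For the endpoint $q=2$ one checks directly, using $\ker(d_{k,\min,2})=\ker(d_{k,\max,2})$ (again Proposition \ref{oneto}(5)) and $\overline{\im(d_{k-1,\max,2})}\subseteq\ker(d_{k,\max,2})$, that $\im(\overline{H}^k_{2,\min}(M,g)\to\overline{H}^k_{2,\max}(M,g))=\overline{H}^k_{2,\max}(M,g)$, so the inequality there is an equality.

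There is no genuine obstacle here — this is why the corollary is stated as an obvious consequence of Th. \ref{tozziti}. The only step requiring a word of care is the notational reduction $\overline{H}^k_2(M,g)=\overline{H}^k_{2,\max}(M,g)$, after which everything reduces to the linear-algebra observation that injections do not increase dimension; I would simply spell this out and cite the relevant injective maps from Th. \ref{tozziti} and the preceding corollary.
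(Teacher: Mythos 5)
Your proposal is correct and matches the paper's intent exactly: the paper gives no proof beyond calling the corollary an obvious consequence of Th.~\ref{tozziti}, and the intended argument is precisely the one you spell out, namely that the injective linear maps $\beta_p$, $\alpha_\infty$ and (for the third item) $\beta_q$ from the preceding corollary force the dimension bounds, after identifying $\overline{H}^k_2(M,g)$ with $\overline{H}^k_{2,\max}(M,g)\cong\mathcal{H}^k_{2,\mathrm{abs}}(M,g)$ via Prop.~\ref{oneto} and \eqref{isoabs}. Your separate treatment of the endpoint $q=2$ in item (3) is a sensible extra precaution but introduces nothing beyond the paper's approach.
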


In the remaining part of this subsection we investigate a kind of converse of Th. \ref{tozziti}. More precisely assuming $\vol_g(M)<\infty$ we want to find some sufficient conditions that guarantee that the map $H^k_{q,\max}(M,g)\rightarrow H^k_2(M,g)$ induced by the inclusion $L^q\Omega^k(M,g)\hookrightarrow L^2\Omega^k(M,g)$ is injective.

\begin{teo}
\label{LpL2}
Let $(M,g)$ be an open incomplete Riemannian manifold of dimension $m>2$. Assume that
\begin{itemize}
\item $\vol_g(M)<\infty$;
\item There exists $k\in \{0,...,m\}$ such that $\ell_{k-1}^-,\ell_k^-\in \mathcal{K}(M)$;
\item $\mathrm{Im}(d_{k-1,\max,q})$ is closed in $L^q\Omega^k(M,g)$ for some $2\leq q< \infty$;
\item We have a continuous inclusion $W^{1,2}_0(M,g)\hookrightarrow L^{\frac{2m}{m-2}}(M,g)$;
\item  The operator $$\Delta_{k,\mathrm{abs}}:L^2\Omega^k(M,g)\rightarrow L^2\Omega^k(M,g)\quad\quad \mathrm{equals}\quad\quad \Delta_k^{\mathcal{F}}:L^2\Omega^k(M,g)\rightarrow L^2\Omega^k(M,g)$$
and analogously the operator $$\Delta_{k-1,\mathrm{abs}}:L^2\Omega^{k-1}(M,g)\rightarrow L^2\Omega^{k-1}(M,g)\quad\quad \mathrm{equals}\quad\quad \Delta_{k-1}^{\mathcal{F}}:L^2\Omega^{k-1}(M,g)\rightarrow L^2\Omega^{k-1}(M,g).$$
\end{itemize}
Then the map $$\gamma: H^k_{q,\max}(M,g)\rightarrow H^k_2(M,g)$$ induced by the continuous inclusion $L^q\Omega^k(M,g)\hookrightarrow L^2\Omega^k(M,g)$  is injective.
\end{teo}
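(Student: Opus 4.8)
The plan is to show that if $[\omega]\in H^k_{q,\max}(M,g)$ maps to zero in $H^k_2(M,g)$, then $[\omega]=0$ already in $H^k_{q,\max}(M,g)$. So let $\omega\in\ker(d_{k,\max,q})$ and suppose $\omega=d_{k-1,\max,2}\eta$ for some $\eta\in\mathcal{D}(d_{k-1,\max,2})\subset L^2\Omega^{k-1}(M,g)$; we must produce $\zeta\in\mathcal{D}(d_{k-1,\max,q})$ with $d_{k-1,\max,q}\zeta=\omega$. The first step is to regularize $\eta$ by the heat flow of $\Delta_{k-1}^{\mathcal F}$. Since $\Delta_{k-1,\mathrm{abs}}=\Delta_{k-1}^{\mathcal F}$, Prop.~\ref{commu} gives $d_{k-1,\max,2}e^{-t\Delta_{k-1}^{\mathcal F}}\eta=e^{-t\Delta_k^{\mathcal F}}d_{k-1,\max,2}\eta=e^{-t\Delta_k^{\mathcal F}}\omega$ (using $\Delta_{k,\mathrm{abs}}=\Delta_k^{\mathcal F}$ to identify the heat semigroup on $k$-forms with the one coming from the $L^2$-maximal complex). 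So for every $t>0$ the form $e^{-t\Delta_k^{\mathcal F}}\omega$ is $d_{\max,2}$-exact, with primitive $\eta_t:=e^{-t\Delta_{k-1}^{\mathcal F}}\eta$.

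The second step is to upgrade these primitives to $L^q$. By Theorem~\ref{LpLqKato}(2), applied with the Sobolev embedding hypothesis and $\ell_{k-1}^-\in\mathcal K(M)$: since $\eta\in L^2\Omega^{k-1}(M,g)$ and (as $\vol_g(M)<\infty$) any $L^q$ datum is $L^2$ while conversely the ultracontractive bound \eqref{klplq} with $p=2$ gives $e^{-t\Delta_{k-1}^{\mathcal F}}\eta\in L^q\Omega^{k-1}(M,g)$ for every $0<t\le 1$, with $\|\eta_t\|_{L^q}\le c_1 e^{c_2 t} t^{-\frac m2(\frac12-\frac1q)}\|\eta\|_{L^2}$. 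Similarly $e^{-t\Delta_k^{\mathcal F}}\omega\to\omega$ in $L^2\Omega^k(M,g)$ as $t\to 0^+$ (strong continuity), while $e^{-t\Delta_k^{\mathcal F}}\omega$ lies in $L^q\Omega^k(M,g)$; and because $\omega\in L^q\Omega^k(M,g)$ with $\ell_k^-\in\mathcal K(M)$, Theorem~\ref{LpLqKato}(1) gives the uniform bound $\|e^{-t\Delta_k^{\mathcal F}}\omega\|_{L^q}\le\delta e^{ct}\|\omega\|_{L^q}$ for all $t\ge0$, and in fact $e^{-t\Delta_k^{\mathcal F}}\omega\to\omega$ in $L^q\Omega^k(M,g)$ as $t\to0^+$ (this last convergence needs a short argument: strong $L^2$-convergence plus the uniform $L^q$-bound, via a standard interpolation/density argument, since $C^\infty_c$ is a core and the semigroup is $L^q$-equicontinuous near $t=0$; alternatively use \eqref{klplq} to control $e^{-t\Delta_k^{\mathcal F}}\omega-e^{-s\Delta_k^{\mathcal F}}\omega$). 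From $d_{k-1,\max,2}\eta_t=e^{-t\Delta_k^{\mathcal F}}\omega$, item \textbf{d)} (or the definition of the maximal extension, since $\eta_t,e^{-t\Delta_k^{\mathcal F}}\omega$ are both in $L^q$) forces $\eta_t\in\mathcal D(d_{k-1,\max,q})$ with $d_{k-1,\max,q}\eta_t=e^{-t\Delta_k^{\mathcal F}}\omega$.

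The third step invokes the closed-range hypothesis. Since $\mathrm{Im}(d_{k-1,\max,q})$ is closed in $L^q\Omega^k(M,g)$ and $e^{-t\Delta_k^{\mathcal F}}\omega\in\mathrm{Im}(d_{k-1,\max,q})$ for all small $t$, while $e^{-t\Delta_k^{\mathcal F}}\omega\to\omega$ in $L^q\Omega^k(M,g)$, we conclude $\omega\in\overline{\mathrm{Im}(d_{k-1,\max,q})}=\mathrm{Im}(d_{k-1,\max,q})$, i.e. $[\omega]=0$ in $H^k_{q,\max}(M,g)$. This proves injectivity of $\gamma$. For the final clause, if in addition $\mathrm{Im}(d_{k-1,\max,2})$ is closed in $L^2\Omega^k(M,g)$, then $H^k_2(M,g)=\overline H^k_{2,\max}(M,g)\cong\mathcal H^k_{2,\mathrm{abs}}(M,g)$ by \eqref{isoabs}, and surjectivity of $\gamma$ follows from Theorem~\ref{tozziti}: the inclusion $\mathcal H^k_{2,\mathrm{abs}}(M,g)\hookrightarrow L^q\Omega^k(M,g)$ realizes each $L^2$-harmonic class as an $L^q$-closed form, giving a right inverse to $\gamma$; combined with injectivity this makes $\gamma$ an isomorphism.

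\textbf{Main obstacle.} The delicate point is the $L^q$-convergence $e^{-t\Delta_k^{\mathcal F}}\omega\to\omega$ in $L^q\Omega^k(M,g)$ as $t\to0^+$ for the given $\omega\in L^q\cap L^2$: the semigroup $e^{-t\Delta_k^{\mathcal F}}$ is a priori only defined and strongly continuous on $L^2$, and Theorem~\ref{LpLqKato} supplies boundedness but not automatically strong continuity on $L^q$. I expect this is handled by combining the uniform bound \eqref{klqlq} with density of $L^2\cap L^q$ forms for which the convergence is clear (e.g. $\omega\in\mathcal D(\Delta_k^{\mathcal F})$ with $\Delta_k^{\mathcal F}\omega\in L^q$, or $C^\infty_c$-forms) — a routine but necessary $\varepsilon/3$ argument that also leans on $\vol_g(M)<\infty$ so that $L^q\hookrightarrow L^2$. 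Everything else is bookkeeping with the commutation of $d$ and the heat semigroup (Prop.~\ref{commu}), the identifications $\Delta_{\mathrm{abs}}=\Delta^{\mathcal F}$ in both degrees, and the closed-range hypothesis.
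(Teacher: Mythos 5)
Your argument is correct in outline and reaches the same intermediate identity as the paper, namely $d_{k-1,\max,q}\bigl(e^{-t\Delta_{k-1}^{\mathcal F}}\eta\bigr)=e^{-t\Delta_k^{\mathcal F}}\omega$ with all objects in $L^q$, but it then diverges at the crucial limiting step. You aim for strong $L^q$-convergence $e^{-t\Delta_k^{\mathcal F}}\omega\to\omega$ as $t\to 0^+$, which you rightly flag as the main obstacle; the paper never proves (nor needs) any strong continuity of the semigroup on $L^q$. Instead it exploits reflexivity: the family $e^{-t_j\Delta_k^{\mathcal F}}\omega$ is bounded in $L^q\Omega^k(M,g)$ by \eqref{klqlq}, so a subsequence converges weakly in $L^q$; the weak limit is identified with $\omega$ through the strong $L^2$-convergence together with \eqref{weak}, and then $\omega\in\overline{\im(d_{k-1,\max,q})}$ follows from the duality criterion \eqref{vid} (pairing against $\ker(d^t_{k-1,\min,q'})$), the closed-range hypothesis finishing the proof. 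Your route can be completed, but not quite as sketched: the ``alternative'' of controlling $e^{-t\Delta_k^{\mathcal F}}\omega-e^{-s\Delta_k^{\mathcal F}}\omega$ via \eqref{klplq} does not work because of the factor $s^{-\frac m2(\frac12-\frac1q)}$, and $L^q$-convergence on a dense set is not automatic even for $C^\infty_c$-forms; what does work is interpolation: for bounded $\phi$ one has $\|e^{-t\Delta_k^{\mathcal F}}\phi-\phi\|_{L^q}\le\|e^{-t\Delta_k^{\mathcal F}}\phi-\phi\|_{L^2}^{2/q}\,\|e^{-t\Delta_k^{\mathcal F}}\phi-\phi\|_{L^\infty}^{1-2/q}$, where the second factor is uniformly bounded for $t\in(0,1]$ by \eqref{klqlq} with exponent $\infty$ (this is where $\ell_k^-\in\mathcal K(M)$ enters) and the first tends to $0$; then your $\varepsilon/3$ argument with the uniform $L^q$-bound and density of $C^\infty_c$ in $L^q$ gives the desired continuity, after which closedness of $\im(d_{k-1,\max,q})$ yields $[\omega]_q=0$ exactly as you say. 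In short: same heat-flow regularization of the $L^2$-primitive, but the paper trades your fixable yet genuinely nontrivial strong $L^q$-continuity for soft weak-compactness plus duality, which keeps the proof entirely within \eqref{klqlq}, \eqref{klplq}, \eqref{weak} and \eqref{vid}.
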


Note that the equalities in the fifth item above implies $d_{k-1,\max,2}=d_{k-1,\min,2}$ and $d_{k,\max,2}=d_{k,\min,2}$. Hence there is a unique $L^2$-cohomology group at the level of $k$-forms that we denote with $H^k_2(M,g)$.

\begin{proof} 
We start with a remark about the notation. For any $1\leq p< \infty$ and $\omega\in \ker(d_{k,\max,p})$ with $[\omega]_p$ we denote the class induced by $\omega$ in $H^k_{p,\max}(M,g)$. Moreover given any $2\leq q<\infty$ and an arbitrary class $[\omega]_q\in H^k_{q,\max}(M,g)$ with $\omega\in \ker(d_{k,\max,q})$ a representative for $[\omega]_q$, we recall that  $\gamma([\omega]_q)$ is defined as the class induced by $\omega$ in $H^k_{2}(M,g)$. In other words $\gamma([\omega]_q):=[\omega]_2$. Let us now deal with the proof of the theorem. Assume that $\gamma([\omega]_q)=0$, that is $d_{k-1,2}\eta=\omega$ for some $\eta\in \mathcal{D}(d_{k-1,2})\subset L^2\Omega^{k-1}(M,g)$. Since $\overline{H}^k_{q,\max}(M,g)=H^k_{q,\max}(M,g)$ it is enough to show that $$\omega\in \overline{\im(d_{k-1,\max,q})}\subset L^q\Omega^k(M,g).$$  Let $\{t_j\}_{j\in \mathbb{N}}$ be a sequence such that $1>t_j>0$ and $t_j\rightarrow 0$ as $j\rightarrow \infty$. Let $\eta_j:=e^{-t_j\Delta_{k-1,\mathrm{abs}}}\eta$. We claim that 
$$\{\eta_j\}_{j\in \mathbb{N}}\subset L^q\Omega^{k-1}(M,g)\cap \Omega^{k-1}(M)\cap \mathcal{D}(d_{k-1,2}).$$ The inclusion $\{\eta_j\}_{j\in \mathbb{N}}\subset \mathcal{D}(d_{k-1,2})\cap \Omega^{k-1}(M)$ is clear and follows immediately by the properties of the heat operator. The remaining inclusion $\{\eta_j\}_{j\in \mathbb{N}}\subset L^q\Omega^{k-1}(M,g)$ follows by \eqref{klplq} as we assumed $\Delta_{k-1,\mathrm{abs}}=\Delta_{k-1}^{\mathcal{F}}$ and $\ell_{k-1}^-\in \mathcal{K}(M)$.  Now we want to show that $d_{k-1}\eta_j\in L^q\Omega^k(M,g)$. First we point out that $d_{k-1,2}e^{-t_j\Delta_{k-1,\mathrm{abs}}}\eta=e^{-t_j\Delta_{k,\mathrm{abs}}}d_{k-1,2}\eta$, see Prop. \ref{commu}. The latter equality clearly holds true in $L^2\Omega^k(M,g)$. On the other hand we have smooth $k$-forms on both sides of the equality. Therefore $d_{k-1,2}e^{-t_j\Delta_{k-1,\mathrm{abs}}}\eta=e^{-t_j\Delta_{k,\mathrm{abs}}}d_{k-1,2}\eta$ in $\Omega^k(M)$. 
In this way we are in position  to conclude  that $d_{k-1}\eta_j\in L^q\Omega^k(M,g)$ in that
\begin{align}
&\|d_{k-1}\eta_j\|_{L^q\Omega^k(M,g)}=\|d_{k-1}e^{-t_j\Delta_{k-1,\mathrm{abs}}}\eta\|_{L^q\Omega^k(M,g)}= \|d_{k-1,2}e^{-t_j\Delta_{k-1,\mathrm{abs}}}\eta\|_{L^q\Omega^k(M,g)}=\\
& \nonumber \|e^{-t_j\Delta_{k,\mathrm{abs}}}d_{k-1,2}\eta\|_{L^q\Omega^k(M,g)}= \|e^{-t_j\Delta_{k,\mathrm{abs}}}\omega\|_{L^q\Omega^k(M,g)}= \|e^{-t_j\Delta_{k}^{\mathcal{F}}}\omega\|_{L^q\Omega^k(M,g)}<\infty
\end{align}
with
$\|e^{-t_j\Delta_k^{\mathcal{F}}}\omega\|_{L^q\Omega^k(M,g)}<\infty$ thanks to \eqref{klqlq}. We can thus conclude that $\{\eta_j\}_{j\in \mathbb{N}}\subset \mathcal{D}(d_{k-1,\max,q})$ and $d_{k-1,\max,q}\eta_j=d_{k-1}\eta_j$  since $\eta_j$ is smooth. Furthermore we note that $d_{k-1,2}\eta_j=d_{k-1,\max,q}\eta_j$ for each $j\in\mathbb{N}$ as follows by \eqref{corno}.  As a next step consider the sequence $\{d_{k-1,\max,q}\eta_j\}_{j\in \mathbb{N}}$. We know that it is bounded in $L^q\Omega^k(M,g)$ as 
\begin{align}
& \nonumber \|d_{k-1,\max,q}\eta_j\|_{L^q\Omega^k(M,g)}=\|d_{k-1,2}e^{-t_j\Delta_{k-1,\mathrm{abs}}}\eta\|_{L^q\Omega^k(M,g)}=\|e^{-t_j\Delta_{k,\mathrm{abs}}}d_{k-1,2}\eta\|_{L^q\Omega^k(M,g)}=\\ 
& \nonumber \|e^{-t_j\Delta_{k}^{\mathcal{F}}}d_{k-1,2}\eta\|_{L^q\Omega^k(M,g)}\leq \delta e^{t_jc}\|d_{k-1,2}\eta\|_{L^q\Omega^k(M,g)}\leq\delta e^{c}\|\omega\|_{L^q\Omega^k(M,g)}
\end{align}
where in the second to last inequality above we have used \eqref{klqlq}. By the fact that  $L^q\Omega^k(M,g)$ is a reflexive Banach space we know that there exists a subsequence $\{t_u\}\subset \{t_j\}$ such that $\{d_{k-1,\max,q}\eta_u\}$ converges weakly to some $\zeta\in L^q\Omega^k(M,g)$, that is $d_{k-1,\max,q}\eta_u\rightharpoonup \zeta$ as $u\rightarrow \infty$. Since $\vol_g(M)<\infty$ we have a continuous inclusion $L^q\Omega^k(M,g)\hookrightarrow L^2\Omega^k(M,g)$ and therefore $d_{k-1,\max,q}\eta_u\rightharpoonup \zeta$ in $L^2\Omega^k(M,g)$ as $u\rightarrow \infty$, see \eqref{weak}. On the other hand using the fact that $e^{-t\Delta_{k,\mathrm{abs}}}$ is a strongly continuous semigroup of contractions, see \cite[Prop. 6.14]{KSC}, and keeping in mind that $d_{k-1,2}\eta_u=d_{k-1,\max,q}\eta_u$, we can conclude that $d_{k-1,2}\eta_u\rightarrow \omega$ in $L^2\Omega^k(M,g)$ in that
\begin{align}
& \nonumber \lim_{u\rightarrow \infty} \|d_{k-1,2}\eta_u-\omega\|_{L^2\Omega^k(M,g)}=\lim_{u\rightarrow \infty} \|d_{k-1,2}e^{-t_u\Delta_{k-1,\mathrm{abs}}}\eta-\omega\|_{L^2\Omega^k(M,g)}=\\
& \nonumber \lim_{u\rightarrow \infty} \|e^{-t_u\Delta_{k,\mathrm{abs}}}d_{k-1,2}\eta-\omega\|_{L^2\Omega^k(M,g)}=\lim_{u\rightarrow \infty} \|e^{-t_u\Delta_{k,\mathrm{abs}}}\omega-\omega\|_{L^2\Omega^k(M,g)}=0.
\end{align}
Thus we deduce that $\omega=\zeta$ in $L^2\Omega^k(M,g)$ and so $\omega=\zeta$ in $L^q\Omega^k(M,g)$. We are finally in position to show that $\omega\in \overline{\mathrm{\im}(d_{k-1,\max,q})}$. Thanks to \eqref{vid} it is enough to check that $$\int_Mg(\omega,\psi)\dvol_g=0$$ for each $\psi\in \ker(d^t_{k-1,\min,q'})$ with $q'=\frac{q}{q-1}$. We have 
$$ \nonumber \int_Mg(\omega,\psi)\dvol_g=\lim_{u\rightarrow\infty}\int_Mg(d_{k-1,\max,q}\eta_u,\psi)\dvol_g=\lim_{u\rightarrow\infty}\int_Mg(\eta_u,d^t_{k-1,\min,q'}\psi)\dvol_g=0.$$
We can thus conclude that $[\omega]_q=0$ in $\overline{H}^k_{q,\max}(M,g)=H^k_{q,\max}(M,g)$ as desired.  
\end{proof}

\begin{cor}
\label{IsoL2Lq}
In the setting of Th. \ref{LpL2}. If $\im(d_{k-1,\max,2})$ is closed in $L^2\Omega^k(M,g)$ then $\gamma:H^k_{q,\max}(M,g)\rightarrow H^k_2(M,g)$ is an isomorphism.
\end{cor}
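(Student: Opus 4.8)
The plan is to rely on Th. \ref{LpL2} for the injectivity of $\gamma$ and to prove the remaining statement, namely that $\gamma$ is \emph{surjective}, by representing an arbitrary $L^2$-cohomology class by its harmonic representative and showing that this representative already lives in $L^q$ and is $d_{k,\max,q}$-closed. The extra hypothesis that $\im(d_{k-1,\max,2})$ is closed will be used precisely to identify $H^k_2(M,g)$ with the space of harmonic forms.

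First I would record the identifications available under the hypotheses. Since $\Delta_{k,\mathrm{abs}}=\Delta_k^{\mathcal{F}}$ we have $d_{k,\max,2}=d_{k,\min,2}$ (Prop. \ref{oneto}), so $H^k_2(M,g)$ is an unambiguous $L^2$-cohomology group; and as $\im(d_{k-1,\max,2})$ is now assumed closed, $H^k_2(M,g)=\overline{H}^k_{2,\max}(M,g)$, which by \eqref{isoabs} is isomorphic to $\mathcal{H}^k_{2,\mathrm{abs}}(M,g)=\ker(\Delta_k^{\mathcal{F}})$. Hence every class $[\omega]_2\in H^k_2(M,g)$ has a unique representative $\omega\in\ker(\Delta_k^{\mathcal{F}})$.

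Next I would check that this harmonic representative is an admissible $L^q$-maximal cocycle. By elliptic regularity $\omega$ is smooth, and since $\omega\in\ker(\Delta_{k,\mathrm{abs}})=\ker(d_{k,\max,2})\cap\ker(d^t_{k-1,\min,2})$ it is in particular closed as a smooth form. Moreover the hypotheses of Th. \ref{LpL2} include the Sobolev embedding $W^{1,2}_0(M,g)\hookrightarrow L^{\frac{2m}{m-2}}(M,g)$ and $\ell_k^-\in\mathcal{K}(M)$, so Prop. \ref{tozzi} applies and gives the continuous inclusion $\ker(\Delta_k^{\mathcal{F}})\hookrightarrow L^q\Omega^k(M,g)$; thus $\omega\in L^q\Omega^k(M,g)$. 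Being a smooth closed form lying in $L^q\Omega^k(M,g)$, $\omega$ belongs to $\mathcal{D}(d_{k,\max,q})$ with $d_{k,\max,q}\omega=0$, and so determines a class $[\omega]_q\in H^k_{q,\max}(M,g)$. By the definition of $\gamma$ recalled in the proof of Th. \ref{LpL2}, $\gamma([\omega]_q)=[\omega]_2$, which proves surjectivity; together with the injectivity from Th. \ref{LpL2} this yields that $\gamma$ is an isomorphism.

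I do not expect a genuine obstacle here: the substantive work is already done in Th. \ref{LpL2} and Prop. \ref{tozzi}, and the only points requiring a little care are the chain of identifications $H^k_2(M,g)\cong\mathcal{H}^k_{2,\mathrm{abs}}(M,g)$ (which needs both $\Delta_{k,\mathrm{abs}}=\Delta_k^{\mathcal{F}}$ and the new closedness assumption) and the verification that a smooth closed $L^q$-form is genuinely a $d_{k,\max,q}$-cocycle, which is immediate. One should also note, for completeness, that $\gamma$ is indeed the map sending $[\omega]_q$ to $[\omega]_2$ on harmonic representatives, so that the computed preimage is consistent with the statement.
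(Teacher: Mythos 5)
Your proposal is correct and follows essentially the same route as the paper: use the closedness of $\im(d_{k-1,\max,2})$ to identify $H^k_2(M,g)$ with $\mathcal{H}^k_{2,\mathrm{abs}}(M,g)$, take the unique harmonic representative, and use the Kato-class/Sobolev mapping properties (Prop. \ref{tozzi}, as packaged in Th. \ref{tozziti}) to see that it is a smooth closed $L^q$-form, hence a $d_{k,\max,q}$-cocycle mapping to the given class under $\gamma$. The paper simply cites Th. \ref{tozziti} where you invoke Prop. \ref{tozzi} directly; the argument is the same.
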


\begin{proof}
We need to check that $\gamma:H^k_{q,\max}(M,g)\rightarrow H^k_2(M,g)$ is surjective. Let $[\omega]_2$ be an arbitrary class in $H^k_2(M,g)$. Since $\im(d_{k-1,\max,2})$ is closed we know that $H^k_2(M,g)=\overline{H}^k_2(M,g)$ and so there exists a unique representative $\omega$ of $[\omega]_2$ with $\omega\in \mathcal{H}^k_{\mathrm{abs}}(M,g)$. Thanks to  Th. \ref{tozziti} we know that $\omega$ induces a nontrivial class $[\omega]_q\in H^k_{q,\max}(M,g)$. Clearly $\gamma([\omega]_q)=[\omega]_2$ and  this leads us to conclude that $\gamma:H^k_{q,\max}(M,g)\rightarrow H^k_2(M,g)$ is  surjective and hence an isomorphism.
\end{proof}
%

\subsection{Heat operator and $L^p$-Stokes theorem}
The goal of this subsection is  to provide some sufficient conditions that imply the validity of the $L^q$-Stokes theorem. We start with the next lemma which relies on some ideas already used in the proof of Th. \ref{LpL2}.
\begin{lemma}
\label{lemma}
Let $(M,g)$ be an open and possibly incomplete Riemannian manifold of finite volume such that 
\begin{itemize}
\item There exists $k\in \{0,...,m\}$ such that $\ell_k^-,\ell_{k+1}^-\in \mathcal{K}(M)$;
\item $\Delta_{k,\mathrm{abs}}=\Delta_{k}^{\mathcal{F}}$ and $\Delta_{k+1,\mathrm{abs}}=\Delta_{k+1}^{\mathcal{F}}$ as unbounded and self-adjoint operators acting on $L^2\Omega^k(M,g)$ and $L^2\Omega^{k+1}(M,g)$, respectively. 
\end{itemize}
Given $2\leq q< \infty$ let $$\mathcal{C}_q:=\{e^{-t\Delta_{k,\mathrm{abs}}}\eta:\ \eta\in \mathcal{D}(d_{k,\max,q}),\ t\in (0,1]\}.$$ Then for every $\eta\in \mathcal{D}(d_{k,\max,q})$ there exists a sequence $\{\eta_j\}_{j\in \mathbb{N}}\subset \mathcal{C}_q$ such that as $j\rightarrow \infty$ $$\eta_j\rightharpoonup \eta\quad\quad  \mathrm{and}\quad\quad d_{k,\max,q}\eta_j\rightharpoonup d_{k,\max,q}\eta$$ in $L^q\Omega^k(M,g)$ and $L^q\Omega^{k+1}(M,g)$, respectively.
\end{lemma}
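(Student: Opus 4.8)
The plan is to use the obvious candidate sequence and then extract the two weak limits by a reflexivity argument, exactly in the spirit of the proof of Th. \ref{LpL2}. Fix a sequence $\{t_j\}_{j\in\mathbb{N}}\subset(0,1]$ with $t_j\to 0^+$ and set $\eta_j:=e^{-t_j\Delta_{k,\mathrm{abs}}}\eta$, so that $\{\eta_j\}_{j\in\mathbb{N}}\subset\mathcal{C}_q$ by construction. Since $\vol_g(M)<\infty$ there are continuous inclusions $L^q\Omega^k(M,g)\hookrightarrow L^2\Omega^k(M,g)$ and $L^q\Omega^{k+1}(M,g)\hookrightarrow L^2\Omega^{k+1}(M,g)$; hence $\eta\in L^2\Omega^k(M,g)$ and, by \eqref{corno}, $\eta\in\mathcal{D}(d_{k,\max,2})$ with $d_{k,\max,2}\eta=d_{k,\max,q}\eta\in L^2\Omega^{k+1}(M,g)\cap L^q\Omega^{k+1}(M,g)$, so all the heat evolutions below are well defined.

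The first step is to show $\eta_j\in\mathcal{D}(d_{k,\max,q})$ together with an explicit formula for $d_{k,\max,q}\eta_j$. Each $\eta_j$ is smooth by the smoothing property of the heat semigroup, and Prop. \ref{commu}, combined with the hypotheses $\Delta_{k,\mathrm{abs}}=\Delta_k^{\mathcal{F}}$ and $\Delta_{k+1,\mathrm{abs}}=\Delta_{k+1}^{\mathcal{F}}$, gives
\[
d\eta_j=d_{k,\max,2}e^{-t_j\Delta_{k,\mathrm{abs}}}\eta=e^{-t_j\Delta_{k+1,\mathrm{abs}}}d_{k,\max,2}\eta=e^{-t_j\Delta_{k+1}^{\mathcal{F}}}d_{k,\max,q}\eta,
\]
an identity which holds a priori in $L^2\Omega^{k+1}(M,g)$ but, both sides being smooth, pointwise as well. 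Applying \eqref{klqlq} on $k$-forms (using $\ell_k^-\in\mathcal{K}(M)$) and on $(k+1)$-forms (using $\ell_{k+1}^-\in\mathcal{K}(M)$) produces a constant $C>0$, independent of $j\in\mathbb{N}$, with $\|\eta_j\|_{L^q\Omega^k(M,g)}\le C\|\eta\|_{L^q\Omega^k(M,g)}$ and $\|d\eta_j\|_{L^q\Omega^{k+1}(M,g)}\le C\|d_{k,\max,q}\eta\|_{L^q\Omega^{k+1}(M,g)}$. In particular $\eta_j$ and $d\eta_j$ lie in the relevant $L^q$ spaces, so, being smooth, $\eta_j\in\mathcal{D}(d_{k,\max,q})$ with $d_{k,\max,q}\eta_j=d\eta_j$, and both $\{\eta_j\}$ and $\{d_{k,\max,q}\eta_j\}$ are bounded in $L^q\Omega^k(M,g)$ and $L^q\Omega^{k+1}(M,g)$ respectively.

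The second step is to identify the weak limits. Since $e^{-t\Delta_{k,\mathrm{abs}}}$ and $e^{-t\Delta_{k+1,\mathrm{abs}}}$ are strongly continuous semigroups on the respective $L^2$ spaces, $\eta_j\to\eta$ in $L^2\Omega^k(M,g)$ and $d_{k,\max,q}\eta_j=e^{-t_j\Delta_{k+1,\mathrm{abs}}}d_{k,\max,2}\eta\to d_{k,\max,2}\eta$ in $L^2\Omega^{k+1}(M,g)$. Now I would invoke reflexivity of $L^q\Omega^k(M,g)$: any subsequence of $\{\eta_j\}$ has a further subsequence converging weakly in $L^q\Omega^k(M,g)$ to some $\xi$; by continuity of $L^q\hookrightarrow L^2$ that subsequence also converges weakly to $\xi$ in $L^2\Omega^k(M,g)$, see \eqref{weak}, whence $\xi=\eta$ by uniqueness of the $L^2$-limit. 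As every subsequential weak $L^q$-limit equals $\eta$ and the sequence is bounded, $\eta_j\rightharpoonup\eta$ in $L^q\Omega^k(M,g)$. The identical argument applied to $\{d_{k,\max,q}\eta_j\}$ yields $d_{k,\max,q}\eta_j\rightharpoonup d_{k,\max,2}\eta=d_{k,\max,q}\eta$ in $L^q\Omega^{k+1}(M,g)$, which completes the proof.

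I expect the only genuinely delicate point to be the passage from subsequential to full weak convergence; the leverage is that strong $L^2$-convergence pins the weak $L^q$-limit down uniquely, so reflexivity forces the entire sequence to converge weakly. A secondary item demanding care is the bookkeeping around Prop. \ref{commu}: one must record that the commutation relation $d\circ e^{-t\Delta_{k,\mathrm{abs}}}=e^{-t\Delta_{k+1,\mathrm{abs}}}\circ d$ holds initially only in $L^2$ and then upgrade it to an identity of smooth forms, which is exactly what allows the $L^q$-bound \eqref{klqlq} to be applied to $d\eta_j$.
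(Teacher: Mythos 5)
Your proposal is correct and follows essentially the same route as the paper's proof: the same candidate sequence $\eta_j=e^{-t_j\Delta_{k,\mathrm{abs}}}\eta$, the commutation from Prop. \ref{commu} upgraded to an identity of smooth forms, the uniform $L^q$ bounds from \eqref{klqlq}, and identification of the weak limits via strong $L^2$-continuity of the semigroups together with \eqref{weak}. The only (harmless) difference is that you run the subsequence--subsequence argument to get weak convergence of the full sequence, whereas the paper is content with what amounts to a weakly convergent (sub)sequence, which already suffices since the lemma only asserts the existence of such a sequence.
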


\begin{proof}
Our first aim is to show that  $\mathcal{C}_q\subset \mathcal{D}(d_{k,\max,q})$. Let $\eta\in \mathcal{D}(d_{k,\max,q})$ be arbitrarily fixed. Since $\eta\in L^q\Omega^k(M,g)\hookrightarrow L^2\Omega^k(M,g)$ and $e^{-t\Delta_{k,\mathrm{abs}}}\eta$ is smooth we have $$d_{k}e^{-t\Delta_{k,\mathrm{abs}}}\eta=d_{k,2}e^{-t\Delta_{k,\mathrm{abs}}}\eta=e^{-t\Delta_{k+1,\mathrm{abs}}}d_{k,2}\eta=e^{-t\Delta_{k+1,\mathrm{abs}}}d_{k,\max,q}\eta$$ and thus for the $L^q$ norms we have 
$$\|e^{-t\Delta_{k,\mathrm{abs}}}\eta\|_{L^q\Omega^k(M,g)}\leq \delta e^{tc}\|\eta\|_{L^q\Omega^k(M,g)}\leq \delta e^{c}\|\eta\|_{L^q\Omega^k(M,g)}$$ 
and
\begin{align}
 \nonumber  \|d_{k}e^{-t\Delta_{k,\mathrm{abs}}}\eta\|_{L^q\Omega^k(M,g)}&=\|e^{-t\Delta_{k+1,\mathrm{abs}}}d_{k,\max,q}\eta\|_{L^q\Omega^k(M,g)}\leq\\
& \nonumber \delta e^{tc}\|d_{k,\max,q}\eta\|_{L^q\Omega^{k+1}(M,g)}\leq \delta e^{c}\|d_{k,\max,q}\eta\|_{L^q\Omega^{k+1}(M,g)}
\end{align}
where in both lines the last inequality follows by \eqref{klqlq}. This shows that $\mathcal{C}_q\subset \mathcal{D}(d_{k,\max,q})$. Consider now a sequence $\{t_j\}_{j\in \mathbb{N}}\subset (0,1]$ with $t_j\rightarrow 0$ as $j\rightarrow \infty$. Given any $\eta\in \mathcal{D}(d_{k,\max,q})$ let $\eta_j:=e^{-t_j\Delta_{k,\mathrm{abs}}}\eta$. Then $$\|\eta_j\|_{L^q\Omega^k(M,g)}\leq \delta e^{c}\|\eta\|_{L^q\Omega^k(M,g)}\quad\quad\ \mathrm{and}\ \quad\quad\|d_k\eta_j\|_{L^q\Omega^{k+1}(M,g)}\leq \delta e^{c}\|d_{k,\max,q}\eta\|_{L^q\Omega^{k+1}(M,g)}.$$ Therefore both sequences are bounded in $L^q\Omega^{k}(M,g)$ and $L^q\Omega^{k+1}(M,g)$, respectively, and since these are reflexive Banach spaces we get that $\eta_j\rightharpoonup \alpha$ to some  $\alpha \in L^q\Omega^k(M,g)$ and $d_{k,\max,q}\eta_j\rightharpoonup \beta$ to some $\beta\in L^q\Omega^{k+1}(M,g)$,  as $j\rightarrow \infty$. On the other hand we know that $\{\eta_j\}_{j\in\mathbb{N}}\subset \mathcal{D}(d_{k,\max,2})$ and using the fact that both $e^{-t\Delta_{k,\mathrm{abs}}}$ and $e^{-t\Delta_{k+1,\mathrm{abs}}}$ are strongly continuous with respect to $t$, see e.g. \cite[Prop. 6.14]{KSC}, we have $$\lim_{j\rightarrow \infty}\|\eta-\eta_j\|_{L^2\Omega^k(M,g)}=\lim_{j\rightarrow \infty}\|\eta-e^{-t_j\Delta_{k,\mathrm{abs}}}\eta\|_{L^2\Omega^k(M,g)}=0$$ and 
\begin{align}
& \nonumber \lim_{j\rightarrow \infty}\|d_{k,\max,2}\eta-d_{k,\max,2}\eta_j\|_{L^2\Omega^{k+1}(M,g)}=\lim_{j\rightarrow \infty}\|d_{k,\max,2}\eta-d_{k,\max,2}e^{-t_j\Delta_{k,\mathrm{abs}}}\eta\|_{L^2\Omega^{k+1}(M,g)}=\\
& \nonumber \lim_{j\rightarrow \infty}\|d_{k,\max,2}\eta-e^{-t_j\Delta_{k+1,\mathrm{abs}}}d_{k,\max,2}\eta\|_{L^2\Omega^{k+1}(M,g)}=0.
\end{align}
In such a way, thanks to \eqref{weak},  we can conclude that $\alpha=\eta$ in $L^2\Omega^k(M,g)$ and $\beta=d_{k,\max,q}\eta$ in $L^2\Omega^{k+1}(M,g)$ and therefore $\alpha=\eta$ in $L^q\Omega^k(M,g)$ and $\beta=d_{k,\max,q}\eta$ in $L^q\Omega^{k+1}(M,g)$ as desired.
\end{proof}

\begin{teo} 
\label{LpStokes}
Let $(M,g)$ be an open and  incomplete Riemannian manifold of dimension $m>2$ such that 
\begin{itemize}
\item $\vol_g(M)<\infty$;
\item There is a continuous inclusion $W^{1,2}_0(M,g)\hookrightarrow L^{\frac{2m}{m-2}}(M,g)$;
\item $(M,g)$ is $q$-parabolic for some $2<q<\infty$;
\item There exists $k\in \{0,...,m\}$ such that $\ell_k^-,\ell_{k+1}^-\in \mathcal{K}(M)$;
\item $\Delta_{k,\mathrm{abs}}=\Delta_{k}^{\mathcal{F}}$ and $\Delta_{k+1,\mathrm{abs}}=\Delta_{k+1}^{\mathcal{F}}$ as unbounded and self-adjoint operators acting on $L^2\Omega^k(M,g)$ and $L^2\Omega^{k+1}(M,g)$, respectively. 
\end{itemize}
Then the $L^r$-Stokes theorem holds true on $L^r\Omega^{k}(M,g)$ for each $r\in [2,q]$.
\end{teo}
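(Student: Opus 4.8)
The plan is to prove, for each fixed $r\in[2,q]$, that $\mathcal{D}(d_{k,\max,r})\subseteq\mathcal{D}(d_{k,\min,r})$; since the reverse inclusion always holds and the two operators agree on $\mathcal{D}(d_{k,\min,r})$, this gives $d_{k,\max,r}=d_{k,\min,r}$, i.e.\ the $L^r$-Stokes theorem on $L^r\Omega^k(M,g)$. First I would record two preliminaries. Since $\vol_g(M)<\infty$ and $(M,g)$ is $q$-parabolic, it is $r$-parabolic for every $1\le r\le q$: if $\{\phi_n\}$ is a parabolicity sequence for $q$, then Hölder's inequality on the finite-measure space $M$ gives $\|d_0\phi_n\|_{L^r\Omega^1(M,g)}\le \vol_g(M)^{1/r-1/q}\|d_0\phi_n\|_{L^q\Omega^1(M,g)}\to 0$, so the same sequence works for $r$. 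Second, all hypotheses of Lemma \ref{lemma} are in force, hence it applies verbatim with $q$ replaced by any such $r$.

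Now fix $r\in[2,q]$ and $\eta\in\mathcal{D}(d_{k,\max,r})$. By Lemma \ref{lemma} there is a sequence $\{t_j\}\subset(0,1]$ with $t_j\to 0$ such that, setting $\eta_j:=e^{-t_j\Delta_{k,\mathrm{abs}}}\eta\in\mathcal{C}_r\subset\mathcal{D}(d_{k,\max,r})\cap\Omega^k(M)$, one has $\eta_j\rightharpoonup\eta$ in $L^r\Omega^k(M,g)$ and $d_{k,\max,r}\eta_j\rightharpoonup d_{k,\max,r}\eta$ in $L^r\Omega^{k+1}(M,g)$; moreover, from the proof of that lemma, $d_k\eta_j=e^{-t_j\Delta_{k+1,\mathrm{abs}}}d_{k,\max,r}\eta$. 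The key point is that each $\eta_j$ and each $d_k\eta_j$ is bounded: since $\vol_g(M)<\infty$ and $r\ge 2$ we have $\eta\in L^2\Omega^k(M,g)\cap L^r\Omega^k(M,g)$ and $d_{k,\max,r}\eta\in L^2\Omega^{k+1}(M,g)\cap L^r\Omega^{k+1}(M,g)$, so Th. \ref{LpLqKato}(2) — applicable thanks to the Sobolev embedding $W^{1,2}_0(M,g)\hookrightarrow L^{2m/(m-2)}(M,g)$ and to $\ell_k^-,\ell_{k+1}^-\in\mathcal{K}(M)$ — together with the identifications $\Delta_{k,\mathrm{abs}}=\Delta_k^{\mathcal{F}}$ and $\Delta_{k+1,\mathrm{abs}}=\Delta_{k+1}^{\mathcal{F}}$, yields $\eta_j=e^{-t_j\Delta_k^{\mathcal{F}}}\eta\in L^\infty\Omega^k(M,g)$ and $d_k\eta_j=e^{-t_j\Delta_{k+1}^{\mathcal{F}}}d_{k,\max,r}\eta\in L^\infty\Omega^{k+1}(M,g)$.

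Next I would show that $\eta_j\in\mathcal{D}(d_{k,\min,r})$ for every $j$ by a cut-off argument, in the spirit of \cite[Prop. 2.5]{Fra}. Let $\{\phi_n\}$ be an $r$-parabolicity sequence. The forms $\phi_n\eta_j$ are Lipschitz with compact support, hence, after mollification in coordinate charts subordinate to a finite cover of $\supp\phi_n$, lie in $\mathcal{D}(d_{k,\min,r})$, with $d(\phi_n\eta_j)=d_0\phi_n\wedge\eta_j+\phi_n\,d_k\eta_j$. As $n\to\infty$ we have $\phi_n\eta_j\to\eta_j$ and $\phi_n\,d_k\eta_j\to d_k\eta_j$ in $L^r$ by dominated convergence (using $\eta_j,d_k\eta_j\in L^r$), while $\|d_0\phi_n\wedge\eta_j\|_{L^r\Omega^{k+1}(M,g)}\le C\,\|d_0\phi_n\|_{L^r\Omega^1(M,g)}\,\|\eta_j\|_{L^\infty\Omega^k(M,g)}\to 0$ precisely because $\eta_j\in L^\infty\Omega^k(M,g)$. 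Hence $(\phi_n\eta_j,\,d(\phi_n\eta_j))\to(\eta_j,\,d_k\eta_j)$ in $L^r\Omega^k(M,g)\oplus L^r\Omega^{k+1}(M,g)$, and since $d_{k,\min,r}$ is a closed operator we conclude $\eta_j\in\mathcal{D}(d_{k,\min,r})$ with $d_{k,\min,r}\eta_j=d_k\eta_j=d_{k,\max,r}\eta_j$.

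Finally I would pass to the weak limit. The graph of $d_{k,\min,r}$ is a norm-closed linear subspace of the reflexive Banach space $L^r\Omega^k(M,g)\oplus L^r\Omega^{k+1}(M,g)$, hence weakly closed. By the previous two steps, $(\eta_j,d_{k,\min,r}\eta_j)=(\eta_j,d_{k,\max,r}\eta_j)\rightharpoonup(\eta,d_{k,\max,r}\eta)$ weakly in this space (weak convergence in each factor being exactly what Lemma \ref{lemma} provides), so $(\eta,d_{k,\max,r}\eta)$ lies in the graph of $d_{k,\min,r}$; that is, $\eta\in\mathcal{D}(d_{k,\min,r})$ and $d_{k,\min,r}\eta=d_{k,\max,r}\eta$. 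As $\eta$ was an arbitrary element of $\mathcal{D}(d_{k,\max,r})$ we get $d_{k,\max,r}=d_{k,\min,r}$, and as $r\in[2,q]$ was arbitrary the theorem follows. The main obstacle is the middle step — obtaining the boundedness of $\eta_j$ and $d_k\eta_j$ — which is where the interplay of the Sobolev embedding, the Kato-class hypotheses and the identifications $\Delta_{\bullet,\mathrm{abs}}=\Delta_\bullet^{\mathcal{F}}$ enters, through the $L^r\to L^\infty$ smoothing of the heat semigroup; $q$-parabolicity is then used only to convert this boundedness into membership in the minimal domain.
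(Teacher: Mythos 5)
Your proposal is correct and follows essentially the same route as the paper: $r$-parabolicity for all $r\in[2,q]$, the weak approximation of Lemma \ref{lemma}, the $L^r\to L^\infty$ smoothing of the heat semigroup via Th. \ref{LpLqKato} to get bounded approximants, and parabolicity to place them in $\mathcal{D}(d_{k,\min,r})$ (you re-derive the cut-off argument that the paper simply cites as \cite[Prop. 2.5]{Fra}). The only cosmetic difference is the last step, where you invoke weak closedness of the graph of the closed operator $d_{k,\min,r}$ instead of testing against $\mathcal{D}(d^t_{k,\max,q'})$ via \eqref{naghen}; these are equivalent formulations of the same limiting argument.
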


\begin{proof}
By the fact that $\vol_g(M)<\infty$ and $(M,g)$ is $q$-parabolic we know that $(M,g)$ is $r$-parabolic for any $r\in [1,q]$. We will prove the theorem only for the case $r=q$. The remaining cases $2\leq r<q$ follow in the same manner. As in the previous lemma let  $$\mathcal{C}_q:=\{e^{-t\Delta_{k,\mathrm{abs}}}\eta:\  \eta\in \mathcal{D}(d_{k,\max,q}),\ t\in (0,1]\}.$$ As a first step we want to show that  $\mathcal{C}_q\subset \mathcal{D}(d_{k,\min,q})$.  As explained in the proof of Lemma \ref{lemma} we know that $\mathcal{C}_q\subset \mathcal{D}(d_{k,\max,q})$. Moreover the assumptions allow us to use  \eqref{klplq} to conclude that  $\mathcal{C}_q\subset L^{\infty}\Omega^k(M,g)$.  Thus we know that $\mathcal{C}_q\subset \mathcal{D}(d_{k,\max,q})\cap L^{\infty}\Omega^k(M,g)$ and so, by using the fact $(M,g)$ is $q$-parabolic, we are in position to use \cite[Prop. 2.5]{Fra} to conclude that  $\mathcal{C}_q\subset \mathcal{D}(d_{k,\min,q})$. Let us tackle now the statement of this theorem. Thanks to \eqref{naghen} it amounts to proving that given any $\eta\in \mathcal{D}(d_{k,\max,q})$ we have $$\int_Mg(\eta,d^t_{k,\max,q'}\omega)\dvol_g=\int_Mg(d_{k,\max,q}\eta,\omega)\dvol_g$$ for each $\omega\in \mathcal{D}(d^t_{k,\max,q'})$ and $q'=\frac{q}{q-1}$.
So let $\eta\in \mathcal{D}(d_{k,\max,q})$ and $\omega\in \mathcal{D}(d^t_{k,\max,q'})$  be  arbitrarily fixed. Thanks to Lemma \ref{lemma}, the inclusion $\mathcal{C}_q\subset \mathcal{D}(d_{k,\min,q})$ and \eqref{naghen} we have
\begin{align}
&  \nonumber \int_Mg(\eta,d^t_{k,\max,q'}\omega)\dvol_g=\lim_{j\rightarrow \infty}\int_Mg(\eta_j,d^t_{k,\max,q'}\omega)\dvol_g=\\
& \nonumber \lim_{j\rightarrow \infty}\int_Mg(d_{k,\min,q}\eta_j,\omega)\dvol_g=\lim_{j\rightarrow \infty}\int_Mg(d_{k,\max,q}\eta_j,\omega)\dvol_g=\int_Mg(d_{k,\max,q}\eta,\omega)\dvol_g
\end{align}
where $\{\eta_j\}_{j\in \mathbb{N}}\subset \mathcal{C}_q$ is any sequence defined as in Lemma \ref{lemma}. We can thus conclude that $$d_{k,\max,q}:L^q\Omega^k(M,g)\rightarrow L^q\Omega^{k+1}(M,g)\quad\quad \mathrm{equals}\quad\quad d_{k,\min,q}:L^q\Omega^k(M,g)\rightarrow L^q\Omega^{k+1}(M,g)$$ as desired.
\end{proof}

\begin{cor}
\label{againLpStokes}
In the setting of Th. \ref{LpStokes}. We have the following equalities: $$\overline{H}^k_{r,\max}(M,g)=\overline{H}^k_{r,\min}(M,g)\quad\quad \mathrm{and}\quad\quad H^k_{r,\max}(M,g)=H^k_{r,\min}(M,g)$$ with $2\leq r\leq q$. If in addition $M$ is oriented then the $L^r$-Stokes theorem holds true on $m-k-1$ forms for each $r\in [\frac{q}{q-1},2]$ and consequently $$\overline{H}^{m-k-1}_{r,\max}(M,g)=\overline{H}^{m-k-1}_{r,\min}(M,g)\quad\quad \mathrm{and}\quad\quad H^{m-k-1}_{r,\max}(M,g)=H^{m-k-1}_{r,\min}(M,g)$$ with $\frac{q}{q-1}\leq r\leq 2$.
\end{cor}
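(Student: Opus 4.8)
The plan is to read off the whole Corollary from Theorem~\ref{LpStokes}, whose hypotheses coincide with those assumed here. That theorem yields $d_{k,\max,r}=d_{k,\min,r}$ for every $r\in[2,q]$; write $d_{k,r}$ for this common operator. Then $\ker(d_{k,\max,r})=\ker(d_{k,\min,r})$ and $\im(d_{k,\max,r})=\im(d_{k,\min,r})$, so the spaces of $L^r$ $k$-cocycles already agree, and the two degree-$k$ equalities in the statement reduce to identifying the coboundary space $\im(d_{k-1,\max,r})$, and its closure $\overline{\im(d_{k-1,\max,r})}$, with their minimal counterparts.

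For the closures I would first dispose of $r=2$: the hypothesis $\Delta_{k,\mathrm{abs}}=\Delta_k^{\mathcal{F}}$ forces $\Delta_{k,\mathrm{abs}}=\Delta_{k,\mathrm{rel}}$ by Proposition~\ref{oneto}, hence $\mathcal{H}^k_{2,\mathrm{abs}}(M,g)=\mathcal{H}^k_{2,\mathrm{rel}}(M,g)$, and comparing the two orthogonal Hodge decompositions of $L^2\Omega^k(M,g)$ gives $\overline{\im(d_{k-1,\max,2})}=\overline{\im(d_{k-1,\min,2})}$. To pass to $r>2$ I would rerun the smoothing argument from the proof of Theorem~\ref{LpL2}: a maximal coboundary $\omega=d_{k-1,\max,r}\eta$ is approximated by $\omega_t:=e^{-t\Delta_{k,\mathrm{abs}}}\omega$, which by Proposition~\ref{commu} is smooth and exact, is uniformly $L^r$-bounded for $t\in(0,1]$ by \eqref{klqlq}, and converges to $\omega$ in $L^2\Omega^k(M,g)$ by strong continuity of the semigroup; extracting a weakly $L^r$-convergent subsequence and using \eqref{weak} to identify its limit with $\omega$, it remains to place each $\omega_t$ in $\overline{\im(d_{k-1,\min,r})}$, which follows from the $r=2$ case together with the $q$-parabolicity of $(M,g)$ via \cite[Prop.~2.5]{Fra}. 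A routine closed-range argument then upgrades the reduced equality to $H^k_{r,\max}(M,g)=H^k_{r,\min}(M,g)$ for all $r\in[2,q]$.

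For the oriented case I would transfer the statement along the Hodge star. The operator $*$ conjugates $d_{k,\max/\min,r}$ on $L^r\Omega^k(M,g)$ into $d^t_{m-k-1,\max/\min,r'}$ on $L^{r'}\Omega^{m-k}(M,g)$ (cf.\ the paragraph preceding Definition~\ref{StokesLp}), and by \eqref{Badjoint} the latter is the Banach adjoint of $d_{m-k-1,\min/\max,r}$; since a closed densely defined operator on a reflexive space is its own double adjoint, the identity $d_{k,\max,r}=d_{k,\min,r}$, valid for $r\in[2,q]$, becomes the $L^s$-Stokes identity on $(m-k-1)$-forms for $s$ in the conjugate range $[\frac{q}{q-1},2]$. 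Running the argument of the previous paragraph at degree $m-k-1$ — or, equivalently, feeding the cohomology equalities just obtained into the non-degenerate pairing \eqref{LpPoinca} — then produces $\overline{H}^{m-k-1}_{s,\max}(M,g)=\overline{H}^{m-k-1}_{s,\min}(M,g)$ and $H^{m-k-1}_{s,\max}(M,g)=H^{m-k-1}_{s,\min}(M,g)$ for $s\in[\frac{q}{q-1},2]$. The delicate point — and the only genuinely non-formal step — is the passage from the operator identity $d_{k,\max,r}=d_{k,\min,r}$ to the cohomological statements for $r>2$: the $L^2$ Hodge decomposition settles $r=2$ at once, but propagating exactness along the heat flow at degree $k-1$, a degree at which the Kato-class and Friedrichs hypotheses are not directly imposed, requires care.
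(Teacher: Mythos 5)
Your second half — deriving the $L^r$-Stokes theorem on $(m-k-1)$-forms for $r\in[\tfrac{q}{q-1},2]$ from the degree-$k$ identity by taking Banach adjoints via \eqref{Badjoint} and conjugating with the Hodge star through $d^t_k=\pm\,*d_{m-k-1}*$ — is exactly the paper's argument and is fine. The divergence, and the trouble, is in the cohomological equalities. The paper's proof derives only the operator identities $d_{k,\max,r}=d_{k,\min,r}$ and $d_{m-k-1,\max,r}=d_{m-k-1,\min,r}$ and treats the equalities of the (reduced and unreduced) cohomology groups as immediate consequences; it performs no analysis at degree $k-1$. You instead observe that, with the literal definitions, $H^k_{r,\max/\min}$ also involves $\im(d_{k-1,\max/\min,r})$, and you launch a separate program to identify these coboundary spaces. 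Your $r=2$ step (via $\Delta_{k,\mathrm{abs}}=\Delta_{k,\mathrm{rel}}$ and the two Hodge decompositions) can indeed be made to work, but the rest of the program cannot be carried out with the hypotheses actually available in Th. \ref{LpStokes}.

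Concretely: (i) your smoothing argument needs $\eta_t=e^{-t\Delta_{k-1,\mathrm{abs}}}\eta$ to lie in $L^r$, and in fact in $L^\infty\Omega^{k-1}(M,g)\cap\mathcal{D}(d_{k-1,\max,r})$ in order to invoke \cite[Prop. 2.5]{Fra}; this requires Th. \ref{LpLqKato} for the heat semigroup in degree $k-1$, hence $\ell^-_{k-1}\in\mathcal{K}(M)$ and $\Delta_{k-1,\mathrm{abs}}=\Delta_{k-1}^{\mathcal{F}}$, neither of which is assumed in Th. \ref{LpStokes} (the Kato and Friedrichs-equals-absolute hypotheses are imposed only in degrees $k$ and $k+1$). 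Your final sentence concedes exactly this point but does not repair it. (ii) Even granting the identification of closures, the upgrade from $\overline{H}^k_{r,\max}=\overline{H}^k_{r,\min}$ to $H^k_{r,\max}=H^k_{r,\min}$ is not a "routine closed-range argument": no closed-range or finite-dimensionality hypothesis is present in this setting (such assumptions appear in Th. \ref{LpL2}, not here), and equality of $\overline{\im(d_{k-1,\max,r})}$ with $\overline{\im(d_{k-1,\min,r})}$ does not by itself give equality of the images. (iii) The same obstruction recurs for degree $m-k-1$: rerunning the argument one degree down would need control of $e^{-t\Delta_{m-k-2,\mathrm{abs}}}$, and the pairing \eqref{LpPoinca} couples $\overline{H}^{m-k-1}_{r,\min/\max}$ with $\overline{H}^{k+1}_{r',\max/\min}$, a degree in which neither a Stokes identity nor an identification of coboundaries has been established. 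So the proposal is incomplete precisely at the step you yourself single out as the only non-formal one; the paper, by contrast, reads the cohomological statements directly off the Stokes identities and never enters the degree-$(k-1)$ analysis your route would require.
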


\begin{proof}
The first statement follows at once by Th. \ref{LpStokes}. Consider now the second assertion. Since $r\in [\frac{q}{q-1},2]$ we know that $r'=\frac{r}{r-1}\in [2,q])$ and so by applying Th. \ref{LpStokes} we know that $d_{k,\max,r'}:L^{r'}\Omega^k(M,g)\rightarrow L^{r'}\Omega^{k+1}(M,g)$ equals $d_{k,\min,r'}:L^{r'}\Omega^k(M,g)\rightarrow L^{r'}\Omega^{k+1}(M,g)$. Clearly this implies immediately that $d^t_{k,\max,r}:L^{r}\Omega^{k+1}(M,g)\rightarrow L^{r}\Omega^{k}(M,g)$ equals $d^t_{k,\min,r}:L^{r}\Omega^{k+1}(M,g)\rightarrow L^{r}\Omega^{k}(M,g)$. Since $M$ is oriented we have $d^t_k=(-1)^{mk+1}*d_{m-k-1}*$ which in turn is easily seen to imply $d^t_{k,\min/\max,r}=(-1)^{mk+1}*d_{m-k-1,\min/\max,r}*$. We can thus conclude that $d_{m-k-1,\max,r}:L^{r}\Omega^{m-k-1}(M,g)\rightarrow L^{r}\Omega^{m-k}(M,g)$ equals $d_{m-k-1,\min,r}:L^{r}\Omega^{m-k-1}(M,g)\rightarrow L^{r}\Omega^{m-k}(M,g)$ as desired. Finally the last assertion follows now immediately from the $L^r$-Stokes theorem. 
\end{proof}

\section{Geometric and topological applications}
This last section gathers various applications of the previous results to Thom-Mather stratified spaces and complex projective varieties with isolated singularities. Due to the extent of the subject we are forced to recall only what is strictly necessary for our aims. We start with a brief  introduction to smoothly Thom-Mather stratified pseudomanifolds and intersection cohomology.

\subsection{Background on Thom-Mather stratified spaces}
Since it will be used in the definition below we start by recalling that, given a topological space $Z$, $C(Z)$ stands for the cone over $Z$ that is, $C(Z)=Z\times [0,2)/\sim$ where $(p,t)\sim (q,r)$ if and only if $r=t=0$. We have now the following

\begin{defi}   
\label{thom}
 A smoothly Thom-Mather stratified pseudomanifold $X$  of dimension $m$  is a metrizable, locally compact, second countable space which admits a locally finite decomposition into a union of locally closed strata $\mathfrak{G}=\{Y_{\alpha}\}$, where each $Y_{\alpha}$ is a smooth, open and connected manifold, with dimension depending on the index $\alpha$. We assume the following:
\begin{enumerate}
\item[(i)] If $Y_{\alpha}$, $Y_{\beta} \in \mathfrak{G}$ and $Y_{\alpha} \cap \overline{Y}_{\beta} \neq \emptyset$ then $Y_{\alpha} \subset \overline{Y}_{\beta}$
\item[(ii)]  Each stratum $Y$ is endowed with a set of control data $T_{Y} , \pi_{Y}$ and $\rho_{Y}$ ; here $T_{Y}$ is a neighborhood of $Y$ in $X$ which retracts onto $Y$, $\pi_{Y} : T_{Y} \rightarrow Y$
is a fixed continuous retraction and $\rho_{Y}: T_{Y}\rightarrow [0, 2)$  is a continuous  function in this tubular neighborhood such that $\rho_{Y}^{-1}(0) = Y$. Furthermore, we require that if $Z \in \mathfrak{G}$ and $Z \cap T_{Y}\neq \emptyset$  then $(\pi_{Y} , \rho_{Y} ) : T_{Y} \cap Z \rightarrow Y\times [0,2) $ is a proper smooth submersion.
\item[(iii)] If $W, Y,Z \in \mathfrak{G}$, and if $p \in T_{Y} \cap T_{Z} \cap W$ and $\pi_{Z}(p) \in T_{Y} \cap Z$ then
$\pi_{Y} (\pi_{Z}(p)) = \pi_{Y} (p)$ and $\rho_{Y} (\pi_{Z}(p)) = \rho_{Y} (p)$.
\item[(iv)] If $Y,Z \in \mathfrak{G}$, then
$Y \cap \overline{Z} \neq \emptyset \Leftrightarrow T_{Y} \cap Z \neq \emptyset$ ,
$T_{Y} \cap T_{Z} \neq \emptyset \Leftrightarrow Y\subset \overline{Z}, Y = Z\ or\ Z\subset \overline{Y} .$
\item[(v)]  For each $Y \in \mathfrak {G}$, the restriction $\pi_{Y} : T_{Y}\rightarrow Y$ is a locally trivial fibration with fibre the cone $C(L_{Y})$ over some other stratified space $L_{Y}$ (called the link over $Y$ ), with atlas $\mathcal{U}_{Y} = \{(\phi,\mathcal{U})\}$ where each $\phi$ is a trivialization
$\pi^{-1}_{Y} (U) \rightarrow U \times C(L_{Y} )$, and the transition functions are stratified isomorphisms  which preserve the rays of each conic
fibre as well as the radial variable $\rho_{Y}$ itself, hence are suspensions of isomorphisms of each link $L_{Y}$ which vary smoothly with the variable $y\in U$.
\item[(vi)] For each $j$ let $X_{j}$ be the union of all strata of dimension less or equal than $j$, then 
$$
X_{m-1}=X_{m-2}\ \text{and  $X\setminus X_{m-2}$ dense\ in $X$}
$$
\end{enumerate}
\end{defi}

The \emph{depth} of a stratum $Y$ is largest integer $k$ such that there exists a chain of strata $Y=Y_{k},...,Y_{0}$ such that $Y_{j}\subset \overline{Y_{j-1}}$ for $1\leq j\leq k.$ A stratum of maximal depth is always a closed subset of $X$.  The  maximal depth of any stratum in $X$ is called the \emph{depth of $X$} as stratified spaces. Note that if $\mathrm{depth}(X)=1$ then for any singular stratum $Y\in \mathfrak{G}$ the corresponding link $L_Y$ is a smooth manifold. Consider now the filtration
\begin{equation}
X = X_{m}\supset X_{m-1}= X_{m-2}\supset X_{m-3}\supset...\supset X_{0}.
\label{pippo}
\end{equation}
 We refer to the open subset $X\setminus X_{m-2}$ as the regular set of $X$ while the union of all other strata is the singular set of $X$,
$$\reg(X):=X\setminus \sing(X)\quad \text{with}\quad \sing(X):=\bigcup_{Y\in \mathfrak{G}, \depth(Y)>0 }Y. $$
The dimension of $X$ is by definition the dimension of $\reg(X)$. Given two smoothly Thom-Mather  stratified pseudomanifolds  $X$ and $X'$,  a stratified isomorphism between them is a
homeomorphism $F: X\rightarrow X'$ which carries the open strata of $X$ to the open strata of $X'$
diffeomorphically, and such that $\pi'_{F(Y )}\circ  F = F \circ \pi_Y$ , $\rho'_{F(Y)}\circ F=\rho_Y$  for all $Y\in \mathfrak{G}(X)$.
For more details, properties and comments we refer to \cite{ALMP}, \cite{BHS}, \cite{RrHS},  \cite{JMA} and \cite{ver}. 
We recall in addition that important examples of smoothly Thom-Mather stratified pseudomanifolds are provided by complex projective varieties  or  quotient spaces $M/G$ without one codimensional strata, with $M$ a manifold and $G$ a Lie group acting properly on $M$.\\ 
As a next step we introduce the class of smooth Riemmanian metrics on $\reg(X)$  we will work with. The definition is given by induction on the depth of $X$.  
We recall first that  two Riemannian metrics $g$ and $h$ on a manifold $M$  are said to be \emph{quasi-isometric}, briefly $g\sim h$, if there exists a real number $c>0$ such that $c^{-1}h\leq g\leq ch$.

\begin{defi}
\label{iter}
Let $X$ be a smoothly Thom-Mather stratified pseudomanifold and let $g$ be a Riemannian metric on $\reg(X)$. If $\depth(X)=0$ that is, $X$ is a smooth manifold, an iterated  conic  metric on $X$ is  any smooth Riemannian metric on $X$. Suppose now that $\depth(X)=k$ and that the definition of  iterated  conic metric is given in the case $\depth(X)\leq k-1$; then we call a smooth Riemannian metric $g$ on $\reg(X)$ an iterated  conic metric   if it satisfies the following properties:
\begin{itemize}
\item Let $Y$ be a stratum of $X$ such that $Y\subset X_{i}\setminus  X_{i-1}$; by   definition \ref{thom} for each $q\in Y$ there exist an open neighbourhood $U$ of $q$ in $Y$ such that 
$$
\phi:\pi_{Y}^{-1}(U)\longrightarrow U\times C(L_{Y})
$$
 is a stratified isomorphism; in particular, 
$$
\phi:\pi_{Y}^{-1}(U)\cap \reg(X)\longrightarrow U\times \reg(C(L_{Y}))
$$
is a smooth diffeomorphism. Then, for each $q\in Y$, there exists one of these trivializations $(\phi,U)$ such that $g$ restricted on $\pi_{Y}^{-1}(U)\cap \reg(X)$ satisfies the following properties:
\begin{equation} 
(\phi^{-1})^{*}(g|_{\pi_{Y}^{-1}(U)\cap \reg(X)})\sim  dr^2+h_{U}+r^2g_{L_{Y}}
\label{yhn}
\end{equation}
 where  $h_{U}$ is a Riemannian metric defined over $U$, $g_{L_{Y}}$ is an iterated conic metric  on $\reg(L_{Y})$, $dr^2+h_{U}+r^2g_{L_{Y}}$ is a Riemannian metric of product type on $U\times \reg(C(L_{Y}))$ and with $\sim$ we mean \emph{quasi-isometric}. 
\end{itemize}
\label{edge}
\end{defi} 
Sometimes these metrics are called incomplete iterated edge metrics \cite{ALMP}  or wedge metrics \cite{ALMP2}.
It is easy to check that $(\reg(X),g)$ is an incomplete Riemannian manifold and that $\vol_g(\reg(X))<\infty$ in case $X$ is compact. A non obvious property is that iterated conic metrics do exist. We refer to  \cite{ALMP} or \cite{BHS} for existence results.  As we will recall later the importance of this class of metrics stems from its deep connection with the topology of $X$. In particular the $L^p$-cohomology groups of $\reg(X)$ with respect  to an iterated conic metric are isomorphic to certain intersection cohomology  grups of $X$.


\subsection{Intersection cohomology and $L^p$-cohomology}
In the first part of this subsection we recall the definition of intersection cohomology and the corresponding Poincar\'e duality.
We will be very brief and we refer to the seminal papers of  Goresky  and  MacPherson \cite{GoMac}-\cite{GoMac2} and to the  monographs \cite{Bana}, \cite{Boreal} and \cite{KiWoo} for  in-depth treatments of this subject. For the sake of simplicity we will state everything in the setting of smoothly Thom-Mather stratified pseudomanifolds but we warn the reader that the same results hold true in the more general class of topologically stratified pseudomanifolds. As a first step we need to recall the notion of perversity.

\begin{defi}
\label{perv}
 A perversity is a function $p:\{2,3,4,...,n\}\rightarrow \mathbb{N}$ such that
\begin{equation} p(2)=0\ and\ p(k)\leq p(k+1)\leq p(k)+1.
\end{equation}
\label{per}
\end{defi}

\noindent Example of perversities are the zero perversity $0(k)=0$, the top perversity $t(k)=k-2$, the upper middle perversity $\overline{m}(k)=[\frac{k-1}{2}]$ and the lower middle perversity $\underline{m}(k)=[\frac{k-2}{2}]$, with $[\bullet]$ denoting the integer part of $\bullet$. Note that $\underline{m}(k)=\overline{m}(k)$ when $k$ is even. Consider now the standard $i$-simplex $\Delta_{i}\subset \mathbb{R}^{i+1}$.  The $j$-skeleton of $\Delta_{i}$ is the set of $j$-subsimplices. Given a smoothly Thom-Mather stratified pseudomanifold $X$ we say that a singular $i$-simplex in $X$, i.e. a continuous map $\sigma:\Delta_{i}\rightarrow X$, is $p$-allowable if
$$ 
\sigma^{-1}(X_{m-k}-X_{m-k-1})\subset \{(i-k+p(k))-\mathrm{skeleton\ of}\ \Delta_{i}\}\ \mathrm{for\ all}\ k\geq 2.
$$
Note that to each singular stratum $Y\subset X$ $p$ assigns the value corresponding to $\mathrm{cod}(Y)$. Given a field $F$, the elements of the space $I^{p}S_{i}(X,F)$ are defined as the finite linear combinations with coefficients in $F$ of singular $i$-simplex $\sigma:\Delta_{i}\rightarrow X$ such that $\sigma$ and $\partial \sigma$ are both $p$-allowable. It is clear that $(I^{p}S_{i}(X,F), \partial_{i})$ is a complex and the  singular intersection homology groups with respect to the perversity $p$, $I^{p}H_{i}(X,F)$, are defined as  the homology groups of this complex. One of the main success of the intersection homology is that it allows to restore a kind of Poincar\'e duality on singular spaces:

\begin{teo}
\label{duality}
Let $F$ be a field, $X$ a compact and $F$-oriented smoothly Thom-Mather stratified pseudomanifold of dimension $n$ and $p$, $q$  general perversities on $X$ such that $p+q=t$. Then the following isomorphism holds:
\begin{equation}
I^{p}H_{i}(X,F)\cong \mathrm{Hom}(I^{q}H_{n-i}(X,F), F).
\end{equation}
\label{mzb}
\end{teo}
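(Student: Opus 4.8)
The plan is to leave the singular-chain picture and argue in the derived category of constructible sheaves, where the statement becomes an instance of Verdier duality. First I would recall the Deligne construction: for each perversity $p$ one builds a complex of sheaves $\mathcal{P}^{\bullet}_p$ on $X$ by starting from the constant sheaf on $\reg(X)$ (placed in the appropriate degree, $F_{\reg(X)}[n]$) and then, crossing the skeleta $X_{m-k}$ one codimension at a time, applying the functor $\tau_{\le p(k)-n}\,Rj_{k*}$, where $j_k\colon X\setminus X_{m-k}\hookrightarrow X\setminus X_{m-k-1}$ is the open inclusion. Two standard facts (see \cite{GoMac2}, \cite{Boreal}, \cite{Bana}) do most of the work: (i) for $X$ compact the hypercohomology $\mathbb{H}^{i}(X;\mathcal{P}^{\bullet}_p)$ is a reindexed copy of the intersection homology $I^{p}H_{n-i}(X,F)$; and (ii) $\mathcal{P}^{\bullet}_p$ is characterized, up to quasi-isomorphism, by an axiom list — normalization on $\reg(X)$, a \emph{support} bound on the stalk cohomology along each stratum governed by $p(k)$ with $k=\cod Y$, and a dual \emph{cosupport}/attachment bound. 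This reduces the theorem to an identification of two complexes of sheaves.

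Next I would compute the Verdier dual $\mathbb{D}_X\mathcal{P}^{\bullet}_p$ and show it is quasi-isomorphic to $\mathcal{P}^{\bullet}_q[n]$. Since $X$ is $F$-oriented, $\mathbb{D}_{\reg(X)}F_{\reg(X)}\simeq F_{\reg(X)}[m]$, so the normalization axiom is matched on the open stratum. Then, stratum by stratum, one uses the compatibility of Verdier duality with truncation, $\mathbb{D}\,\tau_{\le a}\simeq\tau_{\ge -a}\,\mathbb{D}$, together with base change, to check that $\mathbb{D}_X\mathcal{P}^{\bullet}_p$ satisfies precisely the support and cosupport conditions that define $\mathcal{P}^{\bullet}_q[n]$; this is exactly the point where the hypothesis $p+q=t$, i.e. $p(k)+q(k)=k-2$, is used, since it converts the support bound for $p$ in codimension $k$ into the cosupport bound for $q$. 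By the uniqueness half of the axiomatic characterization, $\mathbb{D}_X\mathcal{P}^{\bullet}_p\simeq\mathcal{P}^{\bullet}_q[n]$. Passing to hypercohomology and invoking that over a field Verdier duality gives $\mathbb{H}^{i}(X;\mathbb{D}_X\mathcal{P}^{\bullet}_p)\cong\Hom\!\big(\mathbb{H}^{-i}(X;\mathcal{P}^{\bullet}_p),F\big)$ for $X$ compact, the asserted isomorphism $I^{p}H_{i}(X,F)\cong\Hom\!\big(I^{q}H_{n-i}(X,F),F\big)$ drops out after re-indexing.

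The main obstacle is the local verification that $\mathbb{D}_X\mathcal{P}^{\bullet}_p$ meets the attachment condition of $\mathcal{P}^{\bullet}_q[n]$ at each singular stratum $Y$: near $Y$ the space looks like $U\times cL_Y$, and the check reduces to the cone formula for the stalk intersection cohomology of $cL_Y$ — vanishing above a degree controlled by $p(\cod Y)$ — made self-dual by the orientation data. Everything else (that the Deligne sheaf computes intersection homology, and the behaviour of $\mathbb{D}$ and $\tau$ under the six operations) is standard homological algebra of constructible complexes. A more hands-on alternative that avoids sheaves is the original geometric argument: fix a stratified triangulation and its dual block decomposition, define the pairing of a $p$-allowable $i$-cycle with a $q$-allowable $(n-i)$-cycle by transverse geometric intersection (which lands in dimension $\le 0$ exactly because $p+q\le t$), check it descends to homology, and prove nondegeneracy by induction on $\depth(X)$ via a Mayer--Vietoris comparison of the two sides; but the inductive step there again rests on the same cone computation, so the essential difficulty is the same either way.
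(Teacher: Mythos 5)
The paper does not prove Theorem \ref{duality} itself but quotes it from \cite{GoMac2}, \cite{Bana} and \cite{KiWoo}, and your argument via the Deligne construction, the axiomatic characterization of $\mathcal{P}^{\bullet}_p$, and the identification of $\mathbb{D}_X\mathcal{P}^{\bullet}_p$ with the Deligne sheaf of the complementary perversity (followed by hypercohomology duality over the field $F$ on the compact space $X$) is exactly the standard proof given in those references, so approach and substance agree. One bookkeeping remark: with your stated normalization $\mathcal{P}^{\bullet}_p|_{\reg(X)}\simeq F[n]$ (and truncations $\tau_{\le p(k)-n}$) the dual comes out as $\mathcal{P}^{\bullet}_q[-n]$ rather than $\mathcal{P}^{\bullet}_q[n]$, the latter shift belonging to the degree-zero normalization; this only affects the re-indexing at the very end, not the validity of the argument.
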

\begin{proof}
A proof can be found in \cite{GoMac2}, \cite{Bana}, \cite{KiWoo}.
\end{proof}

We remark that if we denote with $I^{p}H^{i}(X,F)$  the cohomology of the complex $(\mathrm{Hom}(I^{p}S_{i}(X,F), F), (\partial_{i})^{*})$  then, using the fact that $F$ is a field, we have $I^{p}H^{i}(X,F)\cong \mathrm{Hom}(I^{p}H_{i}(X,F), F).$  This concludes our very succinct summary on intersection cohomology. Now we recall some important results that relate analytic data on $\reg(X)$, such as the $L^p$-de Rham cohomology and the Hodge-de Rham operator $d+d^t$, with the intersection cohomology of $X$. Let $X$ be a compact smoothly  Thom-Mather stratified pseudomanifold of dimension $m$ and let $g$ be an iterated conic metric on $\reg(X)$. Given $2\leq r<\infty$ we define the following perversities: $$p_{r}(k):=[[k/r]]\quad\quad \mathrm{and}\quad\quad q_r=t-p_r$$ with $[[\bullet]]$ denoting the greatest integer number strictly smaller than $\bullet$ and $t$ the top perversity. It is an easy exercise to check that $p_r$ and $q_r$ are well defined, that is the properties required in Def. \ref{perv} are fulfilled. Moreover for $r=2$ we have $p_r=\overline{m}$, $q_r=\underline{m}$ while if we define $\ell:=\max\{\mathrm{cod}(Y): Y\in\mathfrak{G}, \depth(Y)>0\}$ then for any $r\geq \ell$ we have $p_r=0$ and $q_r=t$. We have now the following

\begin{teo}
\label{LpInt}
Let $X$ be a compact smoothly oriented Thom-Mather stratified pseudomanifold of dimension $m$ and let $g$ be an iterated conic metric on $\reg(X)$. We have the following isomorphisms for each $k=0,...,m$ and $2\leq r<\infty$:
$$H^k_{r,\max}(\reg(X),g)\cong I^{p_r}H_{m-k}(X,\mathbb{R})\cong I^{q_r}H^k(X,\mathbb{R}).$$
\end{teo}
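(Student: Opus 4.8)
The plan is to treat the two isomorphisms separately. The second one, $I^{p_r}H_{m-k}(X,\mathbb{R})\cong I^{q_r}H^k(X,\mathbb{R})$, I would obtain for free from Poincar\'e duality (Th.~\ref{duality}) applied to the complementary pair $p_r+q_r=t$, combined with the identification $I^{q_r}H^k=\Hom(I^{q_r}H_k,\mathbb{R})$ noted after that theorem; so the real work is the first isomorphism $H^k_{r,\max}(\reg(X),g)\cong I^{p_r}H_{m-k}(X,\mathbb{R})$. This is by now standard (for $r=2$ it is Cheeger's theorem, and the general case is treated in \cite{Val}, \cite{BYOU}), so I would only indicate the sheaf-theoretic proof. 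On $X$ one forms the complex of sheaves $\mathcal{L}^\bullet_r$ associated to the presheaf $U\mapsto(L^r\Omega^\bullet(\reg(X)\cap U,g),d_{\bullet,\max,r})$; because $d_{\bullet,\max,r}$ is defined distributionally this is genuinely a complex of sheaves. Since $X$ is metrizable hence paracompact, and since multiplication by a smooth function that is bounded with bounded differential preserves $\mathcal{D}(d_{\bullet,\max,r})$ and is $L^r$-bounded, each $\mathcal{L}^k_r$ is fine once one exhibits a partition of unity on $X$ by such functions subordinate to a cover by distinguished neighbourhoods — these are built from smooth functions of the control-data radial coordinates $\rho_Y$, whose differentials are uniformly bounded in the model metric $dr^2+h_U+r^2g_{L_Y}$ of \eqref{yhn}. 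Fineness gives $\mathbb{H}^k(X;\mathcal{L}^\bullet_r)=H^k(\Gamma(X;\mathcal{L}^\bullet_r))$, and since $X$ is compact a finite-cover argument identifies $\Gamma(X;\mathcal{L}^\bullet_r)$ with the honest global $L^r$ complex, so the left-hand side is $H^k_{r,\max}(\reg(X),g)$.

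The crux is the computation of the stalk cohomology of $\mathcal{L}^\bullet_r$ along the filtration \eqref{pippo}. At a point of $\reg(X)$ the ordinary $L^r$-Poincar\'e lemma on a ball gives $\mathbb{R}$ in degree $0$. At a point of a singular stratum $Y$ of codimension $c$ with link $L_Y$ (so $\dim L_Y=c-1$), the quasi-isometry \eqref{yhn} together with the quasi-isometry invariance of $L^r$-cohomology reduces the stalk to the $L^r$-cohomology of $U\times\reg(C(L_Y))$ with the model metric, and a K\"unneth argument reduces that to the $L^r$-cohomology of the model cone $\reg(C(L_Y))$ with metric $dr^2+r^2g_{L_Y}$. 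There the pointwise norm of a link-degree-$j$ form scales like $r^{-j}$ while the volume element scales like $r^{c-1}dr$, so $L^r$-integrability near the tip forces $j<c/r$, i.e. $j\le[[c/r]]=p_r(c)$; one finds the cone $L^r$-cohomology to be $H^j_{r,\max}(\reg(L_Y),g_{L_Y})$ for $j\le p_r(c)$ and zero for $j>p_r(c)$, the link cohomology being supplied by the inductive hypothesis on $\depth$. These are precisely the support and cosupport conditions characterising the Deligne sheaf $IC^\bullet_{p_r}$ of perversity $p_r$; by Goresky--MacPherson's axiomatics $\mathcal{L}^\bullet_r$ is quasi-isomorphic to $IC^\bullet_{p_r}$, whence $\mathbb{H}^k(X;\mathcal{L}^\bullet_r)\cong I^{p_r}H_{m-k}(X,\mathbb{R})$ in the usual indexing, which is the desired first isomorphism. (Codimension-one strata do not occur by item (vi) of Def.~\ref{thom}, so $p_r$ is only ever evaluated on $\{2,\dots,m\}$, consistently with Def.~\ref{perv}.)

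The main obstacle, as always with these $L^p$-de Rham theorems, is carrying the argument through in the \emph{iterated} case $\depth(X)>1$: one must run the induction on depth carefully, producing cut-off functions near strata of all depths simultaneously with $L^r$-bounded differentials, justify the K\"unneth reduction in a neighbourhood that is itself only a fibre bundle of iterated cones, and check that the cohomology truncation of the model cone behaves correctly under the wedge degeneration $r^2g_{L_Y}$ — it is this recursion that forces exactly $p_r(k)=[[k/r]]$. For $\depth(X)=1$, where $L_Y$ is a smooth closed manifold and $g_{L_Y}$ an honest metric, everything collapses to the classical conic computation, and that is the base case I would spell out in full.
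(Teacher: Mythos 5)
Your proposal is correct and takes essentially the same route as the paper: the paper obtains the second isomorphism from Th.~\ref{duality} exactly as you do, and for the first it simply invokes Cheeger, Nagase and Youssin (\cite{JC}, \cite{Masa}, \cite{BYOU}), whose content your sheaf-theoretic sketch (fineness of the $L^r$-sheaves, the local cone computation forcing the cutoff $j\le p_r(c)$, and the Deligne-sheaf axiomatics) correctly reproduces in outline. One small attribution slip: \cite{Val} concerns $L^{\infty}$-cohomology, so the general case $2\le r<\infty$ should be credited to \cite{BYOU} alone.
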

\begin{proof}
This first isomorphism is due to Cheeger and Nagase for $r=2$ and to Youssin for $2\leq r<\infty$, see \cite{JC}, \cite{Masa} and \cite{BYOU}, respectively. The second isomorphism  follows by Th. \ref{duality}.
\end{proof}

\begin{cor}
\label{closed}
In the setting of theorem \ref{LpInt}. For any $k=0,...,m$ and $2\leq r<\infty$ we have the following properties
\begin{enumerate}
\item $H^k_{r,\max}(\reg(X),g)$ is finite dimensional;
\item $\im(d_{k,\max,r})$ is closed in $L^r\Omega^{k+1}(\reg(X),g)$ and so $\overline{H}^k_{r,\max}(\reg(X),g)=H^k_{r,\max}(\reg(X),g)$.
\end{enumerate}
\end{cor}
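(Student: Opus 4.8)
The plan is to deduce both items directly from Theorem \ref{LpInt}, reducing everything to two standard inputs: the finite dimensionality of the intersection homology of a compact pseudomanifold, and the general functional-analytic fact recalled in the first section, namely that for $1<p<\infty$ a finite dimensional $L^p$-de Rham cohomology group $H^k_{p,\max}(M,g)$ forces $\im(d_{k-1,\max,p})$ to be closed and $H^k_{p,\max}(M,g)=\overline{H}^k_{p,\max}(M,g)$.

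First I would prove item (1). Fixing $k\in\{0,\dots,m\}$ and $2\le r<\infty$, Theorem \ref{LpInt} identifies $H^k_{r,\max}(\reg(X),g)$ with $I^{p_r}H_{m-k}(X,\mathbb{R})$. Since $X$ is compact, this intersection homology group is finite dimensional for every perversity and in every degree; I would simply cite this from \cite{Bana} or \cite{KiWoo} (it can also be obtained by induction on $\depth(X)$, combining the cone formula for the intersection homology of $C(L_Y)$ with a Mayer--Vietoris argument over a finite cover of $X$, or read off from the constructibility of the intersection chain sheaf). Hence $H^k_{r,\max}(\reg(X),g)$ is finite dimensional.

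Next I would prove item (2). Since $2\le r<\infty$ lies in $(1,\infty)$ and, by item (1) applied in degree $k+1$, the group $H^{k+1}_{r,\max}(\reg(X),g)$ is finite dimensional — the case $k=m$ being trivial because $L^r\Omega^{m+1}(\reg(X),g)=\{0\}$ — the general fact recalled in the first section yields that $\im(d_{k,\max,r})$ is closed in $L^r\Omega^{k+1}(\reg(X),g)$. Applying the same fact in degree $k$, using that $H^k_{r,\max}(\reg(X),g)$ is finite dimensional by item (1), gives the equality $\overline{H}^k_{r,\max}(\reg(X),g)=H^k_{r,\max}(\reg(X),g)$.

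There is essentially no obstacle here: the statement is a corollary, and the only ingredient not already available in the paper is the finite dimensionality of the intersection homology of a compact pseudomanifold, which is classical. Everything else is a bookkeeping translation through the isomorphisms of Theorem \ref{LpInt} and the closed-image criterion.
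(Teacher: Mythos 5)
Your proposal is correct and follows essentially the same route as the paper: item (1) is read off from Theorem \ref{LpInt} together with the finite dimensionality of the intersection (co)homology of the compact pseudomanifold $X$, and item (2) is the standard closed-range consequence of finite dimensional $L^r$-cohomology (the paper cites Youssin for this; you invoke the equivalent fact already recalled in Section 1). Your explicit handling of the degree shift (applying finiteness in degree $k+1$ to close $\im(d_{k,\max,r})$, with the trivial case $k=m$) is a slightly more careful spelling-out of what the paper leaves implicit, but it is the same argument.
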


\begin{proof}
The first property above follows by the fact that $X$ is compact and thus $I^{q_r}H^k(X,\mathbb{R})$ is finite dimensional. The second property follow by the first one, see \cite[pag. 578]{BYOU}.
\end{proof}

\begin{cor}
\label{DLpInt}
In the setting of Th. \ref{LpInt}. For any $k=0,...,m$ and $2\leq r<\infty$ we have the following isomorphisms
$$H^k_{r',\min}(\reg(X),g)\cong I^{p_r}H^k(X,\mathbb{R})\cong I^{q_r}H_{m-k}(X,\mathbb{R}).$$ with $r'=\frac{r}{r-1}$.
Moreover  $H^k_{r',\min}(\reg(X),g)$ is finite dimensional, $\im(d_{k,\min,r'})$ is closed in $L^{r'}\Omega^{k+1}(\reg(X),g)$ and  $\overline{H}^k_{r',\min}(\reg(X),g)=H^k_{r',\min}(\reg(X),g)$.
\end{cor}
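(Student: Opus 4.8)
The plan is to deduce the corollary from Theorem \ref{LpInt} and Corollary \ref{closed} by a double duality argument: on the analytic side I would use the $L^p$-Poincar\'e pairing \eqref{LpPoinca} together with the identity $d^t_{j,\min/\max,p}=\pm *d_{m-j-1,\min/\max,p}*$ (available since $\reg(X)$ is oriented), and on the topological side the intersection (co)homology Poincar\'e duality of Theorem \ref{duality}, recalling that the perversities satisfy $p_r+q_r=t$. Note that $r'=r/(r-1)\in(1,2]$, so \eqref{Badjoint} and \eqref{LpPoinca} all apply. The first step is to prove that $\im(d_{k,\min,r'})$ is closed in $L^{r'}\Omega^{k+1}(\reg(X),g)$, which at once gives $\overline{H}^k_{r',\min}(\reg(X),g)=H^k_{r',\min}(\reg(X),g)$. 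By Corollary \ref{closed} applied with $m-k-1$ in place of $k$, the image of $d_{m-k-1,\max,r}$ is closed in $L^r\Omega^{m-k}(\reg(X),g)$. Since $d^t_{m-k-1,r',\min}=(d_{m-k-1,r,\max})^*$ by \eqref{Badjoint} and the Banach adjoint of a closed, densely defined operator with closed range again has closed range (this is the last assertion of item \textbf{e)} in Subsection 1.1), $\im(d^t_{m-k-1,\min,r'})$ is closed in $L^{r'}\Omega^{m-k-1}(\reg(X),g)$; applying $d^t_{m-k-1,\min,r'}=\pm *d_{k,\min,r'}*$ and the fact that $*$ is an isometric isomorphism of $L^{r'}$-spaces converts this into closedness of $\im(d_{k,\min,r'})$.

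For the finite dimensionality and the first isomorphism, I would apply \eqref{LpPoinca} with $p=r$ in degree $m-k$: the pairing $\overline{H}^{m-k}_{r,\max}(\reg(X),g)\times\overline{H}^{k}_{r',\min}(\reg(X),g)\to\mathbb{R}$ is non-degenerate. By Corollary \ref{closed} the left factor equals $H^{m-k}_{r,\max}(\reg(X),g)$, which is finite dimensional; a non-degenerate pairing between a finite-dimensional space and an arbitrary vector space forces the latter to be finite dimensional of the same dimension (the map $w\mapsto B(\cdot,w)$ embeds the second space into the dual of the first, and then the symmetric statement yields equality of dimensions). Hence $H^k_{r',\min}(\reg(X),g)$ is finite dimensional and isomorphic to $\big(H^{m-k}_{r,\max}(\reg(X),g)\big)^*$. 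Theorem \ref{LpInt} in degree $m-k$ gives $H^{m-k}_{r,\max}(\reg(X),g)\cong I^{p_r}H_k(X,\mathbb{R})$, so $H^k_{r',\min}(\reg(X),g)\cong\Hom(I^{p_r}H_k(X,\mathbb{R}),\mathbb{R})\cong I^{p_r}H^k(X,\mathbb{R})$, the last step being the universal-coefficients remark recorded after Theorem \ref{duality}.

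It then remains to identify $I^{p_r}H^k(X,\mathbb{R})$ with $I^{q_r}H_{m-k}(X,\mathbb{R})$, which is precisely Theorem \ref{duality} for the complementary perversities $p_r,q_r$ (using $p_r+q_r=t$), once more combined with $I^{p_r}H^k\cong\Hom(I^{p_r}H_k,\mathbb{R})$. I do not anticipate a genuine obstacle: the argument is a routine transfer of Theorem \ref{LpInt} and Corollary \ref{closed} through two Poincar\'e dualities, and the only point demanding care is the bookkeeping of degrees and of the $\min/\max$ labels in the adjoint and Hodge-star identities, together with checking that every $L^p$-exponent involved stays in $(1,\infty)$ so that reflexivity, \eqref{Badjoint} and the non-degeneracy in \eqref{LpPoinca} may legitimately be invoked.
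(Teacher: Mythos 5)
Your proposal is correct and follows essentially the same route as the paper's proof: deduce closedness on the minimal $L^{r'}$ complex from Cor. \ref{closed} via \eqref{Badjoint}, the closed-range statement in item \textbf{e)} and the Hodge star, then combine the non-degenerate pairing \eqref{LpPoinca} with Th. \ref{LpInt}, Th. \ref{duality} and the universal-coefficient remark to obtain $H^k_{r',\min}(\reg(X),g)\cong I^{p_r}H^k(X,\mathbb{R})\cong I^{q_r}H_{m-k}(X,\mathbb{R})$. The only nit is that $\overline{H}^k_{r',\min}(\reg(X),g)=H^k_{r',\min}(\reg(X),g)$ requires closedness of $\im(d_{k-1,\min,r'})$ rather than of $\im(d_{k,\min,r'})$, but your closedness argument applies verbatim in every degree, so this is merely an index relabelling.
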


\begin{proof}
The isomorphisms  are a consequence of Th. \ref{LpInt}, Cor. \ref{closed} and Poincar\'e duality. More precisely thanks to Th. \ref{LpInt} and Cor. \ref{closed} we know that $H^k_{r,\max}(\reg(X),g)$ is finite dimensional and thus $\im(d_{k-1,\max,r})$ is closed in $L^r\Omega^{k}(\reg(X),g)$ for each $k=0,...,m$. As recalled in the introduction this in turn implies that $\im(d_{m-k,\min,r'})$ is closed in $L^{r'}\Omega^{m-k+1}(\reg(X),g)$ for each $k=0,...,m$. Now Poincar\'e duality for the $L^r$-cohomology \eqref{LpPoinca} tells us that $H^{m-k}_{r',\min}(\reg(X),g)\cong (H^{k}_{r,\max}(\reg(X),g))^*$. Eventually we proved that $H^{m-k}_{r',\min}(\reg(X),g)\cong (H^{k}_{r,\max}(\reg(X),g))^*$ $\cong (I^{p_r}H_{m-k}(X,\mathbb{R}))^*$ $\cong I^{p_r}H^{m-k}(X,\mathbb{R})$ $\cong I^{q_r}H_k(X,\mathbb{R})$. Equivalently  $H^k_{r',\min}(\reg(X),g)\cong I^{p_r}H^k(X,\mathbb{R})\cong I^{q_r}H_{m-k}(X,\mathbb{R})$.  
\end{proof}

We have now the following 

\begin{defi}
\label{Witt-Normal}
Let $X$ be a smoothly Thom-Mather stratified pseudomanifold. We say that $X$ is Witt if for any $Y\in \frak{G}$ of odd codimension, i.e. $\mathrm{cod}(Y)=2l_Y+1$, we have $I^{\underline{m}}H^{l_Y}(L_Y,\mathbb{R})=0$. We say that $X$ is normal if for any $Y\in \frak{G}$ the corresponding link $L_Y$ is connected.
\end{defi}

In the next theorem we collect important properties of Witt/normal smoothly Thom-Mather stratified pseudomanifold. In particular we will see that Witt spaces enjoy a true Poincar\'e duality:

\begin{teo}
\label{mischiozzo}
Let $F$ be a field and let $X$ be a compact and $F$-oriented smoothly Thom-Mather stratified pseudomanifold. 
\begin{itemize}
\item If $X$ is Witt then $I^{\underline{m}}H_{k}(X,F)\cong \mathrm{Hom}(I^{\underline{m}}H_{n-k}(X,F), F)$.
\item If $X$ is normal then $I^tH_k(X,\mathbb{R})\cong H_k(X,\mathbb{R})$ and $I^0H_k(X,\mathbb{R})\cong H^{m-k}(X,\mathbb{R})$.
\end{itemize}
\end{teo}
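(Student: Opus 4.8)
The plan is to derive both isomorphisms from the Poincar\'e duality of Theorem~\ref{duality} together with two classical comparison results of intersection homology theory: Siegel's characterization of Witt spaces via the coincidence of the two middle perversities, and the identification of top- and zero-perversity intersection homology with ordinary (co)homology on normal pseudomanifolds.

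For the first item one first records the elementary identity $\underline{m}+\overline{m}=t$, which is checked by a case distinction on the parity of the codimension. Theorem~\ref{duality} applied with the pair of perversities $(p,q)=(\underline{m},\overline{m})$ then yields
$$I^{\underline{m}}H_{k}(X,F)\cong \mathrm{Hom}(I^{\overline{m}}H_{n-k}(X,F),F).$$
The Witt hypothesis enters only to replace $\overline{m}$ by $\underline{m}$ on the right-hand side: since $\underline{m}\le\overline{m}$ pointwise, the inclusion of allowable chain complexes induces a natural morphism $I^{\underline{m}}H_{*}(X,F)\to I^{\overline{m}}H_{*}(X,F)$, and the content of Siegel's theorem is that this morphism is an isomorphism whenever $X$ is Witt (see \cite{Bana}, \cite{Boreal} and \cite{KiWoo}). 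Substituting $I^{\overline{m}}H_{n-k}(X,F)\cong I^{\underline{m}}H_{n-k}(X,F)$ gives the desired self-duality.

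For the second item the essential input is the standard fact that on a normal pseudomanifold the inclusion of the top-perversity allowable chain complex into the full singular chain complex is a quasi-isomorphism, so that $I^{t}H_{k}(X,\mathbb{R})\cong H_{k}(X,\mathbb{R})$ for every $k$ (see \cite{GoMac2}, \cite{Bana} and \cite{KiWoo}); normality, i.e. connectedness of every link, is precisely what forces the top-perversity cone formula to agree with the ordinary homology of a cone at each singular point (were a link disconnected, $I^{t}H_{0}$ of the corresponding local cone would already be too large). Granting this, the remaining isomorphism is formal: since $0+t=t$, Theorem~\ref{duality} with $(p,q)=(0,t)$ gives $I^{0}H_{k}(X,\mathbb{R})\cong\mathrm{Hom}(I^{t}H_{m-k}(X,\mathbb{R}),\mathbb{R})$, and inserting $I^{t}H_{m-k}(X,\mathbb{R})\cong H_{m-k}(X,\mathbb{R})$ and invoking the universal coefficient theorem over the field $\mathbb{R}$ (the homology of the compact space $X$ being finitely generated) yields $I^{0}H_{k}(X,\mathbb{R})\cong\mathrm{Hom}(H_{m-k}(X,\mathbb{R}),\mathbb{R})\cong H^{m-k}(X,\mathbb{R})$.

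Since the statement only assembles results that are well documented in the literature, the work is almost entirely bookkeeping, and the one place deserving genuine care is the compatibility of coefficient conventions. Siegel's Witt condition is most naturally phrased over $\mathbb{Q}$ (or over the prime field of $F$), whereas Definition~\ref{Witt-Normal} uses $\mathbb{R}$ and Theorem~\ref{duality} permits an arbitrary field $F$; to run the first item for a general $F$ one should either restrict to fields of characteristic zero --- which is all that is needed in the third section, where $F=\mathbb{R}$ throughout --- or phrase the Witt hypothesis with $F$-coefficients. One should likewise quote the precise form of the normal comparison theorem $I^{t}H_{*}\cong H_{*}$ and read the orientation hypothesis in the second item with $F=\mathbb{R}$. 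Apart from these points, the identities $\underline{m}+\overline{m}=t$, $0+t=t$ and the universal coefficient step are routine.
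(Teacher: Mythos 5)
Your proposal is correct. The paper itself offers no argument for this theorem: it simply cites \cite{GoMac2}, \cite{Bana}, \cite{KiWoo} for the Witt self-duality and \cite[pag. 153]{GoMac} for the normal-space statements, so what you have done is reconstruct the standard proofs behind those citations. Your route for the first item (the identity $\underline{m}+\overline{m}=t$, Theorem \ref{duality} with the pair $(\underline{m},\overline{m})$, and Siegel's theorem that the Witt condition makes the canonical map $I^{\underline{m}}H_*\to I^{\overline{m}}H_*$ an isomorphism) is exactly the argument in those references. For the second item the cited source of the paper states both isomorphisms $I^{t}H_k(X,\mathbb{R})\cong H_k(X,\mathbb{R})$ and $I^{0}H_k(X,\mathbb{R})\cong H^{m-k}(X,\mathbb{R})$ directly for normal pseudomanifolds, whereas you take the first as the basic input and deduce the second from Theorem \ref{duality} with $(0,t)$ plus the universal coefficient theorem; this is a legitimate and slightly more self-contained variant (and over a field the finite-generation caveat you mention is not even needed, since the Ext term vanishes automatically). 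Your remark on coefficients is well taken: Definition \ref{Witt-Normal} phrases the Witt condition with $\mathbb{R}$-coefficients while the first bullet allows an arbitrary field $F$, so strictly one should either state the Witt hypothesis with $F$-coefficients or restrict to characteristic zero; since the paper only ever applies the theorem with $F=\mathbb{R}$, this does not affect anything downstream.
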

\begin{proof}
For the first property see \cite{GoMac2}, \cite{Bana}, \cite{KiWoo}. For the second one we refer to \cite[pag. 153]{GoMac}. 
\end{proof}

Finally we come to the last result of this section. Let $$d+d^t:\Omega_c^{\bullet}(\reg(X))\rightarrow \Omega_c^{\bullet}(\reg(X))$$ be the Hodge-de Rham operator with $d=\bigoplus_{k=0}^m d_k$, $d^t=\bigoplus_{k=0}^m d^t_k$ and $\Omega_c^{\bullet}(\reg(X))=\bigoplus_{k=0}^m \Omega^k_c(\reg(X))$:
\begin{teo}
Let $X$ be a compact, smoothly, Thom-Mather-Witt stratified pseudomanifold. Then there exist  iterated conic metrics $g$ on $\reg(X)$
such that $$d+d^t:L^2\Omega^{\bullet}(\reg(X),g)\rightarrow L^2\Omega^{\bullet}(\reg(X),g)$$ is essentially self-adjoint.
\end{teo}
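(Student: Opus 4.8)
The plan is to argue by induction on $\depth(X)$, the base case $\depth(X)=0$ being the classical essential self-adjointness of $d+d^t$ on a closed Riemannian manifold. Recall that $D:=d+d^t$ is essentially self-adjoint on $L^2\Omega^\bullet(\reg(X),g)$ if and only if $D_{\min,2}=D_{\max,2}$, i.e.\ every $u\in\mathcal D(D_{\max,2})$ is a limit, in the graph norm, of forms in $\Omega_c^\bullet(\reg(X))$. First I would fix a tubular neighbourhood $T$ of $\sing(X)$ in $X$ and a cut-off $\chi\in C^\infty(\reg(X))$ with $0\le\chi\le 1$, $\chi\equiv 1$ near $\sing(X)$ and $\supp\chi\subset T\cap\reg(X)$. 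Writing $u=(1-\chi)u+\chi u$ for $u\in\mathcal D(D_{\max,2})$, the form $(1-\chi)u$ has compact support in $\reg(X)$, hence lies in $\mathcal D(D_{\min,2})$ by Friedrichs mollification, so it is enough to show $\chi u\in\mathcal D(D_{\min,2})$. By a partition of unity subordinate to distinguished charts of the strata of maximal depth, this reduces to an essential self-adjointness statement for the model operator on a single neighbourhood $\pi_Y^{-1}(U)\cap\reg(X)$.

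\textbf{The model on a cone bundle.} By the inductive hypothesis applied to the link $L_Y$, which is a compact Witt pseudomanifold of strictly smaller depth, we may choose an iterated conic metric $g_{L_Y}$ on $\reg(L_Y)$ for which $d+d^t$ is essentially self-adjoint; by Definition~\ref{iter} I would then \emph{choose} the iterated conic metric $g$ on $\reg(X)$ so that on $\pi_Y^{-1}(U)\cap\reg(X)\cong U\times(0,\varepsilon)_r\times\reg(L_Y)$ it is \emph{exactly} the warped product $h_U+dr^2+r^2g_{L_Y}$. This rigidity is crucial: in contrast with the $L^p$-Stokes property (which only involves $d$ and is a quasi-isometry invariant), essential self-adjointness of $d+d^t$ involves $d^t$ and hence the Hodge star, and is \emph{not} preserved under quasi-isometries. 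On such an exact warped product the Hodge--de Rham operator is, modulo terms of lower order in $1/r$, a suspension of the Hodge--de Rham operator of $(\reg(L_Y),g_{L_Y})$ coupled to the de Rham operator in the $U$-direction; freezing $y\in U$, Cheeger's separation-of-variables / Mellin analysis on the metric cone over $\reg(L_Y)$ shows that $d+d^t$ is essentially self-adjoint there if and only if $\mathcal H^{f/2}_2(\reg(L_Y),g_{L_Y})=\{0\}$ whenever $f:=\dim L_Y=\cod(Y)-1$ is even, and is automatically essentially self-adjoint when $f$ is odd.

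\textbf{Closing the induction.} When $\cod(Y)$ is even there is nothing to prove. When $\cod(Y)=2l_Y+1$ is odd, $f=2l_Y$; since $d+d^t$ is essentially self-adjoint on the link there is a single $L^2$-Hodge cohomology there, and by Theorem~\ref{LpInt} with $r=2$ applied to $L_Y$ one gets $\mathcal H^{l_Y}_2(\reg(L_Y),g_{L_Y})\cong I^{\underline m}H^{l_Y}(L_Y,\mathbb R)$, which vanishes precisely because $X$ is Witt. Thus the frozen model operator is essentially self-adjoint in all cases; it then remains to restore the $y$-dependence, i.e.\ to show that a family of cone operators which is essentially self-adjoint fibrewise, with indicial estimates uniform in $y\in U$, produces an essentially self-adjoint operator on the total space $U\times(0,\varepsilon)\times\reg(L_Y)$. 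I would obtain this by commuting a cut-off in $r$ with the fibrewise-built approximating sequences and exploiting the uniformity in $y$. Combined with the reduction of the first paragraph this yields $D_{\min,2}=D_{\max,2}$ on $\reg(X)$, closing the induction.

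\textbf{Main obstacle.} The hard part will be the last step: passing from the fibrewise cone analysis to the full cone bundle over $Y$, where the singular $r$-direction (carrying the indicial roots and the Witt obstruction) must be controlled simultaneously and uniformly with the tangential directions along $Y$; this is cleanest in the framework of the edge calculus, or via a Mellin transform in $r$ with values in operators on $U\times\reg(L_Y)$. Subsidiary points that also need care are checking that the iterated conic metric can be taken rigid near each stratum while still satisfying Definition~\ref{iter}, and that the inductive use of the $L^2$-Hodge--intersection cohomology isomorphism on the link is legitimate — which it is, since it only requires essential self-adjointness on the link, available by the inductive hypothesis.
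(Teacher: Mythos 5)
The paper itself does not prove this statement: it quotes it from Albin--Leichtnam--Mazzeo--Piazza (the citation [ALMP, Th. 6.1, Prop. 5.4]), and your overall strategy (induction on depth, reduction to a cone model near each stratum, the Witt condition entering through the middle-degree cohomology of the links) is indeed the strategy of that reference. However, there is a genuine gap at the key step, independent of the fibrewise-to-family difficulty you flag. The characterization you invoke on the exact cone $dr^2+r^2g_{L_Y}$ --- essential self-adjointness iff $\mathcal H^{f/2}_2(\reg(L_Y),g_{L_Y})=\{0\}$ for $f$ even, and automatic for $f$ odd --- is false. Writing $d+d^t$ near the tip as $\partial_r+\frac1r A$, essential self-adjointness is equivalent to $\spec(A)\cap(-\frac12,\frac12)=\emptyset$. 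Harmonic middle-degree forms on the link contribute the eigenvalue $0$ (this is the cohomological Witt obstruction), but coexact eigenforms of the link in the degrees adjacent to the middle with small nonzero eigenvalue $\mu$ contribute eigenvalues of the form $\sqrt{\mu+\nu^2}-\frac12$ with $\nu=0$, which lie in the critical interval as soon as $\mu<1$. Concretely, the flat cone over a circle of circumference $L>4\pi$ has odd-dimensional (hence Witt) link, yet $d+d^t$ with core $\Omega^{\bullet}_c$ is not essentially self-adjoint there. Consequently the hypothesis you propagate in the induction (essential self-adjointness on the link) is not the right one: what must be arranged on each link is a spectral gap for the induced cone operator (the ``geometric Witt condition'' of the cited paper), and the cohomological Witt assumption yields this only after suitably scaling the conic metrics on the links. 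This scaling step is precisely the content of the cited Prop. 5.4 and is the reason the theorem asserts the \emph{existence} of suitable iterated conic metrics rather than a statement valid for all of them; your construction fixes the metric to be an exact warped product but never scales the link metrics, so the assertion that ``the frozen model operator is essentially self-adjoint in all cases'' is unjustified and in general wrong.

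Beyond this, the step you yourself identify as the main obstacle --- passing from the frozen, fibrewise cone analysis to the cone bundle over $Y$ uniformly in $y$, and then iterating over strata of higher depth --- is the technical heart of the cited proof, where it is carried out with the edge pseudodifferential calculus; ``commuting a cut-off in $r$ with the fibrewise approximating sequences'' is not an argument at that level of generality. So even after correcting the model statement by the spectral-gap/scaling argument, your sketch would still need that machinery (or an equivalent substitute) to close the induction.
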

\begin{proof}
See \cite[Th. 6.1, Prop. 5.4]{ALMP}. 
\end{proof}

\subsection{Applications to Thom-Mather pseudomanifolds}
From now on we will always assume that $\reg(X)$ is connected. Roughly speaking the main goal of this subsection is to show how in many cases  the existence of an iterated conic metric on $\reg(X)$ with the negative part of $L_k$, the linear term appearing in the Weitzenb\"ock formula for the k-th Hodge Laplacian $\Delta_k$, in the Kato class of $\reg(X)$ yields already strong topological constraints. Since they will be used frequently, we recall here two important properties enjoyed by compact  smoothly  Thom-Mather stratified pseudomanifolds of dimension $m$ whose regular part is endowed with an iterated conic metric:
\begin{itemize}
\item $(\reg(X), g)$ is parabolic;
\item If $m>2$ then $f\in W^{1,2}_0(\reg(X),g)$ implies $f\in L^{\frac{2m}{m-2}}(\reg(X),g)$ and the corresponding inclusion $W^{1,2}_0(\reg(X),g)\hookrightarrow L^{\frac{2m}{m-2}}(\reg(X),g)$ is continuous.
\end{itemize}
For the former property we refer to \cite[Th. 3.4]{BeGu} while the latter is proved in \cite[Prop. 2.2]{ACM}.
 We are now in position for the following

\begin{prop}
Let $X$ be a compact and oriented smoothly Thom-Mather stratified pseudomanifold of dimension $m$ such that $\ell_k^-\in \mathcal{K}(\reg(X))$ for some  $k\in \{0,...,m\}$. Then $$\dim(\ker(\Delta_k^{\mathcal{F}}))\leq \dim(I^{q_r}H^k(X,\mathbb{R}))$$ for each $r\in [2,\infty)$. If $X$ is also normal then $$\dim(\ker(\Delta_k^{\mathcal{F}}))\leq \dim(H^k(X,\mathbb{R})).$$
\end{prop}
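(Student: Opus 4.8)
The plan is to realize $\ker(\Delta_k^{\mathcal{F}})$ as a subspace of $H^k_{r,\max}(\reg(X),g)$ and then invoke the identification of the latter with an intersection cohomology group from Th.~\ref{LpInt}; note that, unlike in Th.~\ref{09/07x}, essential self-adjointness of $d+d^t$ is not available here, so one must argue directly with the Friedrichs Laplacian. First I would record the two standing properties of compact Thom--Mather pseudomanifolds recalled at the start of this subsection, namely $\vol_g(\reg(X))<\infty$ and, since $m>2$, the continuous Sobolev embedding $W^{1,2}_0(\reg(X),g)\hookrightarrow L^{\frac{2m}{m-2}}(\reg(X),g)$ of \cite[Prop.~2.2]{ACM}. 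Combined with the hypothesis $\ell_k^-\in\mathcal{K}(\reg(X))$, Prop.~\ref{tozzi} then yields, for every $r\in[2,\infty)$, a continuous inclusion $\ker(\Delta_k^{\mathcal{F}})\hookrightarrow L^r\Omega^k(\reg(X),g)$.

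Next I would build the comparison map. Put $D_k:=d^t_{k-1}+d_k$, so that $\Delta_k=D_k^t\circ D_k$ on $\Omega_c^k(\reg(X))$ and, by the description \eqref{Fred} of the Friedrichs extension, $\ker(\Delta_k^{\mathcal{F}})=\ker(D_{k,\min})$. Approximating $\omega\in\ker(D_{k,\min})$ in the graph norm by compactly supported smooth forms and using that the target spaces $L^2\Omega^{k-1}(\reg(X),g)$ and $L^2\Omega^{k+1}(\reg(X),g)$ are orthogonal, one finds $\omega\in\mathcal{D}(d_{k,\min,2})\cap\mathcal{D}(d^t_{k-1,\min,2})$ with $d_{k,\min,2}\omega=0$ and $d^t_{k-1,\min,2}\omega=0$. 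Since such an $\omega$ is smooth (elliptic regularity), $d_k\omega=0$ pointwise; together with $\omega\in L^r\Omega^k(\reg(X),g)$ this forces $\omega\in\mathcal{D}(d_{k,\max,r})$ with $d_{k,\max,r}\omega=0$, so $\omega$ determines a class $[\omega]_r\in H^k_{r,\max}(\reg(X),g)$. The assignment $\omega\mapsto[\omega]_r$ is linear and injective: if $[\omega]_r=0$, then $\omega=d_{k-1,\max,r}\phi$ for some $\phi\in L^r\Omega^{k-1}(\reg(X),g)$; by $\vol_g(\reg(X))<\infty$ and \eqref{corno} this $\phi$ lies in $\mathcal{D}(d_{k-1,\max,2})$ with $d_{k-1,\max,2}\phi=\omega$, whence, using $(d_{k-1,\max,2})^*=d^t_{k-1,\min,2}$ from \eqref{Badjoint} together with $d^t_{k-1,\min,2}\omega=0$, one gets $\langle\omega,\omega\rangle=\langle d_{k-1,\max,2}\phi,\omega\rangle=\langle\phi,d^t_{k-1,\min,2}\omega\rangle=0$, i.e. $\omega=0$. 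Here $\langle\cdot,\cdot\rangle$ denotes the $L^2\Omega^{\bullet}(\reg(X),g)$ inner product.

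It then remains to translate the resulting bound $\dim\ker(\Delta_k^{\mathcal{F}})\le\dim H^k_{r,\max}(\reg(X),g)$. By Th.~\ref{LpInt} we have $\dim H^k_{r,\max}(\reg(X),g)=\dim I^{q_r}H^k(X,\mathbb{R})$, which is the first assertion for every $r\in[2,\infty)$. If $X$ is normal, I would apply this with any $r\ge\ell:=\max\{\mathrm{cod}(Y):Y\in\mathfrak{G},\ \depth(Y)>0\}$, for which $q_r=t$ as recalled before Th.~\ref{LpInt}; then $I^tH^k(X,\mathbb{R})\cong\mathrm{Hom}(I^tH_k(X,\mathbb{R}),\mathbb{R})\cong\mathrm{Hom}(H_k(X,\mathbb{R}),\mathbb{R})$ by Th.~\ref{mischiozzo}, and since $X$ is compact the latter space has dimension $\dim H^k(X,\mathbb{R})$, so $\dim\ker(\Delta_k^{\mathcal{F}})\le\dim H^k(X,\mathbb{R})$.

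I expect the only genuinely delicate point to be the injectivity step: one has to justify carefully the identity $\ker(\Delta_k^{\mathcal{F}})=\ker(D_{k,\min})$ and the separate vanishing of $d_{k,\min,2}\omega$ and of $d^t_{k-1,\min,2}\omega$, since it is precisely $d^t_{k-1,\min,2}\omega=0$ that allows a putatively exact harmonic form to be paired against itself and seen to vanish. One should also keep in mind the tacit use of $m>2$, which enters through the Sobolev embedding feeding Prop.~\ref{tozzi}.
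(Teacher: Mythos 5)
Your proof is correct, and its architecture is the same as the paper's: use Prop.~\ref{tozzi} (finite volume, Sobolev embedding from \cite[Prop. 2.2]{ACM}, $\ell_k^-\in\mathcal{K}(\reg(X))$) to place harmonic forms in $L^r\Omega^k(\reg(X),g)$, map $\ker(\Delta_k^{\mathcal F})$ into $H^k_{r,\max}(\reg(X),g)$, and conclude via Th.~\ref{LpInt}, with $q_r=t$ for $r$ large and Th.~\ref{mischiozzo} in the normal case. The one step where you genuinely diverge is injectivity. The paper first invokes \cite[Cor. 18]{FB} to get that $\Delta_k^{\mathcal F}$ is Fredholm, so that $\im(\Delta_k^{\mathcal F})=\im((d_{k-1}+d^t_k)_{2,\max})$ is closed and a nonzero harmonic form cannot lie in $\im(d_{k-1,\max,2})$, and then transfers this to $L^r$ by the finite-volume argument of Th.~\ref{tozziti}; you instead extract from $\omega\in\ker((d_k+d^t_{k-1})_{2,\min})$ (via \eqref{Fred} and orthogonality of the two degree components) the separate identity $d^t_{k-1,\min,2}\omega=0$, pull a putative $L^r$-primitive down to an $L^2$-primitive by \eqref{corno}, and kill $\omega$ by pairing through $(d_{k-1,\max,2})^*=d^t_{k-1,\min,2}$ from \eqref{Badjoint}. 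Your variant is more self-contained: it needs no Fredholm or closed-range input, and since it actually shows $\omega\perp\overline{\im(d_{k-1,\max,2})}$ it would give injectivity even into reduced cohomology; the finiteness of $\ker(\Delta_k^{\mathcal F})$ that the paper gets from Fredholmness is not needed for the stated inequality because $I^{q_r}H^k(X,\mathbb{R})$ is finite dimensional by compactness of $X$ (Cor.~\ref{closed}). Your flag about the tacit use of $m>2$ (needed for the Sobolev embedding feeding Prop.~\ref{tozzi}) matches a hypothesis the paper also uses implicitly here, so it is not a gap in your argument relative to the paper's.
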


\begin{proof}
According to \cite[Cor. 18]{FB} we know that $\Delta_k^{\mathcal{F}}:L^2\Omega^k(\reg(X),g)\rightarrow L^2\Omega^k(\reg(X),g)$ is a Fredholm operator on its domain endowed with the graph norm. In particular $\dim(\ker(\Delta_k^{\mathcal{F}}))<\infty$ and $\im(\Delta_k^{\mathcal{F}})=\im((d_{k-1}+d^t_k)_{2,\max})$ is closed in $L^2\Omega^k(\reg(X),g)$.  Now consider any $ \omega\in \ker(\Delta_k^{\mathcal{F}})$. Using \eqref{Fred} we know that $\omega\in \ker((d_{k}+d^t_{k-1})_{2,\min})$ and so $\omega\in \ker(d_{k,\min,2})$. Since $\Delta_k$ is an elliptic operator we know that $\omega$ is smooth and by Prop. \ref{tozzi} we know that $\omega \in L^r\Omega^k(\reg(X),g)$ for any $2\leq r\leq \infty$. So we can deduce that $\omega\in \ker(d_{k,\max,r})$ for any $2\leq r\leq \infty$. On the other hand by the fact that $\omega\in \ker(\Delta_k^{\mathcal{F}})$ we know that if $\omega\in \im((d_{k-1}+d^t_k)_{2,\max})$ then $\omega=0$. In particular if $\omega\in \im(d_{k-1,2,\max})$ then $\omega=0$, in that $\im(d_{k-1,2,\max})\subset \im((d_{k-1}+d^t_k)_{2,\max})$. Using the fact that $\vol_g(\reg(X))<\infty$ and arguing as in the proof of Th. \ref{tozziti} we get that if $\omega\in \im(d_{k-1,r,\max})$ then $\omega=0$, for each $2\leq r\leq \infty$. This leads us to conclude that if $\omega\neq 0$ then $\omega$ induces a non trivial class in  $H^k_{r,\max}(\reg(X),g)$ for any $2\leq r\leq \infty$ and the corresponding map $\ker(\Delta_k^{\mathcal{F}})\rightarrow H^k_{r,\max}(\reg(X),g)$ is  injective. Now the first inequality follows by Th. \ref{LpInt} while the second one follows by Th. \ref{mischiozzo} and the fact that $q_r=t$ for $r$ sufficiently big.
\end{proof}

Some of the most interesting applications concern compact smoothly Thom-Mather-Witt stratified pseudomanifolds.

\begin{teo}
\label{09/07}
Let $X$ be a compact and oriented smoothly  Thom-Mather-Witt stratified pseudomanifolds of dimension $m>2$. Let $g$ be  an iterated conic metric on $\reg(X)$ such that $d+d^t:L^2\Omega^{\bullet}(\reg(X),g)\rightarrow L^2\Omega^{\bullet}(\reg(X),g)$ is essentially self-adjoint.  We have the following properties:
\begin{enumerate}
\item If $\ell_k^-\in \mathcal{K}(\reg(X))$ for some $k\in \{0,...,m\}$ then Th. \ref{tozziti} holds true for $(\reg(X),g)$ and $k$. As a consequence $$\dim(I^{\underline{m}}H^k(X,\mathbb{R}))\leq \dim(I^{q_r}H^k(X,\mathbb{R}))$$ for any $2\leq r<\infty$. If $X$ is also normal then $$\dim(I^{\underline{m}}H^k(X,\mathbb{R}))\leq \dim(H^k(X,\mathbb{R})).$$
\item Let $2\leq r< \infty$ be arbitrarily fixed. If  $\ell_{k-1}^-,\ell_k^-\in \mathcal{K}(\reg(X))$ for some $k\in \{0,...,m\}$ then the map $$\gamma:H^k_{r,\max}(\reg(X),g)\rightarrow H^k_{2}(\reg(X),g)$$ is injective, see Th. \ref{LpL2}. Consequently $$\dim(I^{\underline{m}}H^k(X,\mathbb{R}))= \dim(I^{q_r}H^k(X,\mathbb{R}))$$ for any $2\leq r<\infty$. If $X$ is also normal then $$\dim(I^{\underline{m}}H^k(X,\mathbb{R}))= \dim(H^k(X,\mathbb{R})).$$
\item Assume that there exists $r\in (2,\infty)$ such that each singular stratum $Y\subset X$ satisfies $\mathrm{cod}(Y)\geq r$ if $\mathrm{depth}(Y)=1$ whereas $\mathrm{cod}(Y)> r$ if $\mathrm{depth}(Y)>1$. If $\ell_k^-,\ell_{k+1}^-\in \mathcal{K}(\reg(X))$ for some $k\in \{0,...,m\}$ then the $L^z$-Stokes theorem holds true on $L^z\Omega^k(M,g)$ for any $z\in [2,r]$. 
\end{enumerate}
\end{teo}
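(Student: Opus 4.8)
The plan is to check that, in every degree that comes up, $(\reg(X),g)$ satisfies \emph{all} the standing hypotheses of Theorems \ref{tozziti}, \ref{LpL2} and \ref{LpStokes}, so that each of the three assertions reduces to the corresponding general theorem read through the $L^r$-cohomology/intersection-cohomology dictionary of Theorem \ref{LpInt} (together with Corollary \ref{closed}) and, where needed, the duality statements of Theorems \ref{duality} and \ref{mischiozzo}. If $\sing(X)=\emptyset$ then $X$ is a compact smooth manifold and the three statements are classical, so we may assume $X$ is genuinely singular; in particular $(\reg(X),g)$ is an incomplete Riemannian manifold.

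First I would assemble the structural input. As $X$ is compact, $\vol_g(\reg(X))<\infty$; as $m>2$, the continuous Sobolev embedding $W^{1,2}_0(\reg(X),g)\hookrightarrow L^{\frac{2m}{m-2}}(\reg(X),g)$ holds for an iterated conic metric (\cite{ACM}). The essential self-adjointness of $d+d^t$ implies, by standard facts about the induced closed extensions (cf. \cite{ALMP}, \cite{BL}), that $D_{j,\min}=D_{j,\max}$ and hence, via Proposition \ref{oneto}, that $\Delta_{j,\mathrm{abs}}=\Delta_j^{\mathcal F}$ for all $j\in\{0,\dots,m\}$; in particular $d_{j,\max,2}=d_{j,\min,2}$ for all $j$, so there is a single $L^2$-cohomology group $H^j_2(\reg(X),g)$ in each degree. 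Finally, Corollary \ref{closed} guarantees that for every $j$ and every $2\leq r<\infty$ the image $\im(d_{j,\max,r})$ is closed and $\overline{H}^j_{r,\max}(\reg(X),g)=H^j_{r,\max}(\reg(X),g)$ is finite dimensional, while Theorem \ref{LpInt} identifies $H^j_{r,\max}(\reg(X),g)\cong I^{q_r}H^j(X,\mathbb R)$; since $q_2=\underline m$, this reads $H^j_2(\reg(X),g)\cong I^{\underline m}H^j(X,\mathbb R)$.

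For (1), the items just listed---Sobolev embedding, $\ell_k^-\in\mathcal K(\reg(X))$, finite volume, incompleteness, $\Delta_{k,\mathrm{abs}}=\Delta_k^{\mathcal F}$---are precisely the hypotheses of Theorem \ref{tozziti} for the pair $(\reg(X),g)$ and the given $k$, which therefore applies and produces the injective maps $\beta_r:\overline{H}^k_{2,\max}(\reg(X),g)\to\overline{H}^k_{r,\max}(\reg(X),g)$ for $2\leq r<\infty$. Rewriting the reduced groups as the unreduced finite-dimensional ones via Corollary \ref{closed} and translating through Theorem \ref{LpInt} yields $\dim I^{\underline m}H^k(X,\mathbb R)\leq\dim I^{q_r}H^k(X,\mathbb R)$. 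Choosing $r\geq\max\{\cod(Y):Y\ \text{a singular stratum}\}$ makes $q_r=t$, so when $X$ is normal Theorem \ref{mischiozzo} gives $\dim I^{t}H^k(X,\mathbb R)=\dim H^k(X,\mathbb R)$ and the second inequality follows. For (2), adjoining $\ell_{k-1}^-\in\mathcal K(\reg(X))$ and using the closedness of $\im(d_{k-1,\max,r})$ and $\im(d_{k-1,\max,2})$ (Corollary \ref{closed}) together with $\Delta_{k-1,\mathrm{abs}}=\Delta_{k-1}^{\mathcal F}$ places us inside the hypotheses of Theorem \ref{LpL2} and of Corollary \ref{IsoL2Lq}; hence $\gamma:H^k_{r,\max}(\reg(X),g)\to H^k_2(\reg(X),g)$ is an isomorphism for each $2\leq r<\infty$, and the asserted equalities of dimensions follow exactly as in (1).

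For (3) the key---and only genuinely non-bookkeeping---step is that the hypothesis on the codimensions of the singular strata is exactly what upgrades parabolicity to $r$-parabolicity of $(\reg(X),g)$: the regular part of a $d$-dimensional cone is $z$-parabolic for $z\leq d$, with the iteration forcing a strict inequality at strata of depth greater than one (see \cite{BeGu}, and \cite{GYA} for $q$-parabolicity in general). Granting this, $(\reg(X),g)$ is $r$-parabolic with $2<r<\infty$, and jointly with finite volume, the Sobolev embedding, $\ell_k^-,\ell_{k+1}^-\in\mathcal K(\reg(X))$, and $\Delta_{k,\mathrm{abs}}=\Delta_k^{\mathcal F}$, $\Delta_{k+1,\mathrm{abs}}=\Delta_{k+1}^{\mathcal F}$ these are precisely the hypotheses of Theorem \ref{LpStokes} with $q=r$; therefore the $L^z$-Stokes theorem holds on $L^z\Omega^k(\reg(X),g)$ for every $z\in[2,r]$. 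The main difficulty lies not in any single deduction but in certifying that each of the three general theorems of Section 2 is applicable in the stratified setting; the one point needing a separate (standard) argument rather than the invocation of an already-quoted fact is the passage from the codimension condition in (3) to $r$-parabolicity.
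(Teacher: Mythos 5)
Your proposal is correct and follows essentially the same route as the paper: essential self-adjointness of $d+d^t$ plus Prop. \ref{oneto} gives $\Delta_{j,\mathrm{abs}}=\Delta_j^{\mathcal F}$ in all degrees, Cor. \ref{closed} and Th. \ref{LpInt} supply the finiteness, closed-range and intersection-cohomology identifications, Th. \ref{mischiozzo} handles the normal case with $r$ large so that $q_r=t$, and the codimension hypothesis in (3) yields $r$-parabolicity via \cite{BeGu} before invoking Th. \ref{LpStokes}. The only cosmetic difference is that in (2) you upgrade $\gamma$ to an isomorphism via Cor. \ref{IsoL2Lq}, whereas the paper deduces the dimension equalities from injectivity of $\gamma$ combined with part (1); both are valid.
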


\begin{proof}
Since $d+d^t:L^2\Omega^{\bullet}(\reg(X),g)\rightarrow L^2\Omega^{\bullet}(\reg(X),g)$ is essentially self-adjoint we know that for each $j=0,...,m$, $\Delta_{j,\mathrm{abs}}:L^2\Omega^j(\reg(X),g)\rightarrow L^2\Omega^j(\reg(X),g)$  equals $\Delta_{j}^{\mathcal{F}}:L^2\Omega^j(\reg(X),g)\rightarrow L^2\Omega^j(\reg(X),g)$, see Prop. \ref{oneto}. Moreover Cor. \ref{closed} tells us that $\dim(H^j_{r,\max}(\reg(X),g))<\infty$ and  $\im(d_{j,\max,r})$ is closed in $L^r\Omega^{j+1}(\reg(X),g)$ for each $j=0,...,m$ and $2\leq r<\infty$. Now it is clear that Th. \ref{tozziti} holds true for $(\reg(X),g)$ and $k$. Consequently the inequality  $\dim(I^{\underline{m}}H^k(X,\mathbb{R}))\leq \dim(I^{q_r}H^k(X,\mathbb{R}))$ for any $2\leq r<\infty$ follows immediately by Th. \ref{tozziti} and Th. \ref{LpInt}. Finally if $X$ is also normal, by taking $r$ sufficiently big and using Th. \ref{mischiozzo}, we have  $\dim(I^{\underline{m}}H^k(X,\mathbb{R}))\leq \dim(I^{q_r}H^k(X,\mathbb{R}))$ $= \dim(I^{t}H^k(X,\mathbb{R}))$ $= \dim(H^k(X,\mathbb{R}))$. Concerning the second point we are in position to apply Th. \ref{LpL2} as $(\reg(X),g)$ satisfies all the corresponding assumptions. Thus $\gamma:H^k_{r,\max}(\reg(X),g)\rightarrow H^k_{2}(\reg(X),g)$ is injective. This, combined with the first point and Th. \ref{LpInt}, implies immediately that $\dim(I^{\underline{m}}H^k(X,\mathbb{R}))=$ $ \dim(I^{q_r}H^k(X,\mathbb{R}))$ for any $2\leq r<\infty$. In particular when $X$ is also normal, by taking $r$ sufficiently big, we  have  $\dim(I^{\underline{m}}H^k(X,\mathbb{R}))= \dim(H^k(X,\mathbb{R})).$ Finally by the assumptions made in the third point we know that $(\reg(X),g)$ is $r$-parabolic, see \cite[Th. 3.4]{BeGu}. Now the conclusion follows by Th. \ref{LpStokes}.
\end{proof}

\noindent We observe that it is already well known that the $L^2$-Stokes theorem holds true on any compact smoothly Witt-Thom-Mather stratified pseudomanifold without any assumption on the curvature. See \cite{ALMP} and \cite{JC}. The novelty of the third point of the above theorem lies in the cases $(2,r]$. Now we point out that when $k=1$ the first two points of the above theorem simplify considerably:

\begin{cor}
\label{ricci}
In the setting of Th. \ref{09/07}. Let $2\leq r<\infty$ be arbitrarily fixed. If $\mathrm{ric}^-\in \mathcal{K}(\reg(X))$ then Th. \ref{tozziti} and Th. \ref{LpL2} hold true for $(\reg(X),g)$ and $k=1$. Consequently 
the map $$\gamma:H^1_{r,\max}(\reg(X),g)\rightarrow H^1_{2}(\reg(X),g)$$ is an isomorphism and  $$\dim(I^{\underline{m}}H^1(X,\mathbb{R}))= \dim(I^{q_r}H^1(X,\mathbb{R}))$$ for any $2\leq r<\infty$. If $X$ is also normal then $$\dim(I^{\underline{m}}H^1(X,\mathbb{R}))= \dim(H^1(X,\mathbb{R})).$$
\end{cor}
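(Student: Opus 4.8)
The plan is to check that for the distinguished degree $k=1$ the single hypothesis $\mathrm{ric}^-\in\mathcal{K}(\reg(X))$ already supplies every curvature assumption needed by Th.~\ref{tozziti} and Th.~\ref{LpL2}, so that the statement becomes the case $k=1$ of Th.~\ref{09/07}, parts (1)--(2), sharpened to an isomorphism via Cor.~\ref{IsoL2Lq}.

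First I would dispose of the degree-$0$ Weitzenb\"ock term. On functions the Hodge Laplacian coincides with the Bochner Laplacian $\nabla^t\nabla$, so $L_0\equiv 0$, hence $\ell_0^-\equiv 0\in L^\infty(\reg(X))\subset\mathcal{K}(\reg(X))$ by \eqref{bKato}. In degree $1$ one has $L_1=\mathrm{Ric}$, so $\ell_1^-=\mathrm{ric}^-\in\mathcal{K}(\reg(X))$ by hypothesis. Thus the condition $\ell_{k-1}^-,\ell_k^-\in\mathcal{K}(\reg(X))$ required by Th.~\ref{LpL2} and the condition $\ell_k^-\in\mathcal{K}(\reg(X))$ required by Th.~\ref{tozziti} both hold for $k=1$.

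Next I would observe that the remaining hypotheses of Th.~\ref{tozziti} and Th.~\ref{LpL2} are precisely those already verified in the proof of Th.~\ref{09/07}: compactness of $X$ gives $\vol_g(\reg(X))<\infty$ and the continuous Sobolev inclusion $W^{1,2}_0(\reg(X),g)\hookrightarrow L^{\frac{2m}{m-2}}(\reg(X),g)$ (the two bulleted properties recalled just before Th.~\ref{09/07}); $(\reg(X),g)$ is incomplete since $g$ is an iterated conic metric; essential self-adjointness of $d+d^t$ together with Prop.~\ref{oneto} yields $\Delta_{0,\mathrm{abs}}=\Delta_0^{\mathcal{F}}$ and $\Delta_{1,\mathrm{abs}}=\Delta_1^{\mathcal{F}}$ (hence a single well-defined group $H^1_2(\reg(X),g)$, by the remark following Th.~\ref{LpL2}); and Cor.~\ref{closed} gives that $\im(d_{0,\max,r})$ is closed in $L^r\Omega^1(\reg(X),g)$ for every $r\in[2,\infty)$, in particular for the fixed exponent and for $r=2$. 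Consequently Th.~\ref{tozziti} applies to $(\reg(X),g)$ with $k=1$, and Th.~\ref{LpL2} applies with $k=1$ and $q=r$, so $\gamma\colon H^1_{r,\max}(\reg(X),g)\to H^1_2(\reg(X),g)$ is injective; since $\im(d_{0,\max,2})$ is closed in $L^2\Omega^1(\reg(X),g)$, Cor.~\ref{IsoL2Lq} upgrades $\gamma$ to an isomorphism.

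Finally I would transport the isomorphism through Th.~\ref{LpInt}: as $q_2=\underline m$ it gives $H^1_2(\reg(X),g)\cong I^{\underline m}H^1(X,\mathbb{R})$ and $H^1_{r,\max}(\reg(X),g)\cong I^{q_r}H^1(X,\mathbb{R})$, hence $\dim I^{\underline m}H^1(X,\mathbb{R})=\dim I^{q_r}H^1(X,\mathbb{R})$, and since $r\in[2,\infty)$ was arbitrary this holds for all such $r$; when $X$ is normal, taking $r\geq\ell:=\max\{\cod(Y):Y\in\mathfrak{G},\ \depth(Y)>0\}$ gives $q_r=t$, and $I^tH^1(X,\mathbb{R})\cong\Hom(I^tH_1(X,\mathbb{R}),\mathbb{R})\cong\Hom(H_1(X,\mathbb{R}),\mathbb{R})\cong H^1(X,\mathbb{R})$ by Th.~\ref{mischiozzo}, which yields the last equality (equivalently, this is Th.~\ref{09/07}(2) specialized to $k=1$). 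The argument involves no genuinely hard step; the one substantive observation is the vanishing $L_0\equiv 0$, which collapses the two-degree Kato requirement of Th.~\ref{LpL2} to the single Ricci hypothesis, everything else being the bookkeeping already carried out for Th.~\ref{09/07}.
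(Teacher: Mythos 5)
Your proposal is correct and follows essentially the same route as the paper: the paper's proof simply cites Th.~\ref{09/07} with $k=1$ together with the Weitzenb\"ock formula, the implicit key point being exactly your observation that $L_0\equiv 0$ (so $\ell_0^-\equiv 0\in\mathcal{K}(\reg(X))$) and $L_1=\mathrm{Ric}$, which reduces the curvature hypotheses to $\mathrm{ric}^-\in\mathcal{K}(\reg(X))$. Your explicit upgrade of injectivity to an isomorphism via Cor.~\ref{IsoL2Lq} (using Cor.~\ref{closed} for $r=2$) is the same mechanism the paper relies on, just spelled out.
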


\begin{proof}
This follows by Th. \ref{09/07} in the case $k=1$ and the corresponding Weitzenb\"ock formula.
\end{proof}

Let us now describe a class of examples to which Cor. \ref{ricci} applies. Let $\overline{M}$ be a compact and oriented manifold with boundary $N:=\partial\overline{M}$ and interior $M:=\overline{M}\setminus \partial\overline{M}$. Let us assume what follows:
there exist compact manifolds $B$ and $F$ such that $N$ is the total space of a fiber bundle $p:N\rightarrow B$ with fibers $F$  and structure group $G$ induced by a principal $G$-bundle $\tilde{p}:P\rightarrow B$ endowed with an arbitrarily fixed principal connection $\theta$. Moreover we require that $F$ is either even dimensional or $H^{f/2}(F,\mathbb{R})=0$, with $f:=\dim F$, that $F$ carries a Riemannian metric $h$ such that $\mathrm{sec}_h\geq 1$ and that $G$ acts on $(F,h)$ by isometries. 
From now on we fix a $G$-atlas $\mathcal{A}_G$ on the bundle $(N,B,p,F)$. Let $T_VN$ be the vertical tangent bundle of $p:N\rightarrow B$. Note that for any $(U,\varphi_U)$, $(V,\varphi_V)\in \mathcal{A}_G$ with $U\cap V\neq \emptyset$ we have $(\varphi_U^*h)|_{p^{-1}(U\cap V)}=(\varphi_V^*h)|_{p^{-1}(U\cap V)}$. Let thus $\xi\in C^{\infty}(N,(T_VN)^*\otimes (T_VN)^*)$ be the metric on $T_VN$ defined by patching together the local metrics $\varphi_U^*h$. Let now $g_B$ be an arbitrary Riemannian metric on $B$ and let $W$ be the horizontal subbundle of $TN$ induced by $\theta$. In this way we have $TN\cong W\oplus T_VN$. Let $\rho$ be the Riemannian metric on $N$ given by $\rho:=p^*g_B+\sigma$ with $\sigma$ the section of $T^*N\otimes T^*N$ induced by $TN\cong W\oplus T_VN$ and $\xi$. As a next step let us consider the fiber bundle $\pi: (0,1)\times N \rightarrow B$ with $\pi(r,x)=p(x)$ for each $x\in N$ and trivializations $\{(U,\phi_U)\}$ with $\phi_U:\pi^{-1}(U)\rightarrow (0,1)\times U\times F$ given by $\phi_U:= (\mathrm{Id},\varphi_U)$ and $(U,\varphi)\in \mathcal{A}_G$. We endow $(0,1)\times N$ with the metric $\tau:=\pi^*g_B+dr^2+r^2\tilde{\sigma}$, with $\tilde{\sigma}=\frak{p}^*\sigma$ and $\frak{p}:(0,1)\times N\rightarrow N$ the projection on the first factor. Note that both $p:(N,\rho)\rightarrow (B,g_B)$ and $\pi:((0,1)\times N,\tau)\rightarrow (B,g_B)$ become Riemannian submersions with totally geodesic fibers, see \cite[Th. 3.5]{Vilms}. Moreover in the former case  $(p^{-1}(b),\rho|_{p^{-1}(b)})$ is isometric to $(F,h)$ for each $b\in B$ while in the latter case $(\pi^{-1}(b),\tau|_{p^{-1}(b)})$ is isometric to $((0,1)\times F,dr^2+r^2h)$ for each $b\in B$. Clearly the construction of the metric $\tau$ can be performed by replacing $h$ with $ch$ where $c$ is any arbitrarily fixed positive constant. Finally let $g$ be any Riemannian metric on $M$ such that, for some collar neighborhood $\psi:\overline{U}\rightarrow [0,1)\times N$, we have $\psi^*\tau=g|_U$, with $U:=\overline{U}\setminus \partial\overline{U}$. Let us now continue by introducing the space $X$ defined as $X:=\overline{M}/\sim$ with $x\sim y$ if and only if $x,y\in N$ and $\pi(x)=\pi(y).$ It is easy to check that $X$ becomes a smooth Thom-Mather-Witt stratified pseudomanifold of depth one with one singular stratum given by $B$ and corresponding link $F$. In particular the collar neighborhood $\psi:\overline{U}\rightarrow [0,1)\times N$ and the fiber bundle $\pi:(0,1)\times N\rightarrow B$ induce on $X$ an open neighborhood $T_B$ of $B$ and a map $\pi_B:T_B\rightarrow B$ which are a retraction and a locally trivial fibration over $B$ with fiber $C(F)$, respectively. Note that $\reg(X)$ is diffeomorphic to $M$ and therefore $\reg(X)$ inherits the metric $g$ from $M$. It is easy to check that $(\reg(X),g)$ satisfies Def. \ref{iter}. Summarizing $X$ is a Thom-Mather-Witt stratified pseudomanifold of depth one whose regular part $\reg(X)$ is endowed with a conic metric $g$ (sometimes in this setting it is also called incomplete edge metric or wedge metric).

\begin{prop}
\label{exampleRicci}
Let $X$ and $g$ be as describe above. Then:  
\begin{enumerate}
\item If we scale the metric $h$ by a suitable constant $c$  then the corresponding Hodge-de Rham operator $d+d^t:L^2\Omega^{\bullet}(\reg(X),g)\rightarrow L^2\Omega^{\bullet}(\reg(X),g)$ is essentially self-adjoint;
\item For any positive $c$ we have  $\mathrm{ric}^-\in \mathcal{K}(\reg(X))$;
\end{enumerate}
Therefore for $k=1$, $c$ suitably fixed  and any  $2\leq r< \infty,  $Th. \ref{tozziti} and Th. \ref{LpL2} hold true. Consequently 
the map $$\gamma:H^1_{r,\max}(\reg(X),g)\rightarrow H^1_{2}(\reg(X),g)$$ is an isomorphism and  $$\dim(I^{\underline{m}}H^1(X,\mathbb{R}))= \dim(I^{q_r}H^1(X,\mathbb{R}))$$ for any $2\leq r<\infty$. If $X$ is also normal then $$\dim(I^{\underline{m}}H^1(X,\mathbb{R}))= \dim(H^1(X,\mathbb{R})).$$
\end{prop}
\begin{proof}
By  \cite[Th. 6.1, Prop. 5.4]{ALMP}, see also \cite{JB} and \cite{PPBV}, we know that by rescaling with a suitable positive constant $c$ the corresponding Hodge-de Rham operator $d+d^t:L^2\Omega^{\bullet}(\reg(X),g)\rightarrow L^2\Omega^{\bullet}(\reg(X),g)$ is essentially self-adjoint. In order to prove the second point it is enough to show that there exists a constant $\gamma$ such that $\mathrm{sec}_g\geq \gamma$ on $\reg(T_B)$. Since $(\reg(T_B),g|_{\reg(T_B)})$ is isometric to $((0,1)\times N,\pi^*g_B+dr^2+cr^2\tilde{\sigma})$ and $\pi:((0,1)\times N,\pi^*g_B+dr^2+cr^2\tilde{\sigma})\rightarrow (B,g_B)$ is a Riemannian submersion with totally geodesic fibers  we can use \cite{BON} to analyze the sectional curvatures of $(\reg(T_B),g|_{\reg(T_B)})$. More precisely from the third formula in \cite[Cor. 1]{BON} we can deduce that there exists a constant $\delta$ such that for any $q\in (0,1)\times N$ and any pair of horizontal tangent vectors $u,v\in T_{q}((0,1)\times N)$ with $|u|_{\tau}=|v|_{\tau}=1$ and $\tau(u,v)=0$, we have $\mathrm{sec}_\tau(u,v) \geq \gamma$. The second formula in \cite[Cor. 1]{BON} tells us that $\mathrm{sec}_{\tau}(u,v)\geq 0$ whenever $u,v\in T_q((0,1)\times N)$ are a vertical and a horizontal tangent vector, respectively. We are left to examine the case of two vertical tangent vectors. We know that $(\pi^{-1}(b),\tau|_{p^{-1}(b)})$ is isometric to $((0,1)\times F,dr^2+cr^2h)$ for each $b\in B$. The sectional curvatures of $((0,1)\times F,dr^2+cr^2h)$ are nonnegative, as a consequence of the fact that  $\mathrm{sec}_{ch}=\mathrm{sec}_h\geq 1$ and  the calculations carried out in \cite[p. 27-28]{spinconic} or \cite[App. A]{peterli}. Since  $\pi:((0,1)\times N,\tau)\rightarrow (B,g_B)$ is a Riemannian submersion with totally geodesic fibers we can use the first formula in \cite[Cor. 1]{BON} to conclude that $\mathrm{sec}_{\tau}\geq 0$ on vertical tangent vectors.  Summarizing we showed that $(\reg(X),g)$ has sectional curvatures bounded from below and thus, in particular, we have  $\mathrm{ric}^-\in \mathcal{K}(\reg(X))$. The remaining statements of this proposition are now an immediate consequence of Th. \ref{09/07} and Cor. \ref{ricci}.
\end{proof}

Furthermore we point out that other examples of stratified pseudomanifolds carrying an (iterated) conic metric with Ricci curvature bounded from below are discussed in \cite{IlaMo} and \cite{Vertman}.  We have now the next:

\begin{cor}
Let $X$ be a compact and oriented smoothly Thom-Mather-Witt stratified pseudomanifold of dimension $m$. Assume that $\dim(I^{\underline{m}}H^1(X,\mathbb{R}))\neq \dim(I^{q_r}H^1(X,\mathbb{R})$ for some  $2\leq r<\infty$. Then there is no iterated conic metric $g$ on $\reg(X)$ such that both $d+d^t:L^2\Omega^{\bullet}(\reg(X),g)\rightarrow L^2\Omega^{\bullet}(\reg(X),g)$ is essentially self-adjoint and $\mathrm{ric}^-\in \mathcal{K}(\reg(X))$. In particular there is no iterated conic metric $g$ on $\reg(X)$ such that both $d+d^t:L^2\Omega^{\bullet}(\reg(X),g)\rightarrow L^2\Omega^{\bullet}(\reg(X),g)$ is essentially self-adjoint and $\mathrm{Ric}\geq c$ for some $c\in \mathbb{R}$. 
\end{cor}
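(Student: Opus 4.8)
The plan is to derive both assertions from Cor. \ref{ricci} by contraposition. For the first one, suppose toward a contradiction that some iterated conic metric $g$ on $\reg(X)$ makes $d+d^t:L^2\Omega^{\bullet}(\reg(X),g)\rightarrow L^2\Omega^{\bullet}(\reg(X),g)$ essentially self-adjoint and satisfies $\mathrm{ric}^-\in\mathcal{K}(\reg(X))$. Then $X$ and $g$ meet all the hypotheses of Cor. \ref{ricci}: $X$ is compact, oriented and smoothly Thom-Mather-Witt, $g$ is an iterated conic metric making $d+d^t$ essentially self-adjoint, and $\mathrm{ric}^-$ lies in the Kato class of $\reg(X)$. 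Hence Cor. \ref{ricci} yields $\dim(I^{\underline{m}}H^1(X,\mathbb{R}))=\dim(I^{q_r}H^1(X,\mathbb{R}))$ for every $2\leq r<\infty$, which contradicts the standing assumption that this equality fails for at least one $r$. Therefore no such metric can exist.

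For the ``in particular'' clause I would observe that the curvature lower bound $\mathrm{Ric}\geq c$ is, via the Weitzenb\"ock formula \eqref{weit} together with $L_1=\mathrm{Ric}$, precisely the condition $L_1\geq c$. Consequently $0\leq \mathrm{ric}^-=\ell_1^-\leq |c|$ pointwise on $\reg(X)$, so $\mathrm{ric}^-\in L^{\infty}(\reg(X))\subset\mathcal{K}(\reg(X))$ by \eqref{bKato}. Thus an iterated conic metric with $d+d^t$ essentially self-adjoint and $\mathrm{Ric}\geq c$ is in particular one with $d+d^t$ essentially self-adjoint and $\mathrm{ric}^-\in\mathcal{K}(\reg(X))$, a situation already excluded in the first part, and the claim follows.

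There is essentially no obstacle here: the corollary is a formal restatement of the content of Cor. \ref{ricci}. The only point that must be verified — and it is already recorded at the beginning of this subsection — is that the remaining hypotheses underlying Th. \ref{09/07} and Cor. \ref{ricci} (the Sobolev embedding $W^{1,2}_0(\reg(X),g)\hookrightarrow L^{\frac{2m}{m-2}}(\reg(X),g)$, parabolicity, and finite dimensionality with closed images of the relevant $L^r$-cohomology groups, see \cite{ACM}, \cite{BeGu} and Cor. \ref{closed}) hold automatically for any iterated conic metric on $\reg(X)$ when $X$ is a compact smoothly Thom-Mather stratified pseudomanifold; so the contradiction hypothesis supplies exactly the two extra ingredients — essential self-adjointness of $d+d^t$ and the Kato-class condition on $\mathrm{ric}^-$ — that are actually needed.
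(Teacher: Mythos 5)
Your argument is correct and is essentially the paper's own proof: the first assertion is obtained by contraposition from Cor. \ref{ricci}, and the ``in particular'' clause follows because $\mathrm{Ric}\geq c$ gives $0\leq\mathrm{ric}^-\leq|c|$, hence $\mathrm{ric}^-\in\mathcal{K}(\reg(X))$ by \eqref{bKato}. Your additional check that the Sobolev embedding, parabolicity and the closedness/finite-dimensionality hypotheses hold automatically for iterated conic metrics is accurate and consistent with what the paper records at the start of that subsection.
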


\begin{proof}
The first assertion  follows immediately by Cor. \ref{ricci}. The second one follows by the first one and the fact that if $\mathrm{Ric}\geq c$ for some $c\in \mathbb{R}$ then $\mathrm{ric}\in \mathcal{K}(\reg(X))$.
\end{proof}

We continue with the following vanishing result.

\begin{prop}
\label{vanishing}
In the setting of Th. \ref{09/07}. The following properties hold true:
\begin{enumerate}
\item If $L_k\geq 0$ and $L_{k,p}>0$ for some $k\in \{1,...,m-1\}$ and $p\in \reg(X)$ then $I^{\underline{m}}H^k(X,\mathbb{R})=\{0\}$.
\item If $\ell^-_{k-1}\in \mathcal{K}(\reg(X))$,\ $L_k\geq 0$ and $L_{k,p}>0$ for some  $k\in \{1,...,m-1\}$ and $p\in \reg(X)$ then $I^{q_r}H^k(X,\mathbb{R})=\{0\}$ for each $2\leq r <\infty$.
\end{enumerate}
\end{prop}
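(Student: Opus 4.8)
The plan is to deduce both vanishing statements from a Bochner–Weitzenböck argument applied to the harmonic space $\ker(\Delta_k^{\mathcal{F}})=\mathcal{H}^k_{2,\mathrm{abs}}(\reg(X),g)$, together with the identification of this space with intersection cohomology provided by Th. \ref{LpInt} and essential self-adjointness of $d+d^t$. First I would note that since $d+d^t$ is essentially self-adjoint on $L^2\Omega^{\bullet}(\reg(X),g)$, Prop. \ref{oneto} gives $\Delta_{k,\mathrm{abs}}=\Delta_k^{\mathcal{F}}$ for every degree, so $\mathcal{H}^k_{2,\mathrm{abs}}(\reg(X),g)=\ker(\Delta_k^{\mathcal{F}})\cong \overline{H}^k_{2,\max}(\reg(X),g)\cong I^{\underline{m}}H^k(X,\mathbb{R})$ by \eqref{isoabs}, Cor. \ref{closed} and Th. \ref{LpInt}. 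Thus it suffices to show $\ker(\Delta_k^{\mathcal{F}})=\{0\}$.

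For item (1), let $\omega\in \ker(\Delta_k^{\mathcal{F}})$. Using the description \eqref{Fred} of the Friedrichs domain, $\omega\in \mathcal{D}((d_k+d_{k-1}^t)_{\min})$ with $(d_k+d_{k-1}^t)_{\min}\omega\in\mathcal{D}((d_{k-1}+d_k^t)_{\max})$, and $\Delta_k^{\mathcal{F}}\omega=0$ forces $\|(d_k+d_{k-1}^t)_{\min}\omega\|^2=\langle\Delta_k^{\mathcal{F}}\omega,\omega\rangle=0$, so $d_{k,\min}\omega=0$ and $d^t_{k-1,\min}\omega=0$; in particular $\omega\in\mathcal{D}(\nabla_{\min})$ as well (the Friedrichs domain of $\nabla^t\nabla$ coincides with that of $\Delta_k$ here since $L_k$ is bounded below by $0$, hence forms a bounded perturbation in the relevant quadratic-form sense). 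Approximating $\omega$ in graph norm by $\omega_j\in\Omega_c^k(\reg(X))$ and passing to the limit in the Weitzenböck identity $0=\langle\Delta_k^{\mathcal{F}}\omega,\omega\rangle=\|\nabla\omega\|^2+\int_{\reg(X)}g(L_k\omega,\omega)\,\dvol_g$, the hypothesis $L_k\geq 0$ yields $\nabla\omega=0$ and $\int g(L_k\omega,\omega)\dvol_g=0$. A parallel form $\omega$ has constant pointwise norm $|\omega|_g$ (since $\reg(X)$ is connected); if $\omega\not\equiv 0$ then $|\omega|_g$ is a nonzero constant, and the strict positivity $L_{k,p}>0$ at $p\in\reg(X)$ gives $g(L_k\omega,\omega)(p)>0$ on a neighborhood of $p$, so $\int g(L_k\omega,\omega)\dvol_g>0$, a contradiction. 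Hence $\omega=0$ and $I^{\underline{m}}H^k(X,\mathbb{R})=\{0\}$.

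For item (2), I would combine the vanishing $I^{\underline{m}}H^k(X,\mathbb{R})=\{0\}$ just obtained with the equality of dimensions in item (2) of Th. \ref{09/07}: under the extra hypothesis $\ell_{k-1}^-\in\mathcal{K}(\reg(X))$ (note $\ell_k^-\in\mathcal{K}(\reg(X))$ is automatic since $L_k\geq 0$), that theorem gives $\dim(I^{\underline{m}}H^k(X,\mathbb{R}))=\dim(I^{q_r}H^k(X,\mathbb{R}))$ for every $2\leq r<\infty$, so $I^{q_r}H^k(X,\mathbb{R})=\{0\}$ for all such $r$. The main obstacle I anticipate is the careful justification that $\ker(\Delta_k^{\mathcal{F}})$ consists of forms that are genuinely $\nabla$-parallel with the Weitzenböck integration by parts valid without boundary terms — this needs the identification of the Friedrichs quadratic form of $\Delta_k$ with $\|\nabla\cdot\|^2+\int g(L_k\cdot,\cdot)$ on a common form core, which is where the lower bound $L_k\geq 0$ and the density statements of Prop. \ref{oneto} (items 3, 4) do the work; once that is in place, the pointwise-constancy argument for parallel forms on the connected manifold $\reg(X)$ closes both cases.
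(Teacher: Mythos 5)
Your proposal is correct in substance and shares the paper's overall skeleton: the reduction of item (1) to $\ker(\Delta_k^{\mathcal{F}})=\{0\}$ via \eqref{isoabs}, Cor. \ref{closed}, Prop. \ref{oneto} and Th. \ref{LpInt}, and the derivation of item (2) from item (1) together with Th. \ref{09/07}(2) (noting $\ell_k^-=0\in\mathcal{K}(\reg(X))$ automatically), are exactly what the paper does. Where you genuinely diverge is in the Bochner step. The paper does not approximate $\omega$ by compactly supported forms in the form norm; instead it invokes \cite[Prop. 3.5]{FrB} to place $\omega\in\mathcal{D}(\nabla_{\min})$, uses elliptic regularity and Th. \ref{tozzi} to get $\omega\in\Omega^k(\reg(X))\cap L^{\infty}\Omega^k(\reg(X),g)$, and then runs the integration by parts against $\phi_n^2\omega$ for a sequence of Lipschitz cut-offs $\{\phi_n\}$ witnessing parabolicity of $(\reg(X),g)$, controlling the error term by $\|d_0\phi_n\|_{L^2}\|\omega\|_{L^\infty}$. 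Your route --- exploiting that the Friedrichs form domain is the closure of $\Omega_c^k(\reg(X))$ under the form norm, which by the Weitzenb\"ock identity equals $\|\nabla\cdot\|^2+\int g(L_k\cdot,\cdot)$ on the core, and that $L_k\geq 0$ makes both pieces separately non-negative so each is Cauchy along the approximating sequence --- gives $\nabla\omega=0$ and (by Fatou along an a.e.\ convergent subsequence) $\int g(L_k\omega,\omega)\dvol_g=0$ without ever needing the $L^\infty$ bound on harmonic forms, the heat-semigroup mapping properties, or parabolicity. That is a real simplification for this particular statement; what the paper's cut-off technique buys in exchange is flexibility (it does not hinge on non-negativity of the potential to split the form, which is why the same scheme is reused elsewhere under mere Kato-class hypotheses).

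Two small points you should tighten. First, your parenthetical justification that ``the Friedrichs domain of $\nabla^t\nabla$ coincides with that of $\Delta_k$ since $L_k\geq 0$ is a bounded perturbation'' is misstated: $L_k\geq 0$ is a lower bound, not boundedness, and the two form domains need not coincide when $L_k$ is unbounded above. What you actually need, and what your hypothesis does give, is only the inclusion $\mathcal{D}(q_{\Delta_k^{\mathcal{F}}})\subset\mathcal{D}(\nabla_{\min})$, because on the core $\Omega_c^k(\reg(X))$ the Friedrichs form dominates $\|\nabla\cdot\|^2$ when $L_k\geq 0$; phrase it that way (or cite \cite[Prop. 3.5]{FrB} as the paper does). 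Second, record explicitly that $\omega$ is smooth (elliptic regularity, or because it is $\nabla$-parallel), since your final contradiction evaluates $g(L_k\omega,\omega)$ pointwise near $p$ and uses constancy of $|\omega|_g$ on the connected manifold $\reg(X)$; with that noted, your endgame coincides with the paper's.
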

\begin{proof}
The proof of the first point follows the well known strategy used in the classical Bochner-type vanishing theorems. Since the manifold is incomplete, integration by part necessitates a justification. In virtue of \eqref{isoabs} and Cor. \ref{closed} it is enough to show that $\mathcal{H}^k_{2,\mathrm{abs}}(\reg(X),g)=\{0\}$. Moreover, since we are in position to use Prop. \ref{oneto}, the previous equality boils down to showing that $\ker(\Delta_k^{\mathcal{F}})=\{0\}$. Let $\omega$ be any $k$-forms with $\omega\in \ker(\Delta_k^{\mathcal{F}})$. As shown in \cite[Prop. 3.5]{FrB} we know that $\omega\in \mathcal{D}(\nabla_{\min})$ with $$\nabla_{\min}:L^2\Omega^k(\reg(X),g)\rightarrow L^2(\reg(X), T^*\reg(X)\otimes \Lambda^k(\reg(X)),g)$$ the minimal extension of the connection $\nabla: \Omega^k_c(\reg(X))\rightarrow C^{\infty}_c(\reg(X),T^*\reg(X)\otimes \Lambda^k(\reg(X)))$ induced by the Levi-Civita connection. Moreover, by elliptic regularity and Th. \ref{tozzi}, we know that $\omega\in \Omega^{k}(\reg(X))\cap L^{\infty}\Omega^k(\reg(X),g)$, that is $\omega$ is smooth and bounded. 
 Consider now a sequence $\{\phi_n\}_{n\in \mathbb{N}}\subset \mathrm{Lip}_c(M,g)$ that makes $(\reg(X),g)$ parabolic. We have 
\begin{align}
& \nonumber 
\langle \Delta_k^{\mathcal{F}}\omega ,\phi_n^2 \omega\rangle_{L^2\Omega^k(\reg(X),g)}= \langle \nabla^t(\nabla \omega)+L_k\omega ,\phi_n^2 \omega\rangle_{L^2\Omega^k(\reg(X),g)}=\\
& \nonumber \langle \nabla \omega ,\nabla(\phi_n^2 \omega)\rangle_{L^2(\reg(X),T^*\reg(X)\otimes \Lambda^k(\reg(X)),g)}+ \langle L_k(\phi_n\omega) ,\phi_n \omega\rangle_{L^2\Omega^k(\reg(X),g)}=\\
\nonumber & \langle \nabla \omega ,\phi_n^2\nabla \omega\rangle_{L^2(\reg(X),T^*\reg(X)\otimes \Lambda^k(\reg(X)),g)}+ \langle \nabla \omega ,2\phi_n(d_{0}\phi_n)\otimes \omega\rangle_{L^2(\reg(X),T^*\reg(X)\otimes \Lambda^k(\reg(X)),g)}+\\
& \nonumber \langle L_k(\phi_n\omega) ,\phi_n \omega\rangle_{L^2\Omega^k(\reg(X),g)}.
\end{align}

Using the Lebesgue dominate convergence theorem, the Cauchy-Schwartz inequality and the inequality $$\|(d_{0}\phi_n)\otimes \omega\|_{L^2(\reg(X),T^*\reg(X)\otimes \Lambda^k(\reg(X)),g)}\leq \|d_{0}\phi_n\|_{L^2\Omega^1(\reg(X),g)}\|\omega\|_{L^{\infty}\Omega^k(\reg(X),g)}$$ we have
\begin{align}
\nonumber & \lim_{n\rightarrow \infty} \langle \Delta_k^{\mathcal{F}}\omega ,\phi_n^2 \omega\rangle_{L^2\Omega^k(\reg(X),g)}
=\langle \Delta_k^{\mathcal{F}}\omega ,\omega\rangle_{L^2\Omega^k(\reg(X),g)}=0,\\
& \nonumber \lim_{k\rightarrow \infty} \langle \nabla \omega ,\phi_n^2\nabla \omega\rangle_{L^2(\reg(X),T^*\reg(X)\otimes \Lambda^k(\reg(X)),g)}=\langle \nabla \omega ,\nabla \omega\rangle_{L^2(\reg(X),T^*\reg(X)\otimes \Lambda^k(\reg(X)),g)},\\
\nonumber & \lim_{n\rightarrow \infty}\langle \nabla \omega ,2\phi_n(d_{0}\phi_n)\otimes \omega\rangle_{L^2(\reg(X),T^*\reg(X)\otimes \Lambda^k(\reg(X)),g)}=0.
\end{align}

In this way we know that $\lim_{n\rightarrow \infty}\langle L_k(\phi_n\omega) ,\phi_n \omega\rangle_{L^2\Omega^k(\reg(X),g)}$ exists and $$0=\|\nabla \omega\|^2_{L^2(\reg(X),T^*\reg(X)\otimes \Lambda^k(\reg(X)),g)}+\lim_{n\rightarrow \infty}\langle L_k(\phi_n\omega) ,\phi_n \omega\rangle_{L^2\Omega^k(\reg(X),g)}.$$ Since $\langle L_k(\phi_n\omega) ,\phi_n \omega\rangle_{L^2\Omega^k(\reg(X),g)}\geq 0$ for each $n\in \mathbb{N}$ we can immediately conclude that $$\lim_{n\rightarrow \infty}\langle L_k(\phi_n\omega) ,\phi_n \omega\rangle_{L^2\Omega^k(\reg(X),g)}= 0\ \quad\quad \mathrm{and}\quad\quad  \nabla \omega=0$$ and thus, thanks to the Kato inequality, we know that $|\omega|_g$ is constant. Finally let us examine the term $\langle L_k(\phi_n\omega) ,\phi_n \omega\rangle_{L^2\Omega^k(\reg(X),g)}$. We can write it  as $$\langle L_k(\phi_n\omega) ,\phi_n \omega\rangle_{L^2\Omega^k(\reg(X),g)}=\int_{\reg(X)}g(L_k\phi_n\omega,\phi_n\omega)\dvol_g=\|g(L_k\phi_n\omega,\phi_n\omega)\|_{L^1(\reg(X),g)}$$ where in the last equality we used that  $g(L_k\phi_n\omega,\phi_n\omega)\geq 0$. Hence can deduce that 
$$\lim_{n\rightarrow \infty}\|g(L_k\phi_n\omega,\phi_n\omega)\|_{L^1(\reg(X),g)}=0$$ which in turn implies $$\lim_{n\rightarrow \infty}g(L_k\phi_n\omega,\phi_n\omega)=0$$ pointwise almost everywhere on $\reg(X)$. On the other hand it is clear that $$\lim_{n\rightarrow \infty}g(L_k\phi_n\omega,\phi_n\omega)=g(L_k\omega,\omega)$$ pointwise almost everywhere on $\reg(X)$. Thus we get that $g(L_k\omega,\omega)=0$ a.e. on $\reg(X)$. As $L_k\geq 0$ and $L_{k,p}>0$, the form $\omega$ has to vanish in a neighborhood of $p$ and finally, using the fact that $|\omega|_g$ is constant, we can conclude that $\omega=0$. This concludes the proof of the first point. The second point follows immediately by the first one and Th. \ref{09/07}.
\end{proof}

\begin{cor}
In the setting of Th. \ref{09/07}. If $\mathrm{Ric}\geq 0$ and $\mathrm{Ric}_p>0$ for some point $p\in \reg(X)$ then $I^{\underline{m}}H^1(X,\mathbb{R})=I^{q_r}H^1(X,\mathbb{R})=\{0\}$ for any $2\leq r<\infty$.
\end{cor}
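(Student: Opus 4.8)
The plan is to derive both vanishing statements as direct specializations of Proposition \ref{vanishing} to the degree $k=1$. First I recall that the Weitzenb\"ock formula \eqref{weit} in degree one reads $\Delta_1=\nabla^t\circ\nabla+\mathrm{Ric}$, so that $L_1=\mathrm{Ric}$ as a (pointwise self-adjoint) endomorphism of $T^*\reg(X)$. Consequently the hypotheses $\mathrm{Ric}\geq 0$ and $\mathrm{Ric}_p>0$ are exactly the conditions $L_1\geq 0$ and $L_{1,p}>0$ appearing in the first item of Proposition \ref{vanishing}; since $m>2$ we have $1\in\{1,\dots,m-1\}$, so that item applies and yields $I^{\underline{m}}H^1(X,\mathbb{R})=\{0\}$.

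For the equality $I^{q_r}H^1(X,\mathbb{R})=\{0\}$ I would invoke the second item of Proposition \ref{vanishing}, once more with $k=1$. The only extra assumption there is $\ell^-_{k-1}\in\mathcal{K}(\reg(X))$, i.e. $\ell^-_0\in\mathcal{K}(\reg(X))$ in our case. But the Hodge Laplacian on functions has no zeroth order curvature term, $\Delta_0=\nabla^t\circ\nabla$, hence $L_0\equiv 0$ and a fortiori $\ell^-_0\equiv 0$, which belongs to $\mathcal{K}(\reg(X))$ thanks to \eqref{bKato}. Thus the second item applies verbatim and gives $I^{q_r}H^1(X,\mathbb{R})=\{0\}$ for every $2\leq r<\infty$. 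One could also note that taking $r=2$ here, for which $q_2=\underline{m}$, recovers the first vanishing statement.

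Since the whole argument is just this specialization, there is no genuine obstacle to overcome; the one point that deserves to be spelled out is the vanishing of the curvature endomorphism in degree $0$, which is what makes the Kato-class requirement on $\ell^-_0$ automatically satisfied and hence lets the second item of Proposition \ref{vanishing} be applied without any further hypothesis on $(\reg(X),g)$.
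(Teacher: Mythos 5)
Your proposal is correct and takes essentially the same route as the paper: the paper deduces the statement from Cor.~\ref{ricci} together with Prop.~\ref{vanishing}, which amounts to exactly your combination of Prop.~\ref{vanishing}(1) (Bochner vanishing of $I^{\underline{m}}H^1$ since $L_1=\mathrm{Ric}$) and Prop.~\ref{vanishing}(2) (transfer to $I^{q_r}H^1$), using that $\mathrm{Ric}\geq 0$ gives $\mathrm{ric}^-\equiv 0\in\mathcal{K}(\reg(X))$ and, as you correctly note, $L_0\equiv 0$ so $\ell_0^-\equiv 0\in\mathcal{K}(\reg(X))$.
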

\begin{proof}
This follows immediately by Cor. \ref{ricci} and Prop. \ref{vanishing}.
\end{proof}

Now we focus on a special case of compact smoothly Thom-Mather stratified pseudomanifolds. Let $\overline{M}$ a compact  manifold with boundary. Let us denote with $\partial \overline{M}$ and $M$ the boundary and the interior of $\overline{M}$, respectively. Let $g$ be any smooth symmetric section of $T^*\overline{M}\otimes T^*\overline{M}$ that restricts to a Riemannian metric on $M$ and such   that  there exists an open neighborhood $U$ of $\overline{M}$ and a diffeomorphism  $\psi:  [0,1)\times \partial\overline{M}\rightarrow U$ such that $$\psi^*(g|_U)=dx^2+x^2h(x)$$ with $h(x)$ a family of Riemannian metrics on $\partial\overline{M}$ that depends smoothly on $x$ up to $0$. Finally let $X$ be the quotient space defined by $\overline{M}/\sim$ with $p\sim q$ if and only if $p$ and $q\in \partial\overline{M}$. It is immediate to check that $X$ becomes a compact smoothly Thom-Mather stratified pseudomanifold  with only one isolated singularity whose regular part is diffeomorphic to $M$. With a little abuse of notation we still denote with $g$ the Riemannian metric that $\reg(X)$ inherits from $(M,g)$. Clearly Def. \ref{iter} is satisfied by $(\reg(X),g)$.  As we will see, there are at least two important reasons for an in-depth examination of this class of singular spaces: first we  can drop the Witt assumption in many cases and moreover the topological implications arising from the curvature  are sometimes given in terms of the usual singular homology.  

\begin{prop}
\label{conic}
Let $X$ and $g$ be as above with $\dim(X)=m>2$ and let $\nu:=m/2$ if $m$ is even while $\nu:=(m-1)/2$ if $m$ is odd. We have the following properties:
\begin{enumerate}
\item If $\ell_k^-\in \mathcal{K}(\reg(X))$ for some  $k\in \{0,...,m\}$, with $k\neq \nu$ if $m$ is even whereas $k\neq \nu,\nu+1$ if $m$ is odd, then Th. \ref{tozziti} holds true for $(\reg(X),g)$ and $k$. Consequently $$\dim(I^{\overline{m}}H_{m-k}(X,\mathbb{R}))\leq \dim(I^{p_r}H_{m-k}(X,\mathbb{R}))$$ for any $2\leq r<\infty$. Furthermore if  $1<k<\nu$ we have $$\dim(H_{m-k}(X),\mathbb{R})\leq \dim(H_{m-k}(\reg(X),\mathbb{R}))$$ whereas if $k=1$ we have $$\dim(H_{m-1}(X),\mathbb{R})=\dim(\im(H_{m-1}(\reg(X),\mathbb{R})\rightarrow H_{m-1}(X,\mathbb{R}))).$$
\item Let $2\leq r< \infty$ be arbitrarily fixed. If $\ell_{k-1}^-,\ell_k^-\in \mathcal{K}(\reg(X))$ for some $k\in \{0,...,m\}$, with $k-1,k\neq \nu$ if $m$ is even whereas $k-1,k\notin \{ \nu,\nu+1\}$ if $m$ is odd, then the map $$\gamma:H^k_{r,\max}(\reg(X),g)\rightarrow H^k_{2}(\reg(X),g)$$ is an isomorphism, see Th. \ref{LpL2} and Cor. \ref{IsoL2Lq}. Consequently $$\dim(I^{\overline{m}}H_{m-k}(X,\mathbb{R}))= \dim(I^{p_r}H_{m-k}(X,\mathbb{R})).$$  In particular if $1<k<\nu$ then $$\dim(H_{m-k}(\reg(X),\mathbb{R}))= \dim(H_{m-k}(X,\mathbb{R})).$$ 
\item If $\ell_k^-,\ell_{k+1}^-\in \mathcal{K}(\reg(X))$ for some  $k\in \{0,...,m\}$, with $k,k+1\neq \nu$ if $m$ is even whereas $k,k+1\notin \{\nu,\nu+1\}$ if $m$ is odd,  then the $L^z$-Stokes theorem holds true on $L^z\Omega^k(M,g)$ for any $z\in [2,m]$. 
\end{enumerate}
\end{prop}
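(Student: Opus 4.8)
The plan is to derive each of the three items by checking, for $(\reg(X),g)$, the hypotheses of the corresponding general result of Section 2 --- Th. \ref{tozziti} for (1), Th. \ref{LpL2} together with Cor. \ref{IsoL2Lq} for (2), and Th. \ref{LpStokes} for (3) --- and then, in the cases where a purely topological reformulation is claimed, translating the outcome by means of Th. \ref{LpInt}, Cor. \ref{DLpInt} and Poincar\'e duality (Th. \ref{duality}).

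First I would record the hypotheses that hold for every $X$ of the stated form, independently of the curvature assumptions. Since $X$ is compact, $(\reg(X),g)$ is an open, connected, incomplete Riemannian manifold of dimension $m>2$ with $\vol_g(\reg(X))<\infty$; the Sobolev embedding $W^{1,2}_0(\reg(X),g)\hookrightarrow L^{\frac{2m}{m-2}}(\reg(X),g)$ holds by \cite[Prop. 2.2]{ACM}; and, the only singular stratum being the cone point (of codimension $m$ and depth $1$), $(\reg(X),g)$ is $z$-parabolic for every $z\in[1,m]$ by \cite[Th. 3.4]{BeGu}. Moreover, again because $X$ is compact, Cor. \ref{closed} gives that $H^j_{r,\max}(\reg(X),g)$ is finite dimensional and that $\im(d_{j,\max,r})$ is closed in $L^r\Omega^{j+1}(\reg(X),g)$ for all $j$ and all $2\le r<\infty$; in particular $\overline H^j_{r,\max}(\reg(X),g)=H^j_{r,\max}(\reg(X),g)$, and the closedness hypotheses occurring in Th. \ref{LpL2} and Cor. \ref{IsoL2Lq} are free.

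The only genuinely substantive input --- and the main obstacle --- is the identity $\Delta_{j,\mathrm{abs}}=\Delta_j^{\mathcal F}$ on $L^2\Omega^j(\reg(X),g)$, and this is exactly where the dimensional restrictions on the degree are used. Near the singular point $g$ is quasi-isometric to the $m$-dimensional metric cone $dx^2+x^2h(x)$ over the smooth manifold $L=\partial\overline M$, and the classical analysis of the de Rham complex on a metric cone by separation of variables (Cheeger \cite{JC}; see also the discussion of self-adjoint extensions in \cite{BL}) shows that, in the degrees $j$ with $j\neq\nu$ when $m$ is even, resp. $j\notin\{\nu,\nu+1\}$ when $m$ is odd, the de Rham operator is limit-point at the cone tip, so that $\Delta_j$ admits there a unique closed self-adjoint extension; away from the singular point $X$ is a compact manifold with boundary, where no ideal boundary condition is involved, and a standard cut-off argument patches the two regions. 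By Prop. \ref{oneto} this yields $\Delta_{j,\mathrm{abs}}=\Delta_j^{\mathcal F}$ for every such $j$. Granting this, item (1) follows from Th. \ref{tozziti} applied to the single degree $k$ (which needs $k\neq\nu$, resp. $k\notin\{\nu,\nu+1\}$, together with $\ell_k^-\in\mathcal K(\reg(X))$); item (2) follows from Th. \ref{LpL2} and Cor. \ref{IsoL2Lq} applied to the pair of degrees $k-1,k$ (which forces both to avoid the middle degrees and requires $\ell_{k-1}^-,\ell_k^-\in\mathcal K(\reg(X))$); and item (3) follows from Th. \ref{LpStokes} applied with $q=m$, using the $m$-parabolicity recorded above and $\Delta_{k,\mathrm{abs}}=\Delta_k^{\mathcal F}$, $\Delta_{k+1,\mathrm{abs}}=\Delta_{k+1}^{\mathcal F}$ (whence the restriction on $k$ and $k+1$), which gives the $L^z$-Stokes theorem for all $z\in[2,m]$.

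Finally I would carry out the topological bookkeeping. For (1), Th. \ref{tozziti} produces an injection $\overline H^k_{2,\max}(\reg(X),g)\hookrightarrow\overline H^k_{r,\max}(\reg(X),g)$; since all these groups are unreduced and $H^k_{r,\max}(\reg(X),g)\cong I^{p_r}H_{m-k}(X,\mathbb R)$ by Th. \ref{LpInt} (with $p_2=\overline m$), this reads $\dim I^{\overline m}H_{m-k}(X,\mathbb R)\le\dim I^{p_r}H_{m-k}(X,\mathbb R)$ for every $2\le r<\infty$. When $1<k<\nu$ (resp. $k=1$) the degree $m-k$ lies in the range where the Mayer--Vietoris / cone formula for an isolated conical singularity --- recall that the link $L$ is a smooth manifold since $\mathrm{depth}(X)=1$ --- identifies $I^{\overline m}H_{m-k}(X,\mathbb R)$ with $H_{m-k}(X,\mathbb R)$ and, for $r$ in an appropriate subinterval of $[2,\infty)$, $I^{p_r}H_{m-k}(X,\mathbb R)$ with $H_{m-k}(\reg(X),\mathbb R)$ (resp. with $\im(H_{m-1}(\reg(X),\mathbb R)\to H_{m-1}(X,\mathbb R))$), see \cite{Bana,KiWoo}; substituting these identifications into the inequality yields the assertions of (1). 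For (2), Cor. \ref{IsoL2Lq} promotes the injection to an isomorphism $H^k_{r,\max}(\reg(X),g)\cong H^k_2(\reg(X),g)$, so all the preceding inequalities become equalities, giving $\dim I^{\overline m}H_{m-k}(X,\mathbb R)=\dim I^{p_r}H_{m-k}(X,\mathbb R)$ and, for $1<k<\nu$, $\dim H_{m-k}(\reg(X),\mathbb R)=\dim H_{m-k}(X,\mathbb R)$. Item (3) needs no further translation.
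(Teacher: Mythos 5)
Your overall architecture is the same as the paper's: record once and for all that $(\reg(X),g)$ is incomplete with finite volume, satisfies the Sobolev embedding by \cite[Prop. 2.2]{ACM}, is $z$-parabolic for $z\in[1,m]$ (the paper quotes \cite[Cor. 3.7]{BeGu} for the conical case), and has finite-dimensional $L^r$-cohomology with closed images by Cor. \ref{closed}; then feed Th. \ref{tozziti}, Th. \ref{LpL2} with Cor. \ref{IsoL2Lq}, and Th. \ref{LpStokes}, and translate via Th. \ref{LpInt} and the cone formula \eqref{banagl} with $r$ large so that $p_r(m)=0$. That part matches the paper.

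The genuine gap is your justification of the key hypothesis $\Delta_{j,\mathrm{abs}}=\Delta_j^{\mathcal F}$ in the admissible degrees. You argue that in those degrees the operator is ``limit-point at the cone tip, so that $\Delta_j$ admits there a unique closed self-adjoint extension''. This claims essential self-adjointness, which is strictly stronger than what is needed and is in general \emph{false} in several of the allowed degrees. Already for $m=3$, $j=0$ (cone over a surface, e.g. flat $\mathbb{R}^3\setminus\{0\}$) the scalar Laplacian on compactly supported functions is not essentially self-adjoint: in the harmonic sector of the link the radial operator reduces to a Schr\"odinger operator on the half-line with inverse-square coupling constant $\tfrac{(m-1-2j)(m-3-2j)}{4}=0<\tfrac34$, hence limit-circle. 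More relevantly, for $m$ even and $j=\nu-1=\tfrac{m-2}{2}$ (an allowed degree), the same computation gives coupling constant $-\tfrac14$ in the sector of harmonic $j$-forms of the link, so whenever $H^{(m-2)/2}(\partial\overline M,\mathbb{R})\neq 0$ the operator $\Delta_{\nu-1}$ has infinitely many self-adjoint extensions. Nevertheless the equality $\Delta_{j,\mathrm{abs}}=\Delta_j^{\mathcal F}$ does hold in all these degrees; but proving it requires identifying the ideal boundary conditions of the two \emph{specific} extensions (absolute vs. Friedrichs) in the limit-circle sectors, not a limit-point/limit-circle dichotomy, and this is exactly the content of the result the paper invokes, namely \cite[Th. 3.7, Th. 3.8]{BLE}. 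So your proof of the crucial analytic input would fail as written; replacing your sketch by that citation (or by a genuine analysis of the boundary conditions at the tip) repairs the argument, after which the rest of your bookkeeping — including the trivial extra remark that for $k=1$ the inequality forces equality because $\im(H_{m-1}(\reg(X),\mathbb{R})\rightarrow H_{m-1}(X,\mathbb{R}))$ is a subspace of $H_{m-1}(X,\mathbb{R})$ — goes through as in the paper.
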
 

\begin{proof}
Thanks to \cite[Th. 3.7, Th. 3.8]{BLE} we know that $\Delta_j^{\mathcal{F}}$ equals $\Delta_{j,\mathrm{abs}}$ for each $j\in \{0,1,...,m\}$, with $j\neq \nu$ if $m$ is even while  $j\notin \{\nu,\nu+1\}$ if $m$ is odd. Therefore Th. \ref{tozziti} holds true for $(\reg(X),g)$ and $k$ and hence, arguing as in Th. \ref{09/07}, we can conclude that $$\dim(I^{\overline{m}}H_{m-k}(X,\mathbb{R}))\leq \dim(I^{p_r}H_{m-k}(X,\mathbb{R}))$$ for any $2\leq r<\infty$. Assume now that  $1<k<\nu$. As showed in \cite[Example 4.1.12]{Bana} we have 
\begin{equation}
\label{banagl}
I^pH_i(X,\mathbb{R})\cong \left\{
\begin{array}{lll}
H_i(\reg(X),\mathbb{R}) & i<m-1-p(m)\\ 
\im(H_i(\reg(X),\mathbb{R})\rightarrow H_i(X,\mathbb{R})) & i=m-1-p(m)\\
H_i(X,\mathbb{R}) & i>m-1-p(m)
\end{array}
\right.
\end{equation}
 with $p$ any perversity. If we choose $r$  big enough we have $p_r(m)=0$ and thus we get $I^{p_r}H_i(X,\mathbb{R})\cong H_i(\reg(X),\mathbb{R})$ with $0\leq i<m-1$ and $I^{p_r}H_{m-1}(X,\mathbb{R})\cong \im(H_{m-1}(\reg(X),\mathbb{R})\rightarrow H_{m-1}(X,\mathbb{R}))$. On the other hand we have seen that $\dim(I^{\overline{m}}H_{m-k}(X,\mathbb{R}))\leq \dim(I^{p_r}H_{m-k}(X,\mathbb{R}))$  and according to \eqref{banagl} we have $I^{\overline{m}}H_{m-i}(X,\mathbb{R})\cong H_{m-i}(X,\mathbb{R})$ if  $m-i>m-1-[\frac{m-1}{2}]$ that is,  if $i<1+[\frac{m-1}{2}]$. Since $1+[\frac{m-1}{2}]=m/2=\nu$ when $m$ is even whereas  $1+[\frac{m-1}{2}]=\frac{m+1}{2}=\nu+1$ when $m$ is odd we can conclude that 
if $1<k<\nu$ it holds $$\dim(H_{m-k}(X,\mathbb{R}))\leq \dim(H_{m-k}(\reg(X),\mathbb{R}))$$ whereas if $k=1$ then $$\dim(H_{m-1}(X,\mathbb{R}))=\dim(\im(H_{m-1}(\reg(X),\mathbb{R})\rightarrow H_{m-1}(X,\mathbb{R}))).$$
Concerning the second point we are in position to apply Th. \ref{LpL2} and Cor. \ref{IsoL2Lq}. Therefore $\gamma:H^k_{r,\max}(\reg(X),g)\rightarrow H^k_{2}(\reg(X),g)$ is an isomorphism and $\dim(I^{\overline{m}}H_{m-k}(X,\mathbb{R}))= \dim(I^{p_r}H_{m-k}(X,\mathbb{R}))$. In particular if  $1<k<\nu$, by choosing $r$ sufficiently big and applying \eqref{banagl}, the previous equality becomes $\dim(H_{m-k}(\reg(X),\mathbb{R}))= \dim(H_{m-k}(X,\mathbb{R}))$. Finally we tackle the last point. Thanks to \cite[Cor. 3.7]{BeGu} we know that $(\reg(X),g)$ is $z$-parabolic for any $z\in [1,m]$. Now it is clear that all the assumptions of Th. \ref{LpStokes} are fulfilled and so the  $L^z$-Stokes theorem holds true on $L^z\Omega^k(M,g)$ for any $z\in [2,m]$. 
\end{proof}
Also in this case we remark that the $L^2$-Stokes theorem was already known without curvature assumptions, see \cite{BLE}. The novelty, likewise Th. \ref{09/07}, lies in the cases $(2,m]$. 

\begin{cor}
\label{vanishingconic}
In the setting of Prop. \ref{conic}. The following properties hold true:
\begin{enumerate}
\item If $L_k\geq 0$ and $L_{k,p}>0$ for some $p\in \reg(X)$ and $k\in \{1,...,m-1\}$, with $k\neq \nu$ if $m$ is even whereas $k\neq \nu,\nu+1$ if $m$ is odd, then $$I^{\overline{m}}H_{m-k}(X,\mathbb{R})=\{0\}.$$
\item If $\ell^-_{k-1}\in \mathcal{K}(\reg(X))$,\ $L_k\geq 0$ and $L_{k,p}>0$ for some  $p\in \reg(X)$ and $k\in \{1,...,m-1\}$, with $k\neq \nu$ if $m$ is even whereas $k-1,k\notin\{\nu,\nu+1\}$ if $m$ is odd, then $$I^{p_r}H_{m-k}(X,\mathbb{R})=\{0\}$$ for each $2\leq r <\infty$. In particular if $1<k<\nu$ then $$H_{m-k}(\reg(X),\mathbb{R})=H_{m-k}(X,\mathbb{R})=\{0\}.$$
\item If $\mathrm{Ric}\geq 0$ and $\mathrm{Ric}_p>0$ for some point $p\in \reg(X)$ then $$I^{\overline{m}}H_{m-1}(X,\mathbb{R})=I^{p_r}H_{m-1}(X,\mathbb{R})=\im(H_{m-1}(\reg(X),\mathbb{R})\rightarrow H_{m-1}(X,\mathbb{R}))=\{0\}$$ for any $2\leq r<\infty$.
\end{enumerate}
\end{cor}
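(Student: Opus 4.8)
The plan is to derive all three assertions from the material of Sections~2 and~3, the only genuinely analytic ingredient being the Bochner-type integration by parts that has already been carried out in the proof of the first point of Prop.~\ref{vanishing}. For point (1) I would first note that $L_k\ge 0$ forces $\ell_k^-\equiv 0\in\mathcal{K}(\reg(X))$, while the exclusion of $\nu$ (resp.\ of $\nu$ and $\nu+1$) guarantees, through \cite[Th.~3.7, Th.~3.8]{BLE}, that $\Delta_{k,\mathrm{abs}}=\Delta_k^{\mathcal F}$. Then \eqref{isoabs}, Cor.~\ref{closed} and Th.~\ref{LpInt} (with $p_2=\overline m$) yield the chain $I^{\overline m}H_{m-k}(X,\mathbb{R})\cong H^k_{2,\max}(\reg(X),g)\cong\mathcal{H}^k_{2,\mathrm{abs}}(\reg(X),g)=\ker(\Delta_k^{\mathcal F})$, so it is enough to check $\ker(\Delta_k^{\mathcal F})=\{0\}$. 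At this point the argument is exactly the one in Prop.~\ref{vanishing}(1): an $\omega\in\ker(\Delta_k^{\mathcal F})$ is smooth by ellipticity, bounded by Prop.~\ref{tozzi}, and lies in $\mathcal{D}(\nabla_{\min})$ by \cite[Prop.~3.5]{FrB}; pairing $\Delta_k^{\mathcal F}\omega$ with $\phi_n^2\omega$ along a parabolic sequence $\{\phi_n\}$ and letting $n\to\infty$ forces $\nabla\omega=0$ and $g(L_k\omega,\omega)=0$ almost everywhere, whence $|\omega|_g$ is constant by Kato's inequality and the pointwise positivity of $L_k$ at $p$ makes $\omega$ vanish.

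For point (2) I would invoke point (1) for the same index $k$ (its hypotheses being implied by those of (2)) to get that $H^k_2(\reg(X),g)$ — a single group since $d_{k-1,\max,2}=d_{k-1,\min,2}$ and $d_{k,\max,2}=d_{k,\min,2}$ by Prop.~\ref{oneto} — coincides with $H^k_{2,\max}(\reg(X),g)=\{0\}$. All the hypotheses of Th.~\ref{LpL2} are then in force: $\vol_g(\reg(X))<\infty$; $\ell_{k-1}^-,\ell_k^-\in\mathcal{K}(\reg(X))$; $\im(d_{k-1,\max,r})$ and $\im(d_{k-1,\max,2})$ closed by Cor.~\ref{closed}; the Sobolev inclusion $W^{1,2}_0(\reg(X),g)\hookrightarrow L^{\frac{2m}{m-2}}(\reg(X),g)$ by \cite[Prop.~2.2]{ACM}; and $\Delta_{k,\mathrm{abs}}=\Delta_k^{\mathcal F}$, $\Delta_{k-1,\mathrm{abs}}=\Delta_{k-1}^{\mathcal F}$ by \cite[Th.~3.7, Th.~3.8]{BLE}. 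Hence $\gamma\colon H^k_{r,\max}(\reg(X),g)\to H^k_2(\reg(X),g)$ is injective (in fact an isomorphism, by Cor.~\ref{IsoL2Lq} or the second point of Prop.~\ref{conic}), so $H^k_{r,\max}(\reg(X),g)=\{0\}$ and therefore $I^{p_r}H_{m-k}(X,\mathbb{R})=\{0\}$ by Th.~\ref{LpInt}, for every $2\le r<\infty$. For the last line I would take $r$ so large that $p_r\equiv 0$; then \eqref{banagl} applied at degree $i=m-k<m-1$ gives $H_{m-k}(\reg(X),\mathbb{R})\cong I^0H_{m-k}(X,\mathbb{R})=\{0\}$, and the equality $\dim H_{m-k}(\reg(X),\mathbb{R})=\dim H_{m-k}(X,\mathbb{R})$ recorded in Prop.~\ref{conic}(2) promotes this to $H_{m-k}(X,\mathbb{R})=\{0\}$.

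Point (3) is the case $k=1$ of the previous two: the Weitzenb\"ock term is $L_1=\mathrm{Ric}$, so $\mathrm{Ric}\ge 0$ and $\mathrm{Ric}_p>0$ say exactly $L_1\ge 0$ and $L_{1,p}>0$, while $L_0\equiv 0$ gives $\ell_0^-\equiv 0\in\mathcal{K}(\reg(X))$. Thus point (1) yields $I^{\overline m}H_{m-1}(X,\mathbb{R})=\{0\}$ and point (2) yields $I^{p_r}H_{m-1}(X,\mathbb{R})=\{0\}$ for all $2\le r<\infty$; choosing $r$ with $p_r\equiv 0$ and applying \eqref{banagl} at degree $i=m-1$ identifies $I^0H_{m-1}(X,\mathbb{R})$ with $\im\bigl(H_{m-1}(\reg(X),\mathbb{R})\to H_{m-1}(X,\mathbb{R})\bigr)$, which is therefore trivial. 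The only real obstacle is not in this corollary at all but upstream, in the justification of the Bochner identity on the incomplete manifold $(\reg(X),g)$ performed in Prop.~\ref{vanishing}; here one just has to be careful that the degrees involved ($k-1$ and $k$, and $0$ in point (3)) stay away from the critical degrees $\nu,\nu+1$, so that the absolute, relative and Friedrichs extensions of the relevant Hodge Laplacians all agree.
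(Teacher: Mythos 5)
Your proposal is correct and follows essentially the same route as the paper: the paper's own proof is just the one-line observation that the statement follows from Prop.~\ref{vanishing}, Prop.~\ref{conic} and \eqref{banagl}, and what you do is unpack exactly those ingredients (the Bochner argument of Prop.~\ref{vanishing}(1) run in the conic setting where $\Delta_k^{\mathcal F}=\Delta_{k,\mathrm{abs}}$ by \cite[Th.~3.7, Th.~3.8]{BLE}, the injectivity/isomorphism of $\gamma$ from Th.~\ref{LpL2} and Cor.~\ref{IsoL2Lq} already recorded in Prop.~\ref{conic}(2), and the identification \eqref{banagl} for $r$ large). Your explicit verification of the hypotheses and of the degree restrictions away from $\nu,\nu+1$ is simply a more detailed rendering of the same argument.
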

\begin{proof}
This follows immediately by Prop. \ref{vanishing}, Prop. \ref{conic} and \eqref{banagl}.
\end{proof}

Clearly the above results can be used to exhibit examples of compact smoothly Thom-Mather stratified pseudomanifolds with isolated singularities that do not carry any conic metric having the negative part of $L_k$ in the Kato class. To this aim we have the next

\begin{prop}
\label{noRiccibb}
Let $N$ be a compact and oriented manifold with $\dim(H^1(N,\mathbb{R}))>1$.  Let $\overline{M}:=N\times [0,1]$ and let $X$ be the space obtained by collapsing the boundary of $\overline{M}$ to a point, see Prop. \ref{conic}. Then there is no conic metric $g$ on $\reg(X)$ such that $\mathrm{ric^-}\in \mathcal{K}(\reg(X))$. In particular there is no conic metric  on $\reg(X)$ with Ricci curvature bounded from below.
\end{prop}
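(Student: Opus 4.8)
The plan is to argue by contradiction, using item (1) of Prop. \ref{conic} in degree $k=1$. Suppose that $\reg(X)$ carried a conic metric $g$ with $\mathrm{ric}^-\in\mathcal{K}(\reg(X))$. By the construction preceding Prop. \ref{conic}, $X$ is a compact smoothly Thom-Mather stratified pseudomanifold with a single isolated singular point (whose link is $\partial\overline{M}$), and $\reg(X)$ is diffeomorphic to the interior $M=N\times(0,1)$ of $\overline{M}=N\times[0,1]$; since $g$ has the prescribed conic form near the singularity, $(\reg(X),g)$ satisfies all the standing hypotheses of Prop. \ref{conic}. As $\reg(X)$ is connected so is $N$, and then $\dim H^1(N,\mathbb{R})>1$ forces $\dim N\geq2$, hence $m:=\dim X=\dim N+1\geq3$. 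Recalling that $L_1=\mathrm{Ric}$, so that $\ell_1^-=\mathrm{ric}^-\in\mathcal{K}(\reg(X))$ by hypothesis, and that $k=1$ meets the numerical restriction of Prop. \ref{conic}, item (1) of that proposition applied with $k=1$ gives
\[
\dim H_{m-1}(X,\mathbb{R})=\dim\bigl(\im\bigl(H_{m-1}(\reg(X),\mathbb{R})\to H_{m-1}(X,\mathbb{R})\bigr)\bigr).
\]

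The second step is to evaluate the two sides of this identity independently. On one side, $\reg(X)=M$ deformation retracts onto $N\times\{1/2\}\cong N$, a closed, connected and oriented manifold of dimension $m-1$, so $H_{m-1}(\reg(X),\mathbb{R})\cong H_{m-1}(N,\mathbb{R})\cong\mathbb{R}$ is $1$-dimensional; in particular the image on the right-hand side above has dimension at most $1$. On the other side, $X=\overline{M}/\partial\overline{M}$ and $(\overline{M},\partial\overline{M})$ is a good pair, so (as $m-1\geq1$) $H_{m-1}(X,\mathbb{R})\cong H_{m-1}(\overline{M},\partial\overline{M};\mathbb{R})$, and Lefschetz duality for the compact oriented manifold with boundary $\overline{M}$ yields $H_{m-1}(\overline{M},\partial\overline{M};\mathbb{R})\cong H^{1}(\overline{M},\mathbb{R})\cong H^{1}(N,\mathbb{R})$, so $\dim H_{m-1}(X,\mathbb{R})=\dim H^1(N,\mathbb{R})>1$. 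Feeding both facts into the displayed identity produces $1<\dim H^1(N,\mathbb{R})=\dim H_{m-1}(X,\mathbb{R})\leq1$, a contradiction; hence no conic metric $g$ on $\reg(X)$ can have $\mathrm{ric}^-\in\mathcal{K}(\reg(X))$.

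For the last two assertions: if a conic metric $g$ on $\reg(X)$ satisfied $\mathrm{Ric}\geq c$ for some $c\in\mathbb{R}$, then, using $L_1=\mathrm{Ric}$, one would get $0\leq\mathrm{ric}^-=\ell_1^-\leq|c|$, hence $\mathrm{ric}^-\in L^{\infty}(\reg(X))\subset\mathcal{K}(\reg(X))$ by \eqref{bKato}, contradicting what was just proved; and an Einstein-conic metric has $\mathrm{Ric}=\lambda g$ for a constant $\lambda$, in particular Ricci curvature bounded below, and so is excluded as well. I do not expect a genuine obstacle in this argument: once Prop. \ref{conic}(1) is invoked, everything reduces to the elementary Lefschetz-duality computation of $H_{m-1}(\reg(X),\mathbb{R})$ and $H_{m-1}(X,\mathbb{R})$ carried out above. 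The only point deserving real care is the verification that $k=1$ lies outside the excluded middle range of Prop. \ref{conic}---equivalently, that $\Delta_{1,\mathrm{abs}}=\Delta_1^{\mathcal{F}}$ for the metric at hand---which is automatic once $m\geq4$ and requires a short separate argument in the single remaining case $m=3$.
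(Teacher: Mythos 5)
Your argument is correct and is essentially the paper's own proof: the paper likewise invokes Prop.~\ref{conic}(1) with $k=1$ and derives the contradiction $\dim(H_{m-1}(X,\mathbb{R}))=\dim(H^1(N,\mathbb{R}))>1\geq \dim(H_{m-1}(\reg(X),\mathbb{R}))$, the only cosmetic difference being that it computes $\dim(H_{m-1}(\reg(X),\mathbb{R}))=\dim(H^1_c(N\times(0,1),\mathbb{R}))=1$ via Poincar\'e duality and Bott--Tu rather than via the retraction onto $N$ and top homology. Your closing caveat about the case $m=3$ (where $k=1$ lies in the degrees excluded in Prop.~\ref{conic}, since $\Delta_{1,\mathrm{abs}}=\Delta_1^{\mathcal{F}}$ is then not guaranteed by the cited Br\"uning--Lesch results) is a sharp observation, but the paper's proof does not address it either---it simply cites Prop.~\ref{conic}---so it is not a defect of your argument relative to the paper's.
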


\begin{proof}
Clearly $\reg(X)\cong M\times (0,1)$ and $\dim(H^1(M\times (0,1),\mathbb{R}))=\dim(H^1(M,\mathbb{R}))>1$. By using Lefschetz  duality we have $H^1(M\times (0,1),\mathbb{R})\cong H_{m-1}(\overline{M},\partial\overline{M},\mathbb{R})$ and finally, as explained for instance in  \cite[Rmk. 4.4.2]{KiWoo}, we have $H_{m-1}(\overline{M},\partial\overline{M},\mathbb{R})\cong H_{m-1}(X,\mathbb{R})$. Eventually we showed that $\dim(H_{m-1}(X,\mathbb{R}))>1$. On the other hand we have $\dim(H_{m-1}(\reg(X),\mathbb{R}))=\dim(H_c^1(M\times (0,1),\mathbb{R}))=1$, see \cite[Prop. 4.7]{Bott}, and so, by virtue of Prop. \ref{conic}, we can conclude that there is no conic metric $g$ on $\reg(X)$ such that $\mathrm{ric^-}\in \mathcal{K}(\reg(X))$ since $\dim(H_{m-1}(X,\mathbb{R}))>\dim(H_{m-1}(\reg(X),\mathbb{R}))$. The remaining assertions follow immediately.
\end{proof}

\begin{cor}
Let $N_1$, $N_2$ be  compact and oriented manifolds with $\dim(H^1(N_j,\mathbb{R})>0$, $j=1,2$. Let $N:=N_1\times N_2$. Then, with the same notation as in Prop. \ref{noRiccibb}, there is no conic metric  $g$ on $\reg(X)$ with $\mathrm{ric}^-\in \mathcal{K}(\reg(X))$. In particular there is no conic metric  on $\reg(X)$ with Ricci curvature bounded from below.
\end{cor}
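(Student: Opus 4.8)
The plan is to deduce this directly from Proposition \ref{noRiccibb}: the only thing that needs checking is that $N=N_1\times N_2$ satisfies the hypothesis $\dim(H^1(N,\mathbb{R}))>1$ of that proposition.

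To see this I would apply the Künneth formula with real coefficients. Over a field there are no torsion contributions, and $\dim(H^0(N_j,\mathbb{R}))\geq 1$, so
\[
\dim(H^1(N_1\times N_2,\mathbb{R}))\geq \dim(H^1(N_1,\mathbb{R}))+\dim(H^1(N_2,\mathbb{R}))\geq 1+1=2,
\]
hence $\dim(H^1(N,\mathbb{R}))>1$. Furthermore $N=N_1\times N_2$ is compact, as a product of compact spaces, and oriented, with the product orientation. Thus $N$ fulfills all the hypotheses of Proposition \ref{noRiccibb}.

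Applying Proposition \ref{noRiccibb} to $\overline{M}:=N\times[0,1]$ and the corresponding stratified pseudomanifold $X$ then yields that there is no conic metric $g$ on $\reg(X)$ with $\mathrm{ric}^-\in\mathcal{K}(\reg(X))$. The remaining two assertions follow just as in Proposition \ref{noRiccibb}: if some conic metric had $\mathrm{Ric}\geq c$ with $c\in\mathbb{R}$, then $\mathrm{ric}^-\leq|c|$ would be bounded and hence would lie in $\mathcal{K}(\reg(X))$ by \eqref{bKato}, contradicting the above; and an Einstein-conic metric has in particular Ricci curvature bounded from below, so no such metric can exist. Since the statement is a formal corollary of Proposition \ref{noRiccibb}, I do not expect any real obstacle; the only small point is to invoke Künneth over $\mathbb{R}$, where the degree-one part of the product is exactly the direct sum of the degree-one groups of the factors (up to the one-dimensional $H^0$ factors), so that the first Betti numbers add.
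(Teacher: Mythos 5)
Your proposal is correct and follows the same route as the paper: the paper's proof is exactly the observation that $\dim(H^1(N,\mathbb{R}))>1$ (via K\"unneth over $\mathbb{R}$, so the first Betti numbers add) and then an application of Prop. \ref{noRiccibb}. Your additional remarks on the bounded-Ricci and Einstein cases just repeat what is already contained in Prop. \ref{noRiccibb}, so nothing further is needed.
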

\begin{proof}
This follows immediately by Prop. \ref{noRiccibb} as $\dim(H^1(N,\mathbb{R}))>1$.
\end{proof}

In view of the next corollary we recall that $\Sigma N$, the suspension of $N$, is defined as $\Sigma N:= N\times [0,1]/\sim$
with $p\sim q$ if and only if either $p,q\in N\times \{0\}$ or $p,q\in N\times \{1\}$.

\begin{cor}
Let $N$ be a compact and oriented manifold with $\dim(H^1(N,\mathbb{R})>1$.  Then there is no conic metric $g$ on $\reg(\Sigma N)$ with $\mathrm{ric}^-\in \mathcal{K}(\reg(\Sigma N))$.  In particular there is no conic metric  on $\reg(\Sigma N)$ with Ricci curvature bounded from below.
\end{cor}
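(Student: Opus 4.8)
The plan is to deduce the statement from Prop.~\ref{noRiccibb}. The point is that the suspension $\Sigma N$ and the pseudomanifold $X$ appearing in Prop.~\ref{noRiccibb} --- the space obtained from $\overline{M}=N\times[0,1]$ by collapsing its \emph{whole} boundary $N\times\{0\}\sqcup N\times\{1\}$ to a single point --- have the same regular part, namely $N\times(0,1)$, and carry exactly the same conic metrics.

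First, $\reg(\Sigma N)=N\times(0,1)=\reg(X)$: the relation defining $\Sigma N$ collapses $N\times\{0\}$ to one vertex and $N\times\{1\}$ to another and leaves $N\times(0,1)$ untouched, while the relation defining $X$ collapses $N\times\{0\}\sqcup N\times\{1\}$ to a single vertex and again leaves $N\times(0,1)$ untouched. Next, a smooth Riemannian metric on $N\times(0,1)$ is a conic metric for $\Sigma N$ if and only if it is a conic metric for $X$. Indeed, for $X$ the unique singular point has the \emph{disconnected} link $L=N\sqcup N$, so $\reg(C(L))=N\times(0,2)\sqcup N\times(0,2)$ and an iterated conic metric $g_L$ on $\reg(L)$ is nothing but a pair $(g^{(0)},g^{(1)})$ of smooth metrics on $N$, whence $dr^2+r^2g_L$ splits as $dr^2+r^2g^{(0)}$ on one sheet and $dr^2+r^2g^{(1)}$ on the other. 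Thus the single condition that Def.~\ref{iter} imposes at the vertex of $X$ is precisely the conjunction of the two conditions that Def.~\ref{iter} imposes at the two separate vertices of $\Sigma N$: quasi-isometry to $dr^2+r^2g^{(0)}$ near the end $N\times\{0\}$ and to $dr^2+r^2g^{(1)}$ near the end $N\times\{1\}$ of the cylinder.

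With this identification the corollary is immediate. The Kato class $\mathcal{K}(\cdot)$ and the function $\mathrm{ric}^-$ depend only on the underlying Riemannian manifold --- the former because it is defined through the heat kernel of $\Delta_0^{\mathcal{F}}$ on $(\reg,g)$, the latter because it is read off the Ricci endomorphism of $g$ --- and $\reg(\Sigma N)$ and $\reg(X)$ are one and the same Riemannian manifold $(N\times(0,1),g)$. Hence a conic metric $g$ on $\reg(\Sigma N)$ with $\mathrm{ric}^-\in\mathcal{K}(\reg(\Sigma N))$ would also be a conic metric on $\reg(X)$ with $\mathrm{ric}^-\in\mathcal{K}(\reg(X))$, which Prop.~\ref{noRiccibb} forbids (its hypothesis $\dim H^1(N,\mathbb{R})>1$ being exactly the one assumed here). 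The remaining assertions follow as usual: a lower bound $\mathrm{Ric}\ge c$ gives $0\le\mathrm{ric}^-\le|c|$, so $\mathrm{ric}^-\in L^\infty(\reg(\Sigma N))\subset\mathcal{K}(\reg(\Sigma N))$ by \eqref{bKato}, and an Einstein-conic metric satisfies $\mathrm{Ric}=\lambda g$ and so has bounded Ricci curvature.

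The only step requiring some care is the second one, namely checking that collapsing the two ends of the cylinder to one vertex rather than to two yields the same family of conic metrics; this relies only on the elementary fact that the metric cone over a disjoint union is a disjoint union of metric cones, and no analytic input beyond Prop.~\ref{noRiccibb} is needed. Alternatively one could avoid the reduction and argue directly, computing $\dim H_{m-1}(\reg(\Sigma N),\mathbb{R})=1$ together with $H_{m-1}(\Sigma N,\mathbb{R})\cong H_{m-2}(N,\mathbb{R})\cong H^1(N,\mathbb{R})$ via the suspension isomorphism and Poincar\'e duality on $N$, and then invoking the $k=1$ case of the analogue of Prop.~\ref{conic} for a pseudomanifold with two isolated conical singularities; that route, however, forces one to re-prove such an analogue, which the reduction above sidesteps entirely.
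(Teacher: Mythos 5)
Your reduction is correct and is essentially the paper's own argument: the paper expresses the same identification by observing that $\Sigma N$ is the normalization of $X$ (so regular parts, conic metrics and intersection homology agree), while you verify directly that $\reg(\Sigma N)=\reg(X)=N\times(0,1)$ and that the conic conditions at the two vertices of $\Sigma N$ coincide with the conic condition at the single vertex of $X$, whose link is $N\sqcup N$. Both routes then conclude from Prop.~\ref{noRiccibb}, together with the standard remarks that $\mathrm{Ric}\geq c$ forces $\mathrm{ric}^-\in L^{\infty}\subset \mathcal{K}(\reg(\Sigma N))$ and that Einstein metrics have bounded Ricci curvature.
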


\begin{proof}
It is easy to note that $\Sigma N$ is the normalization of $X$, with $X$ defined as in Prop. \ref{conic}. Since  $I^pH_k(X,\mathbb{R})\cong I^pH_k(\Sigma,\mathbb{R})$, see  \cite[Th. 6.6.6]{Bana} or \cite[pag. 151]{GoMac}, the conclusion follows immediately from Prop. \ref{conic} and Prop. \ref{noRiccibb}.
\end{proof}

\subsection{Applications to complex projective varities}
The last part of this paper contains some applications to complex projective varieties. For the corresponding definitions we refer to \cite{GHa}. Let us consider a complex projective variety $V\subset \mathbb{C}\mathbb{P}^n$ of complex dimension $v$ and such that $\dim(\sing(V))=0$. Let us denote with $h$ the K\"ahler metric on $\reg(V)$ induced by $g$, the Fubini-Study metric on $\mathbb{C}\mathbb{P}^n$. We have the following:

\begin{teo}
\label{projvar}
Let $V$ and $h$ be as above  with $v>1$. Assume that $\ell_k^-\in \mathcal{K}(\reg(V))$ for some $k\in \{0,...,2v\}$, $k\notin \{v\pm1, v\}$. Then
\begin{equation}
\label{cotto}
\dim(I^{\underline{m}}H^k(V,\mathbb{R}))\leq \dim(I^{t}H^k(V,\mathbb{R})).
\end{equation}
If $V$ is normal then 
\begin{equation}
\label{ricotto}
\dim(I^{\underline{m}}H^k(V,\mathbb{R}))\leq \dim(H^k(V,\mathbb{R})).
\end{equation}
 If  $1<k<v-1$ then $$ \dim(H_{2v-k}(V,\mathbb{R}))\leq \dim(H_{2v-k}(\reg(V),\mathbb{R}))$$ whereas if $k=1$ we have $$\dim(H_{2v-1}(V),\mathbb{R})=\dim(\im(H_{2v-1}(\reg(V),\mathbb{R})\rightarrow H_{2v-1}(V,\mathbb{R}))).$$
Finally if we assume  that $L_k\geq c$ and  $L_{k,p}\geq c$ for some $p\in \reg(V)$, then $$I^{\underline{m}}H^k(V,\mathbb{R})=\{0\}.$$
\end{teo}

\begin{proof}
It is well know that $(\reg(V),h)$ is an incomplete K\"ahler manifold of finite volume. For instance this can be easily  deduced using a resolution of singularities $\pi:M\rightarrow V$. Moreover we know that the Sobolev inequality \eqref{imagine} holds true for $(\reg(V),h)$, see \cite[Eq. (5.5)]{LT}, and $\Delta_j^{\mathcal{F}}:L^2\Omega^j(\reg(V),h)\rightarrow L^2\Omega^j(\reg(V),h)$ equals $\Delta_{j,\mathrm{abs}}:L^2\Omega^j(\reg(V),h)\rightarrow L^2\Omega^j(\reg(V),h)$ when $j\neq \{v,v\pm1\}$, see \cite[Th. 1.2]{GL}. Furthermore thanks to \cite{TOh} we know that $H^j_{2,\max}(\reg(V),h)\cong I^{\underline{m}}H^j(V,\mathbb{R})$ for each $j=0,...,2v$ which in turn implies that $\dim(H^j_{2,\max}(\reg(V),h))<\infty$ and $\im(d_{j,\max,2})$ is closed in $L^2\Omega^{j+1}(\reg(V),h)$ for each $j$. Hence thanks to Th. \ref{tozziti}, Prop. \ref{oneto} and the above remarks we know that $\alpha_{\infty}: H^k_2(\reg(V),h)\rightarrow \im(H^k_{s,\infty}(\reg(V),h)\rightarrow\overline{H}^k_{\infty,\max}(\reg(V),h))$ is injective. Now using the isomorphism $H^j_{s,\infty}(\reg(V),h)\cong I^{t}H^j(V,\mathbb{R})$ for each $j=0,...2v$, see \cite[Th. 1.2.2]{Val}, we can conclude that $$\dim(I^{\underline{m}}H^k(V,\mathbb{R}))\leq \dim(I^{t}H^k(V,\mathbb{R}))$$
and $$\dim(I^{\underline{m}}H^k(V,\mathbb{R}))\leq \dim(H^k(V,\mathbb{R}))$$ provided $V$ is normal. This shows the first two statements above. Concerning the third and the fourth one we know that \eqref{banagl} holds true for $V$ as well, see \cite[\S\ 6.1]{GoMac}. In this way we have $H^j_{2,\max}(\reg(V),g)\cong H_{2v-j}(V,\mathbb{R})$ for $2v-j>2v-1-[\frac{2v-1}{2}]$, that is $j<v$,  and  $H^{j}_{s,\infty}(\reg(V),h)\cong I^tH^j(V,\mathbb{R})\cong I^0H_{2v-j}(V,\mathbb{R})\cong H_{2v-j}(\reg(V),\mathbb{R})$ when $1<j\leq 2v$ while  $H^{1}_{s,\infty}(\reg(V),h)\cong I^0H_{2v-1}(V,\mathbb{R})\cong \im(H_{2v-1}(\reg(V),\mathbb{R})\rightarrow H_{2v-j}(V,\mathbb{R})$. Hence, keeping in mind that \eqref{cotto} holds true for $k\notin \{v\pm1,v\}$, we can conclude that $$\dim(H_{2v-k}(V,\mathbb{R}))\leq \dim(H_{2v-k}(\reg(V),\mathbb{R}))$$ if $1<k<v-1$ whereas if $k=1$  $$\dim(H_{2v-1}(V,\mathbb{R}))=\dim(\im(H_{2v-1}(\reg(V),\mathbb{R})\rightarrow H_{2v-1}(V,\mathbb{R}))).$$ Finally the last assertion follows using the first assertion of Prop. \ref{vanishing}.
\end{proof}

\begin{cor}
\label{ricciproj}
Let $V$ and $h$ be as above  with $v>1$. The following properties hold true
\begin{enumerate}
\item If $\mathrm{ric}^-\in \mathcal{K}(\reg(V))$ then $$\dim(I^{\underline{m}}H^1(V,\mathbb{R}))\leq \dim(I^{t}H^1(V,\mathbb{R})).$$ In particular if $V$ is normal then $$\dim(I^{\underline{m}}H^1(V,\mathbb{R}))\leq \dim(H^1(V,\mathbb{R})).$$
\item If $\mathrm{Ric}\geq 0$ and  $\mathrm{Ric}_p>0$ for some $p\in \reg(V)$, then $$I^{\underline{m}}H^1(V,\mathbb{R})=I^0H_{2v-1}(V,\mathbb{R})=\im(H_{2v-1}(\reg(V),\mathbb{R})\rightarrow H_{2v-1}(V,\mathbb{R}))=\{0\}.$$
\end{enumerate}
\end{cor}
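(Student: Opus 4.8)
The plan is to obtain both assertions by specializing Theorem \ref{projvar} to degree $k=1$. The only piece of information proper to degree one is the classical Weitzenb\"ock identity $\Delta_1=\nabla^t\nabla+\mathrm{Ric}$, which says that the zeroth order term $L_1$ is exactly the Ricci endomorphism; hence $\ell_1^-=\mathrm{ric}^-$, and the pointwise conditions $L_1\ge 0$, $L_{1,p}>0$ are nothing but $\mathrm{Ric}\ge 0$, $\mathrm{Ric}_p>0$. Since $v>1$, the index $k=1$ lies outside $\{v-1,v,v+1\}$ whenever $v\ge 3$, so Theorem \ref{projvar} applies verbatim; the only borderline case is $v=2$ (where $k=1=v-1$), which is covered once one knows that $\Delta_{1,\mathrm{abs}}=\Delta_1^{\mathcal F}$ on $(\reg(V),h)$ — this being the sole place the excluded middle degrees enter the proof of Theorem \ref{projvar}.

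For the first assertion, assuming $\mathrm{ric}^-\in\mathcal K(\reg(V))$ is the same as assuming $\ell_1^-\in\mathcal K(\reg(V))$, so Theorem \ref{projvar} gives directly $\dim(I^{\underline m}H^1(V,\mathbb R))\le\dim(I^{t}H^1(V,\mathbb R))$ as well as $\dim(I^{\underline m}H^1(V,\mathbb R))\le\dim(H^1(V,\mathbb R))$ when $V$ is normal (equivalently, by Theorem \ref{mischiozzo}, $I^{t}H^1(V,\mathbb R)\cong H^1(V,\mathbb R)$ in that case).

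For the second assertion, the hypotheses $\mathrm{Ric}\ge 0$ and $\mathrm{Ric}_p>0$ become $L_1\ge 0$ and $L_{1,p}>0$, so the final clause of Theorem \ref{projvar} — which rests on the Bochner-type argument of Prop.\ \ref{vanishing} — yields $I^{\underline m}H^1(V,\mathbb R)=\{0\}$. To upgrade this to the remaining two vanishings I would reuse the identifications already set up in the proof of Theorem \ref{projvar}: one has $I^{\underline m}H^1(V,\mathbb R)\cong H^1_{2,\max}(\reg(V),h)$, and since $1<v$, Poincar\'e duality combined with \eqref{banagl} for $V$ identifies $H^1_{2,\max}(\reg(V),h)$ with $H_{2v-1}(V,\mathbb R)$; hence $H_{2v-1}(V,\mathbb R)=\{0\}$. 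Consequently the image of $H_{2v-1}(\reg(V),\mathbb R)\to H_{2v-1}(V,\mathbb R)$ is trivial, and since \eqref{banagl} with the zero perversity identifies $I^0H_{2v-1}(V,\mathbb R)$ with exactly that image, it vanishes too.

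Beyond this the argument is pure bookkeeping on top of Theorem \ref{projvar}, so no real technical obstacle arises; the one point deserving genuine care is confirming that the degree-one statements lie within the scope of Theorem \ref{projvar} for every $v>1$, that is, disposing of the case $v=2$, which as noted reduces to the equality of the absolute and Friedrichs extensions of $\Delta_1$ on $(\reg(V),h)$.
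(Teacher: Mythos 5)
Your proposal is correct and takes essentially the same route as the paper: part 1 is Theorem \ref{projvar} specialized to $k=1$ via $L_1=\mathrm{Ric}$, and part 2 combines its vanishing clause with the identifications $I^{\underline{m}}H^1(V,\mathbb{R})\cong H_{2v-1}(V,\mathbb{R})$ and $I^0H_{2v-1}(V,\mathbb{R})\cong\im\bigl(H_{2v-1}(\reg(V),\mathbb{R})\rightarrow H_{2v-1}(V,\mathbb{R})\bigr)$ obtained from duality and \eqref{banagl}, which is exactly what the paper's one-line proof invokes. The borderline case $v=2$ (where $k=1=v-1$ falls outside the stated hypotheses of Theorem \ref{projvar}) that you flag is likewise left untreated in the paper's own proof, so your reduction of it to the equality $\Delta_{1,\mathrm{abs}}=\Delta_1^{\mathcal{F}}$ on $(\reg(V),h)$ is a careful observation rather than a gap relative to the paper's argument.
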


\begin{proof}
This follows immediately by Prop. \ref{vanishing}, Prop. \ref{projvar} and \eqref{banagl}.
\end{proof}

\begin{cor}
Let $V$ be as above. If $\dim(H_{2v-1}(V,\mathbb{R}))>\dim(\im(H_{2v-1}(\reg(V),\mathbb{R})\rightarrow \im(H_{2v-1}(V,\mathbb{R}))$ then there is no Riemannian metric $\rho$ on $\mathbb{C}\mathbb{P}^n$ with $\mathrm{ric}_{\sigma}^-\in \mathcal{K}(\reg(V))$, where $\sigma$ denotes the Riemannian metric induced by $\rho$ on $\reg(V)$. In particular there is no Riemannian metric on $\mathbb{C}\mathbb{P}^n$ that induces a Riemannian metric on $\reg(V)$ with Ricci curvature bounded from below.
\end{cor}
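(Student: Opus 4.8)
The plan is to argue by contradiction and to reduce everything to the case $k=1$ of Theorem \ref{projvar}. Suppose there were a Riemannian metric $\rho$ on $\mathbb{C}\mathbb{P}^n$ whose restriction $\sigma:=\rho|_{\reg(V)}$ satisfies $\mathrm{ric}_\sigma^-\in\mathcal{K}(\reg(V))$. Since $\mathbb{C}\mathbb{P}^n$ is compact, $\rho$ is quasi-isometric to the Fubini--Study metric, hence $\sigma$ is quasi-isometric to $h$ on $\reg(V)$; in particular $\sigma$ is quasi-isometric to an iterated conic metric, and near each of the finitely many singular points of $V$ it is quasi-isometric to an exact conic metric $dr^2+r^2g_L$ over a link $L$ of dimension $2v-1>2$. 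Thus $(\reg(V),\sigma)$ is again an incomplete Riemannian manifold of finite volume.

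First I would check that $(\reg(V),\sigma)$ satisfies all the hypotheses of Theorem \ref{projvar} for $k=1$. The $L^p$- and $L^\infty$-de Rham complexes, hence the (reduced and unreduced) $L^p$- and $L^\infty$-cohomology groups as well as the groups $H^\bullet_{s,\infty}$, depend only on the quasi-isometry class of the metric; so do the Sobolev embedding $W^{1,2}_0(\reg(V),\cdot)\hookrightarrow L^{\frac{2m}{m-2}}(\reg(V),\cdot)$ and the parabolicity. In particular the isomorphisms $H^\bullet_{2,\max}(\reg(V),\sigma)\cong I^{\underline m}H^\bullet(V,\mathbb{R})$ and $H^\bullet_{s,\infty}(\reg(V),\sigma)\cong I^tH^\bullet(V,\mathbb{R})$ of \cite{TOh}, \cite{Val} still hold and the corresponding images are still closed, since both sides are unchanged. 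The only point requiring an argument is the equality of the absolute and Friedrichs extensions of $\Delta_0$ and $\Delta_1$ for $\sigma$: in degree $0$ this is equivalent, by Proposition \ref{oneto}, to $d_{0,\max,2}=d_{0,\min,2}$, which follows from the (quasi-isometry invariant) parabolicity of $(\reg(V),\sigma)$; in degree $1$ it is a statement about the behaviour near the isolated singularities only, where $\sigma$ is quasi-isometric to an exact cone of dimension $2v>2$, so the argument of \cite[Th.~1.2]{GL} applies and yields $\Delta_{1,\mathrm{abs}}=\Delta_1^{\mathcal{F}}$ for $\sigma$ as well (recall $1\notin\{v,v\pm1\}$). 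Note that $\mathrm{ric}_\sigma^-=\ell_1^-$ for $\sigma$, so the remaining hypothesis $\ell_1^-\in\mathcal{K}(\reg(V))$ is exactly what we have assumed.

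Granting this, Theorem \ref{projvar} applied to $(\reg(V),\sigma)$ with $k=1$ gives
$$\dim(H_{2v-1}(V,\mathbb{R}))=\dim\bigl(\im(H_{2v-1}(\reg(V),\mathbb{R})\to H_{2v-1}(V,\mathbb{R}))\bigr)\le\dim(H_{2v-1}(\reg(V),\mathbb{R})),$$
which contradicts the standing hypothesis $\dim(H_{2v-1}(V,\mathbb{R}))>\dim(H_{2v-1}(\reg(V),\mathbb{R}))$. This proves the first assertion. For the last sentence, if a Riemannian metric $\rho$ on $\mathbb{C}\mathbb{P}^n$ restricts to a metric $\sigma$ on $\reg(V)$ with $\mathrm{Ric}_\sigma\ge c$ for some $c\in\mathbb{R}$, then $0\le\mathrm{ric}_\sigma^-\le|c|$, so $\mathrm{ric}_\sigma^-\in L^\infty(\reg(V))\subset\mathcal{K}(\reg(V))$ by \eqref{bKato}, and the first part applies. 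I expect the delicate step to be the transfer of the identity $\Delta_{1,\mathrm{abs}}=\Delta_1^{\mathcal{F}}$ to the (in general non-K\"ahler) metric $\sigma$: unlike the $L^p$-cohomological and Sobolev hypotheses it is not a formal consequence of quasi-isometry, and one must genuinely invoke that the local model near each singularity is a metric cone of dimension $>2$.
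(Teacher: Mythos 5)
Your overall route is the same as the paper's: argue by contradiction, observe that $\sigma$ is quasi-isometric to $h$ because any two smooth metrics on the compact manifold $\mathbb{C}\mathbb{P}^n$ are comparable, transfer the quasi-isometry--invariant ingredients (finite volume, the Sobolev embedding, finite dimensionality and closedness of ranges, the identifications of $H^{\bullet}_{2,\max}$ and $H^{\bullet}_{s,\infty}$ with intersection cohomology) from $h$ to $\sigma$, apply the $k=1$ case of Th.~\ref{projvar} to $(\reg(V),\sigma)$ to get $\dim(H_{2v-1}(V,\mathbb{R}))=\dim\bigl(\im(H_{2v-1}(\reg(V),\mathbb{R})\to H_{2v-1}(V,\mathbb{R}))\bigr)\leq\dim(H_{2v-1}(\reg(V),\mathbb{R}))$, contradicting the hypothesis, and settle the last sentence via $0\leq\mathrm{ric}^-_{\sigma}\leq|c|$ and \eqref{bKato}. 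All of this is exactly what the paper's (very terse) proof intends. (A side remark: for the $k=1$ case of Th.~\ref{projvar} only the equality $\Delta_{1,\mathrm{abs}}=\Delta_{1}^{\mathcal{F}}$ is needed, since that case rests on Th.~\ref{tozziti} alone; your degree-$0$ discussion is superfluous, though harmless.)

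The genuine gap is precisely at the step you flag as delicate, and your fix for it does not work. First, it is false in general that the metric induced on $\reg(V)$ near an isolated singularity is quasi-isometric to an exact cone $dr^2+r^2g_L$ over the link: already for normal surface singularities the correct quasi-isometry models are degenerate (iterated) cones with different scaling rates in different directions, see \cite{HS}, and this failure is exactly why the results of \cite{GL} and \cite{BLE} require work; so "near each singular point $\sigma$ is quasi-isometric to an exact conic metric" cannot be used. Second, even granting such a quasi-isometry, you cannot conclude that "the argument of \cite[Th.~1.2]{GL} applies" to $\sigma$: the equality $\Delta_{1,\mathrm{abs}}=\Delta_{1}^{\mathcal{F}}$ involves $d^t$ and the Friedrichs form, which change with the metric, so---unlike the $L^p$-cohomological, Sobolev and parabolicity hypotheses---it is not inherited under quasi-isometry, and the cone-type results (Cheeger, Br\"uning--Lesch) require the metric to be genuinely (conformally) conic, not merely quasi-isometric to a cone. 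The way this input is actually obtained, and what the paper implicitly does, is to use that $\sigma$ is itself a metric induced on $\reg(V)$ by a smooth Riemannian metric on $\mathbb{C}\mathbb{P}^n$, so that \cite[Th.~1.2]{GL} furnishes $\Delta_{1,\mathrm{abs}}=\Delta_{1}^{\mathcal{F}}$ for $\sigma$ exactly as it did for $h$ (with $1\notin\{v,v\pm1\}$), the quasi-isometry with the Fubini--Study metric being invoked only for the cohomological identifications; in your write-up this step is replaced by an incorrect local-structure claim and an illegitimate quasi-isometry transfer, so it needs to be redone along these lines.
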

\begin{proof}
This follows immediately by Prop. \ref{projvar}, the fact that $\rho$ is quasi-isometric to the Fubini-Study metric and that the $L^2$-cohomology is stable through quasi-isometries.
\end{proof}

Using the above results we construct now some examples of singular projective varieties with only isolated singularities and such that $\mathrm{ric}^-\notin \mathcal{K}(\reg(V))$. Let $N\subset \mathbb{C}\mathbb{P}^{s}$ be a smooth projective variety of complex dimension $n>1$. Assume that $\dim(H^1(N,\mathbb{R}))\geq 1$. Let $V$ be the projective cone of $N$. We recall that $V$ is the Zariski closure of $C(N)$ in $\mathbb{C}\mathbb{P}^{s+1}$ and $C(N)$, the affine cone over $N$, is defined as $C(N)=\{0,...,0\}\cup \theta^{-1}(N)$ with $\theta:\mathbb{C}^{s+1}\setminus \{0,...,0\}\rightarrow \mathbb{C}\mathbb{P}^s$ the map that sends the point with affine coordinates $(a_0,...,a_s)$ to the point with homogeneous coordinates $[a_0:...:a_s]$, see\cite[p.12]{RoHa}. Then it is easy to check that $V$ has only one isolated singularity $\sing(V)=\{[0:...:0:1]\}$. 
Let $h$ be the metric on $\reg(V)$ induced by the Fubini-Study metric of $\mathbb{C}\mathbb{P}^{s+1}$. We want to show that $\mathrm{ric}^-\notin \mathcal{K}(\reg(V))$. To this aim we use \eqref{cotto} and we prove that $\dim(I^{\underline{m}}H^1(V,\mathbb{R}))>\dim(I^tH^1(V,\mathbb{R})).$ Concerning $I^{\underline{m}}H^1(V,\mathbb{R})$ we have $\dim(I^{\underline{m}}H^1(V,\mathbb{R}))=\dim(H^1(\reg(V),\mathbb{R}))$ $=\dim(H^{1}(N,\mathbb{R}))\geq 1$ as $N$ and $\reg(V)$ are homotopically equivalent. For the other cohomology group, using \cite[\S\ 6.1]{GoMac} we have 
\begin{align}
& \nonumber \dim(I^tH^1(V,\mathbb{R}))=\dim(\im(H^1_c(\reg(V),\mathbb{R})\rightarrow H^1(\reg(V),\mathbb{R})))\leq\dim(H_c^1(\reg(V),\mathbb{R}))=\\
& \nonumber \dim(H_{2n+1}(\reg(V),\mathbb{R}))=\dim(H_{2n+1}(N,\mathbb{R}))=0.
\end{align}
 Note that the equalities in the second line above follow  by the fact that $N$ and $\reg(V)$ are homotopically equivalent and that the real dimension of $N$ is $2n$. We can thus conclude that  $\mathrm{ric}^-\notin \mathcal{K}(\reg(V))$ as desired.

\end{document}